\DeclareMathOperator{\EXP}{Exp}
\DeclareMathOperator{\Diff}{Diff} \DeclareMathOperator{\Real}{Re}
\DeclareMathOperator{\Imag}{Im}
\DeclareMathOperator{\id}{id}
\DeclareMathOperator{\spec}{spec}
\def\N{\mathbb N}
\def\R{\mathbb R}
\def\C{\mathbb C}
\def\Z{\mathbb Z}
\def\P{\mathbb P}
\def\G{\Gamma}
\def\g{\gamma}
\def\Cd#1{(\C^#1,0)}
\def\yy#1{\mathrm{y}_#1}
\def\xx#1{\mathrm{x}_#1}
\def\mc#1{{\mathcal #1}}
\newcommand{\wt}[1]{{\widetilde{#1}}}
\newcommand{\parc}[1]{\dfrac{\partial}{\partial#1}}
\theoremstyle{plain}
\newtheorem{theorem}{Theorem}[section]
\newtheorem{proposition}[theorem]{Proposition}
\newtheorem{lemma}[theorem]{Lemma}
\newtheorem{cor}[theorem]{Corollary}
\theoremstyle{definition}
\newtheorem{definition}[theorem]{Definition}
\newtheorem{remark}[theorem]{Remark}
\theoremstyle{plain}
\newtheorem{Theorem}{Theorem}
\theoremstyle{definition}
\newtheorem{Remark}{Remark}
\newcommand{\obra}[3]{{\sc #1} {\em #2}. {#3}.}
\author[L. L\'opez-Hernanz]{Lorena L\'opez-Hernanz}
\address{Lorena L\'opez-Hernanz\\ Departamento de F\'isica y Matem\'{a}ticas\\
Universidad de Alcal\'a\\ Edificio de Ciencias, Campus universitario, Carretera Madrid-Barcelona, Km. 33600\\ 28871 Alcal\'a de Henares, Madrid,
Spain} \email{lorena.lopezh@uah.es}
\author[J. Raissy]{Jasmin Raissy}
\address{Jasmin Raissy\\ Institut de Math\'ematiques de Toulouse, UMR5219\\ Universit\'e de Toulouse, CNRS\\ UPS IMT, 118 route de Narbonne, F-31062 Toulouse Cedex 9\\ France}
\email{jraissy@math.univ-toulouse.fr}
\author[J. Rib\'on]{Javier Rib\'on}
\address{Javier Rib\'on\\ Instituto de Matem\'{a}tica e Estat\'istica\\Universidade Federal Fluminense\\ Campus do Gragoat\'a\\ Rua Marcos Valdemar de Freitas Reis s/n, 24210-201\\Niter\'oi, Rio de Janeiro, Brazil}
\email{javier@mat.uff.br}
\author[F. Sanz-S\'anchez]{Fernando Sanz-S\'anchez}
\address{Fernando Sanz-S\'anchez\\ Departamento de \'{A}lgebra, An\'{a}lisis Matem\'{a}tico, Geometr\'{\i}a y Topolog\'{\i}a\\ Facultad de Ciencias\\
Universidad de Valladolid\\ Campus Miguel Delibes, Paseo de Bel\'en,7\\ 47011 Valladolid, Spain} \email{fsanz@agt.uva.es}
\thanks{First, third and fourth authors partially supported by Ministerio de Econom\'ia y Competitividad, Spain, process MTM2016-77642-C2-1-P; first and second authors, by MATHAmSud 2014 grant ``Geometry and Dynamics of Holomorphic Foliations''; second author, by ANR project LAMBDA, ANR-13-BS01-0002.}
\date{}
\title[Stable manifolds asymptotic to formal curves]
{Stable manifolds of two-dimensional biholomorphisms asymptotic to formal curves}
\begin{document}
\begin{abstract}
Let $F\in\Diff\Cd 2$ be a germ of a holomorphic diffeomorphism and let $\G$ be an invariant formal curve of $F$. Assume that the restricted diffeomorphism $F|_\G$ is either hyperbolic attracting or rationally neutral non-periodic (these are the conditions that the diffeomorphism $F|_\G$ should satisfy, if $\G$ were convergent, in order to have orbits converging to the origin). Then we prove that $F$ has finitely many stable manifolds, either open domains or parabolic curves, consisting of and containing all converging orbits asymptotic to $\G$. Our results generalize to the case where $\G$ is a formal periodic curve of $F$.
\end{abstract}
\maketitle
\section{Introduction}

Let $F\in\Diff\Cd n$ be a germ of a holomorphic diffeomorphism. A {\em stable set} of $F$ is a subset $B\subset V$ of an open neighborhood $V$ of $0$ where $F$ is defined, which is invariant, i.e. $F(B)\subset B$, and such that the orbit of each point of $B$ converges to $0$. If $B$ is an analytic, locally closed submanifold of $V$ then we say that $B$ is a {\em stable manifold} of $F$ (in $V$).

In the case of one-dimensional diffeomorphisms, the existence of stable manifolds depends mainly on the multiplier $\lambda=F'(0)\in\C$. More precisely, $F$ has non-trivial stable manifolds when $F$ is
{\em (hyperbolic) attracting} ($|\lambda|<1$), in which case a whole neighborhood of $0\in\C$ is a stable manifold, or {\em rationally neutral} ($\lambda$ is a root of unity)
and non-periodic, in which case the ``attracting petals'' of Leau-Fatou Flower Theorem \cite{Lea, Fat} are stable manifolds. In the remaining cases, {\em (hyperbolic) repelling} ($|\lambda|>1$), periodic or {\em irrationally neutral} ($|\lambda|=1$ and $\lambda$ is not a root of unity), the origin itself is the only stable manifold of $F$ in any neighborhood (a result by P\'{e}rez-Marco
\cite{Per} in the last case).

In the two-dimensional case, the problem of the existence of stable manifolds of $F$ has been addressed by several authors. The existence of one-dimensional stable manifolds, usually called {\em parabolic curves} (when they do not contain the origin), has been studied for example by Ueda \cite{Ued2} when $F$ is semihyperbolic; by \'{E}calle \cite{Eca}, Hakim \cite{Hak}, Abate \cite{Aba}, Abate, Bracci and Tovena \cite{Aba-B-T}, Molino \cite{Mol}, Brochero, Cano and L\'{o}pez \cite{Bro-C-L} and L\'{o}pez and Sanz \cite{Lop-S} when $F$ is tangent to the identity; by Bracci and Molino \cite{Bra-M} when $F$ is quasi-parabolic. The existence of open stable manifolds has been treated for example by Ueda \cite{Ued1} in the semihyperbolic case; by Weickert \cite{Wei}, Hakim \cite{Hak}, Vivas \cite{Viv}, Rong \cite{Ron} in the tangent to the identity case.

In this paper, we study the case of a planar diffeomorphism $F\in\Diff\Cd 2$ and we look for stable manifolds consisting of orbits which are asymptotic to a given invariant formal curve $\G$. Going one step further, our interest is to describe a family of such stable manifolds whose union ``captures'' any orbit asymptotic to $\G$. Following the terminology adopted by Ueda in \cite{Ued1}, we construct a ``base of the set of orbits asymptotic to $\G$'' which is a union of stable manifolds. Our assumptions in order to guarantee the existence of such stable manifolds are just the necessary conditions inherited from the one-dimensional dynamics induced by $F$ on $\G$. No further hypotheses on the linear part $DF(0)$ are required.

Let us describe our main result in more precise terms. At the end of the introduction we discuss its relation with some of the results which appear in the references mentioned above.

\strut

Recall that a formal curve $\G$  at $0\in\C^2$ is a reduced principal ideal of $\C[[x,y]]$. It is called irreducible if $\G$ is a prime ideal.
We say that $\G$ is {\em invariant} by $F$, or $F$-invariant, if $\G \circ F = \G$.
If $\G$ is irreducible and $F$-invariant then we can consider the {\em restriction} $F|_\G$,
which is a formal diffeomorphism in one variable (see Section 2).

A formal irreducible curve $\G_0$ is called $m$-{\it periodic} if $\G_0 \circ F^{m} = \G_0$
and $m$ is the minimum positive integer holding such property. In that case, the formal curve
\[ \Gamma= \bigcap_{j=0}^{m-1} \Gamma_0 \circ F^{j} \]
is $F$-invariant. Let us point out that if $\G_0$ defines an analytic curve $V(\G_0)$ then
$V (\G) = \cup_{j=0}^{m-1} F^{j}(V(\G_0))$. Thus $V(\G)$ is the minimal $F$-invariant curve
containing $V(\G_0)$. Equivalently, $\G$ is the maximal $F$-invariant ideal contained in $\G_0$, being this conclusion also valid in the formal setting.
We say that $\G$ is the {\it invariant curve associated to} $\G_0$.
In this case, the irreducible components of $\G$ are the $m$-periodic curves
$\G_j := \G_0 \circ F^{j}$ for $j = 0, ...,m-1$.

Given a $m$-periodic curve $\G_0$ of $F$, a non-trivial orbit $O$ of $F$ is said to be {\em asymptotic} to
the associated invariant curve $\G$ if it converges to the origin and, for any finite composition of blow-ups of points
$\sigma : M \to {\mathbb C}^{2}$, the $\omega$-limit of the lifted sequence $\sigma^{-1}(O)$
is contained in the finite set determined by the components of $\G$ in the exceptional divisor $\sigma^{-1} (0)$ (see Section ~\ref{sec:blow-ups} for details).

Our main result is the following:

\begin{Theorem}\label{th:main}
Consider $F\in\Diff\Cd 2$ and let $\G_0$ be a formal $m$-periodic curve of $F$ whose associated invariant
curve is denoted by $\G$.  Assume that the restriction
$F^{m}|_{\G_0}$
is either attracting or rationally neutral and non-periodic.
Then, in any sufficiently small open neighborhood $V$ of $0$, there exists a non-empty finite family of pairwise disjoint stable manifolds $S_1,...,S_r\subset V$ of $F$ of pure positive dimension and with finitely many connected components such that the orbit of every point in $S_j$ is asymptotic to $\G$ and such that any orbit of $F$ asymptotic to $\G$ is eventually contained in
$S_1\cup\dots\cup S_r$.
\end{Theorem}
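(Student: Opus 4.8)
The plan is to reduce the statement, in stages, to a local normal-form analysis at a fixed point lying over the origin after blowing up, and there to construct the stable manifolds by hand.

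\emph{Reduction to the invariant irreducible case.} First I would replace $F$ by $G:=F^m$, for which $\G_0$ is an \emph{invariant} irreducible curve with $G|_{\G_0}=F^m|_{\G_0}$ attracting or rationally neutral non-periodic. Any orbit of $F$ asymptotic to $\G$ splits into $m$ interlaced $G$-orbits, each asymptotic to one component $\G_j=\G_0\circ F^{j}$; since $F$ conjugates the local dynamics near $\G_j$ to that near $\G_{j+1}$, it suffices to produce the finite family of $G$-stable manifolds asymptotic to $\G_0$ and then spread it around the cycle by the iterates $F^{j}$, $0\le j\le m-1$. Setting $S:=\bigcup_{j=0}^{m-1}F^{j}(S^{G})$ for a $G$-invariant piece $S^{G}$ gives $F(S)\subset S$ because $F^{m}(S^{G})=G(S^{G})\subset S^{G}$, so the resulting sets are genuine $F$-stable manifolds. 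Thus I reduce to $m=1$: $\G$ irreducible and $F$-invariant, with $F|_\G$ attracting or rationally neutral non-periodic.

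\emph{Reduction by blow-ups.} Using reduction of singularities of $\G$, I would take a finite composition of punctual blow-ups $\sigma\colon M\to(\C^{2},0)$ such that the strict transform $\wt{\G}$ is smooth and transverse to the exceptional divisor $D=\sigma^{-1}(0)$ at a single point $p$, which is fixed by $\wt{F}=\sigma^{-1}\circ F\circ\sigma$ (the relevant centers are $F$-fixed because $\G$ is invariant) and where $\wt{F}$ preserves both $\wt{\G}$ and $D$. Since $\sigma$ is a biholomorphism off $D$, and, by the very definition of asymptotic orbit through all blow-ups, such orbits lift to orbits converging to $p$ tangentially to $\wt{\G}$, the conclusion for $(\wt{F},\wt{\G})$ at $p$ transfers verbatim to $(F,\G)$: stable manifolds push forward under $\sigma$, and a $\G$-asymptotic orbit is eventually captured iff its lift is. Hence I may assume $\G$ smooth and transverse to $D$ at the fixed point $p$, and work in local coordinates $(x,y)$ with $D=\{x=0\}$, $\G$ tangent to $\{y=0\}$, the multiplier of $F|_\G$ being the eigenvalue $\lambda$ of $DF(p)$ along the $x$-axis and the eigenvalue along $D$ being some $\nu$.

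\emph{Local construction.} The heart of the proof is the model at $p$. I would straighten $\G$ to $\{y=0\}$ by a formal coordinate change, replace it by a high-order polynomial truncation, and record that the resulting analytic map differs from its prepared normal form by terms vanishing to high order along $\{y=0\}$ (this uses that $F$ preserves $\G$ to all orders). In the prepared coordinates the $x$-dynamics reproduces $F|_\G$ with multiplier $\lambda$, while the transverse $y$-dynamics is governed by $\nu$ together with higher-order coupling. Whether the orbits asymptotic to $\G$ fill an open set or only a curve is dictated by the position of $\nu$ relative to $\lambda$: when the transverse direction contracts faster than the motion along $\G$ (so generic nearby orbits become tangent to $\G$) I would produce an \emph{open} stable manifold as an $F$-invariant horn-shaped neighborhood of $\{y=0\}$, built by a contraction/majorant argument in adapted norms; otherwise the asymptotic orbits are confined to lower-dimensional sets and I would produce stable \emph{parabolic curves}. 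In the attracting case ($|\lambda|<1$) the estimates are of hyperbolic type, whereas in the rationally neutral non-periodic case I would invoke the Leau--Fatou flower of $F|_\G$, whose finitely many attracting petals, determined by the order and iterative residue of the parabolic part, lift through Fatou-coordinate estimates on the prepared map. In every case the number of pieces and of connected components is finite, read off from the one-dimensional data and the finite combinatorics of the resolution.

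\emph{Completeness and the main obstacle.} Finally I would prove that every orbit asymptotic to $\G$ is eventually contained in $S_1\cup\dots\cup S_r$. In the prepared coordinates such an orbit has $x_n\to0$ following the one-dimensional dynamics of $F|_\G$, while the asymptotic hypothesis (through all blow-ups the $\omega$-limit sits at $p$) constrains the transverse coordinate $y_n$; one must show this forces the orbit, after finitely many steps, into the region where the transverse contraction (attracting case) or the Fatou-coordinate flow (parabolic case) traps it inside one of the constructed manifolds. I expect the main obstacle to be precisely this capturing statement together with the treatment of an \emph{arbitrary} transverse eigenvalue $\nu$: when $\nu$ is neutral or resonant with $\lambda$ the coordinate $y_n$ need not decay, so trapping requires sharp estimates on the coupled dynamics and a careful design of the invariant domains and petals rather than a soft contraction. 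Controlling, along the possibly non-convergent curve $\G$, the discrepancy between the merely formal normal form and the genuine analytic map is the recurring technical difficulty underlying all the steps above.
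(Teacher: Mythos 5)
Your overall skeleton (reduce to the invariant irreducible case via $F^m$, blow up along $\G$, construct local stable manifolds at the resulting fixed point, then prove the capturing statement) matches the paper's strategy, but the criterion you propose for deciding whether the asymptotic orbits fill an open set or only a curve is wrong, and this is the heart of the theorem. You claim the dichotomy is ``dictated by the position of $\nu$ relative to $\lambda$'', i.e.\ by the linear spectrum at the blown-up fixed point. This fails in both regimes. In the hyperbolic attracting case the paper proves, via the Stable Manifold Theorem and Poincar\'e--Dulac normal forms, that $\G$ is in fact an \emph{analytic} curve and that \emph{every} orbit asymptotic to $\G$ is eventually contained in $\G$ (Propositions \ref{pro:anahyp} and \ref{pro:hypsad}); so even when the linear part looks like a node, $|\nu|<|\lambda|<1$, your ``horn-shaped'' open stable manifold cannot consist of orbits asymptotic to $\G$. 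Nearby orbits do converge tangentially to $\G$, but asymptoticity must survive \emph{every} blow-up, and blowing up disrupts the eigenvalue hierarchy (the transverse eigenvalue becomes $\nu/\lambda^{j}$, eventually of modulus greater than $|\lambda|$) --- this is exactly the warning in Remark \ref{rem:nasm2}. In this case the correct family is simply $\{\G\setminus\{0\}\}$, a one-dimensional analytic set, which your construction would miss.

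In the rationally neutral case the dichotomy is likewise not read off from the spectrum. After reduction to the form \eqref{eq:RS-form}, the classification is made \emph{per attracting direction} $\ell=\xi\R^+$, $\xi^{k+p}=1$, and is governed by the lexicographic sign of $\left(\ln|\mu|,\Real(\xi^{k}A_0),\dots,\Real(\xi^{k+p-1}A_{p-1})\right)$, where the $A_j$ are the coefficients of the infinitesimal principal part --- higher-order data invisible to $DF(p)$. When $|\mu|=1$ the same diffeomorphism typically has saddle directions (yielding parabolic curves) and node directions (yielding open stable manifolds) simultaneously, so no single comparison of eigenvalues can determine the dimensions of the pieces. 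Moreover, all the actual work --- the Hakim-type contraction on a Banach space of functions over the distorted sectors $R_{d,e,\varepsilon}$ for saddle directions, the invariant domains together with the integrating factor $E(x)=\exp\left(-\int A(x)x^{-p-1}\,dx\right)$ for node directions, and both capturing statements --- hinges on this direction-dependent classification, which your proposal never identifies; acknowledging the ``main obstacle'' of neutral or resonant $\nu$ is not a substitute for the mechanism (the node/saddle dichotomy and the associated estimates) that resolves it.
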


It is worth mentioning that a diffeomorphism $F\in\Diff\Cd 2$ always has a
formal periodic curve by a result of Rib\'{o}n \cite{Rib}, although they may be all divergent and
non-invariant.

\begin{Remark}\label{rk:periodic-to-invariant}
In order to show Theorem \ref{th:main} it suffices to consider irreducible invariant curves,
i.e. $m = 1$.
Indeed, assume that $\G_0$ is $m$-periodic and apply the theorem to $F^{m}$ and the $F^m$-invariant irreducible curve $\G_0$.
Let $\mathcal{F}_{0}=\{S_1, ..., S_r \}$ be a family of stable manifolds of $F^{m}$
obtained for a domain $V$ in which every $F^{j}$, for $j = 1, ...,m - 1$, is defined and injective, and put
 $\mathcal{F}=\{\cup_{j=0}^{m-1}F^{j}(S_1),\ldots,\cup_{j=0}^{m-1}F^{j}(S_r)\}$. Then $\mathcal{F}$ is a family with the required properties of Theorem~\ref{th:main} for $F$ and the invariant curve $\G$. Notice that, since each component of $\G$ is invariant by $F^m$, the points determined by $\G$ in the exceptional divisor after blow-ups are fixed points for the corresponding transform of $F^m$ (see Section ~\ref{sec:blow-ups}). Thus, an orbit $O=\{F^n(p)\}_{n\geq 0}$ of $F$ is asymptotic to $\G$ if and only if each one of the $m$ orbits $O_j=\{F^{nm+j}(p)\}_{n\geq 0}$ of $F^m$ for $j=0,...,m-1$ is asymptotic to one and only one of the components of $\G$. Hence, the orbit under $F^m$ of a point in $F^j(S_i)$ is asymptotic to $\G_j=F^j(\G_0)$ for any $j=0,...,m-1$ and any $i=1,...,r$ and thus $F^j(S_i)\cap F^{k}(S_{l})=\emptyset$ whenever $i\neq l$ and $j,k\in\{0,\dots, m-1\}$.
\end{Remark}

As a consequence of Remark~\ref{rk:periodic-to-invariant}, we assume from now on that all formal irreducible periodic curves are
invariant.

Roughly speaking, Theorem~\ref{th:main} can be interpreted by saying that the condition ensuring the existence of stable manifolds in dimension $1$ also provides
(applied to $F|_\G$) stable manifolds of orbits asymptotic to $\G$. More precisely, if $\G$ were convergent, the hypotheses in Theorem~\ref{th:main} would be necessary conditions in order to have stable orbits {\em inside} $\G$.
Although these hypotheses are not necessary in general, if they are not satisfied then
one can find simple examples where no orbit asymptotic to $\G$ exists.
In the case where $F|_\G$ is hyperbolic, being attracting is a necessary condition for having orbits asymptotic to $\G$ (see Section~\ref{sec:hyperbolic}). In the case where $F|_\G$ is periodic (and hence rationally neutral), since the set of fixed points of a diffeomorphism is an analytic set, either $F$ is itself periodic or $\G$ is convergent. In the first case, there are no non-trivial orbits converging to the origin; in the second case, there are examples with no asymptotic orbits (for instance $F(x,y)=(-x,2y)$ and $\G=(y)$) and examples with asymptotic orbits (for instance $F=\mbox{Exp}(y(x^2\partial/\partial x+y\partial/\partial y))$ and $\G=(y)$).
In the case where $F|_\G$ is irrationally neutral, although we can also find simple linear examples with no asymptotic orbits, we do not know if there are examples with asymptotic orbits.

\strut

In the proof of Theorem  \ref{th:main}, we consider separately the two situations for $F|_\G$,
namely hyperbolic or rationally neutral, since the arguments and the structure of the stable manifolds $S_j$ are
notably different in both cases.

In Section~\ref{sec:hyperbolic} we study the case where $F|_\G$ is hyperbolic attracting. The result is a consequence of the classical Stable Manifold and Hartman-Grobman
Theorems for diffeomorphisms. We show that $\Gamma$ is an analytic curve which contains eventually any orbit of $F$ which is asymptotic to $\Gamma$.
Indeed the hyperbolic case can be characterized in terms of the family
of stable manifolds ${\mathcal F}=\{S_1, \dots, S_r\}$
provided by Theorem ~\ref{th:main} in the following way: $F|_\G$ is hyperbolic if and only if $\overline{S}_j$ is a germ of analytic curve at $0$ for some $1 \leq j \leq r$ and in this case ${\mathcal F} = \{\G \setminus \{0\} \}$.
We also prove that
$\Gamma$ is either non-singular or a cusp $y^{p} = x^{q}$ in some coordinates and that, in this last case, $F$ is analytically linearizable.

The case where $F|_\G$ is rationally neutral is more involved and is treated in Sections~\ref{sec:reduction}, \ref{sec:saddle-direction}, \ref{sec:node-direction} and \ref{sec:conclusion}.
Observe first that, considering an iterate of $F$ and using similar arguments to the ones in Remark~\ref{rk:periodic-to-invariant}, we may assume that $F|_\G$ is a {\em parabolic} formal diffeomorphism, i.e. $(F|_\G)'(0)=1$.

In Section~\ref{sec:reduction}, we show that, after finitely many blow-ups along $\G$,
we can consider analytic coordinates  $(x,y)$ at the origin such that $\G$ is non-singular and tangent to the $x$-axis and $F$ is of the form
\begin{equation}\label{eq:RS-form}
\begin{array}{l}
x\circ F(x,y)=x-x^{k+p+1}+O(x^{k+p+1}y,x^{2k+2p+1})\\
y\circ F(x,y)= \mu(y+x^ka(x)y+O(x^{k+p+1}y,x^{k+p+2}))
\end{array}
\end{equation}
where $k\geq 1$, $p\geq 0$ and $a(x)$ is a polynomial of degree at most $p$ with $a(0)\neq 0$. Notice that $k$ and $p$ depend only on $F$ and $\G$, since $k+1$ is the order of $F$ and $k+p+1$ is the order of the restriction $F|_{\G}$.

Let $A(x)=A_0+A_1x+\cdots+A_px^p$ be the polynomial defined by the formula
$$\log\mu+x^k\left(A_0+A_1x+\cdots+A_px^p\right)=J_{k+p}\left(\log\left(\mu\left(1+x^ka(x)\right)\right)\right),$$
where $J_m$ denotes the truncation of a series up to degree $m$. The idea behind this definition is that the
jets of order $k+p+1$ of $F$
and of the exponential of the vector field
$$
Z=-x^{k+p+1}\frac{\partial}{\partial x}+(\log\mu+x^kA(x))y\frac{\partial}{\partial y}
$$
coincide, and the dynamics of $F$ and $\EXP(Z)$
are somewhat related.
Let us describe briefly the behavior of the orbits of the toy model $\EXP(Z)$
converging to the origin and asymptotic to the invariant curve $y=0$, which plays the role of $\G$. Given such an orbit $O=\{(x_n,y_n)\}$, the sequence $\{x_n\}$ is an orbit of the one-dimensional parabolic diffeomorphism $x\mapsto\EXP(-x^{k+p+1}\frac{\partial}{\partial x})$ and hence it converges to $0\in\C$ along a well defined real limit direction, necessarily one of the $k+p$ half-lines $\xi\R^+$ with $\xi^{k+p}=1$, called the {\em attracting directions} (they correspond to the central directions of the attracting petals in Leau-Fatou Flower Theorem). On the other hand, $Z$ has a first integral $H(x,y)=yh(x)$, where
$$
h(x)=\exp\left(\int \frac{\log\mu+x^kA(x)}{x^{k+p+1}}dx\right),
$$
and the behavior of the orbits of $\EXP(Z)$, since they are contained in fibers of $H$, depends on the asymptotics of $H$ in a neighborhood of the corresponding attracting direction $\ell$. Making a linear change of variables so that $\ell=\R^+$, we say that $\ell$ is a {\em node} direction if
$\left(\ln|\mu|,\Real\left(A_0\right),...,\Real\left(A_{p-1}\right)\right)<0$
in the lexicographic order. Otherwise, we say that $\ell$ is a {\em saddle} direction.

Consider the simplest case where $|\mu|\neq 1$ (i.e. $F$ is {\em semi-hyperbolic}). Then $\ell$ is a saddle or a node direction if $|\mu|>1$ or $|\mu|<1$, respectively. There exists a sector $\Omega\subset\C$ bisected by $\ell$ in which either $h(x)$ or $1/h(x)$ is a flat function depending on whether $\ell$ is a saddle or a node direction, respectively. Thus, the fibers of $H$ in $\Omega\times\C$ behave correspondingly as a saddle (only $y=0$ is bounded) or a node (any fiber is bounded and asymptotic to $y=0$). In the general case, one can show a similar description for the fibers of $H$ in $\Omega\times\C$, where $\Omega$ is a domain
of $\C$ containing $\ell$ which is not necessarily a sector. Moreover, $\Omega\times\C$ eventually contains any orbit $\{(x_n,y_n)\}$ of $\EXP(Z)$ such that $\{x_n\}$ has $\ell$ as a limit direction. We obtain that $\Omega\times\C$ (respectively $\Omega\times\{0\}$) is a stable manifold of $\EXP(Z)$ when $\ell$ is a node direction (respectively saddle direction) composed of orbits asymptotic to the curve $y=0$. The family of these stable manifolds satisfies the conclusions of Theorem~\ref{th:main}.

For a general diffeomorphism $F$ written in the reduced form \eqref{eq:RS-form}, we obtain a similar description of the orbits asymptotic to $\G$. In fact, we construct a family $\{S_\ell\}$ of stable manifolds of $F$, where $\ell$ varies in the set of attracting directions $\ell=\xi\R^+$, with $\xi^{k+p}=1$, satisfying the assertion of Theorem~\ref{th:main}. The case of a saddle direction is treated in Section~\ref{sec:saddle-direction}, where we obtain that $S_\ell$ is one-dimensional and simply connected (a so-called parabolic curve). The case of a node direction is studied in Section~\ref{sec:node-direction}, where we obtain that $S_\ell$ is a simply connected open set.

As a consequence of our main result, in Section~\ref{sec:conclusion} we prove the following theorem, which generalizes results in \cite{Bra-M} and \cite{Lop-S}.
\begin{Theorem}\label{th:generalizing-Lopez-Sanz}
Let $\G$ be an irreducible formal invariant curve of $F\in\Diff\Cd 2$ such that $F|_\G$ is parabolic, with $F|_{\G}\neq\id$, and assume that $\spec(DF(0))=\{1,\mu\}$, with $|\mu|\geq 1$. Then there exists a parabolic curve for $F$, which is asymptotic to $\G$.
\end{Theorem}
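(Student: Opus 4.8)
The plan is to derive the statement from Theorem~\ref{th:main} together with the explicit description of the stable manifolds $S_\ell$ in the rationally neutral case; the only genuinely new work will be to \emph{locate a saddle direction}.

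First I would check that the hypotheses of Theorem~\ref{th:main} are met with $m=1$. Since $F|_\G$ is parabolic we have $(F|_\G)'(0)=1$, so $F|_\G$ is rationally neutral; and since $F|_\G\neq\id$, writing $F|_\G(t)=t+ct^{q}+\cdots$ with $c\neq0$ and $q\geq2$ gives $(F|_\G)^{\circ n}(t)=t+nc\,t^{q}+\cdots\neq t$ for every $n\geq1$, so $F|_\G$ is non-periodic. Theorem~\ref{th:main} then provides a non-empty family of stable manifolds asymptotic to $\G$, and it remains only to exhibit a parabolic curve among them. Next I would pass to the reduced setting of Section~\ref{sec:reduction}, so that $\G=(y)$, $F$ has the form \eqref{eq:RS-form}, and the family is $\{S_\ell\}_\ell$ indexed by the $k+p$ attracting directions $\ell=\xi\R^+$, $\xi^{k+p}=1$, with $S_\ell$ a parabolic curve exactly when $\ell$ is a saddle direction (Section~\ref{sec:saddle-direction}). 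The eigenvalue $\mu$ appearing in \eqref{eq:RS-form} is the eigenvalue $\neq1$ of $DF(0)$: the reduction blows up points of $\G$, which is tangent to the eigenspace of the eigenvalue $1$, and in the chart $x=u,\ y=uv$ the transform $\tilde F$ has linear part $\mathrm{diag}(1,\mu)$ at the relevant point, so the transverse multiplier $\mu$ is preserved. Hence $\ln|\mu|\geq0$.

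The heart of the argument — and the step I expect to be the main obstacle — is then to show that \emph{at least one attracting direction is a saddle}. If $|\mu|>1$ this is immediate: $\ln|\mu|>0$, so the leading entry of the lexicographic tuple is positive for every $\xi$ and no direction is a node. If $|\mu|=1$ and $p=0$ the tuple is $(\ln|\mu|)=(0)$, again not $<0$, hence a saddle. For $|\mu|=1$ and $p\geq1$ I would argue by contradiction. Normalizing $\ell=\xi\R^+$ to $\R^+$ via $x=\xi\tilde x$ leaves \eqref{eq:RS-form} in the same shape with each $A_i$ replaced by $\xi^{k+i}A_i$ and $\mu$ unchanged (because $\xi^{k+p}=1$), so $\ell$ is a node direction precisely when
\[
\bigl(\ln|\mu|,\ \Real(\xi^{k}A_0),\ \dots,\ \Real(\xi^{k+p-1}A_{p-1})\bigr)<0
\]
in the lexicographic order. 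Suppose every one of the $k+p$ directions were a node. Since $0<k+i<k+p$ for $0\leq i\leq p-1$, one has $\sum_{\xi^{k+p}=1}\xi^{k+i}=0$, whence $\sum_{\xi}\Real(\xi^{k+i}A_i)=0$ for each such $i$. I would then induct on $i_0$: assuming $\Real(\xi^{k+i}A_i)=0$ for all $\xi$ and all $i<i_0$, the node condition forces $\Real(\xi^{k+i_0}A_{i_0})\leq0$ for every $\xi$, and combined with the vanishing sum this gives $\Real(\xi^{k+i_0}A_{i_0})=0$ for all $\xi$. Thus all entries vanish, every tuple equals $0$, and no direction is a node — contradicting the assumption.

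It follows that a saddle direction $\ell$ exists, so $S_\ell$ is a parabolic curve consisting of orbits asymptotic to $\G$; undoing the reduction (a biholomorphism off the exceptional divisor) finally produces the desired parabolic curve for the original $F$ asymptotic to $\G$. I emphasize that the whole difficulty is concentrated in the combinatorial lexicographic argument of the third paragraph: once $\ln|\mu|\geq0$ is secured, the root-of-unity cancellation $\sum_\xi\xi^{k+i}=0$ is exactly what rules out the degenerate possibility that \emph{all} attracting directions are nodes, and everything else is a direct appeal to the already established machinery.
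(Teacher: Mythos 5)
Your proposal is correct, and its overall skeleton (verify the hypotheses of Theorem~\ref{th:main}, pass to a reduced pair, locate a saddle direction, apply the parabolic-curve theorem of Section~\ref{sec:saddle-direction}, and push the curve back through the reduction) coincides with the paper's proof in Section~\ref{sec:conclusion}. The genuine difference lies in the key step: the existence of a saddle direction when $|\mu|=1$ and $p\geq 1$. The paper proves a stronger, quantitative statement --- at least $\lceil r/4\rceil$ of the $r=k+p$ attracting directions are saddles --- by counting the directions $\xi\R^+$ with $\Real(\xi^kA_0)>0$ (each automatically a saddle) via a gcd analysis of roots of unity; this forces a separate treatment of the degenerate case $r/\gcd(r,k)=2$ (i.e.\ $k=p$), where $\Real(\xi^kA_0)$ may vanish for every attracting $\xi$ and one must pass to the first index $t$ with $A_t\neq 0$ and redo the count with $\gcd(r,k+t)$. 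Your argument is instead by contradiction: if every direction were a node, then $\ln|\mu|=0$, and inductively on $i_0$ the lexicographic node condition forces $\Real(\xi^{k+i_0}A_{i_0})\leq 0$ for all $\xi$, while the cancellation $\sum_{\xi^{k+p}=1}\xi^{k+i_0}=0$ (legitimate since $0<k+i_0<k+p$) forces these nonpositive numbers to vanish; hence every tuple is zero, which is not lexicographically negative, contradicting nodality. This averaging argument is more elementary and absorbs the troublesome $k=p$ case automatically, but it yields only the existence of one saddle direction, whereas the paper's count gives a lower bound on the number of parabolic curves. Since Theorem~\ref{th:generalizing-Lopez-Sanz} asks only for one parabolic curve, your route suffices; your remaining steps (non-periodicity of $F|_\G$ from $F|_\G\neq\id$, preservation of $\spec(DF(0))=\{1,\mu\}$ under blow-ups along $\G$ because the inner eigenvalue is $1$, and the transfer of the curve and of asymptoticity through the reduction map) all match the paper's own reasoning.
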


We end this introduction discussing some special situations for the diffeomorphism $F$ already treated in the literature and their relation with our approach to find stable manifolds.

- In the semi-hyperbolic attracting case ($|\mu|<1$), every attracting direction is a node direction. We obtain  $r=k+p$ open stable manifolds whose union forms a base for the set of orbits of $F$ asymptotic to $\G$. This case is the one considered by Ueda in \cite{Ued1}, and our unified point of view recovers his result (observe that in the semi-hyperbolic case, the Poincar\'{e}-Dulac normal form $\tilde F$ of $F$ has a unique formal invariant curve $\tilde\Gamma$ such that the restriction $\tilde F|_{\tilde\Gamma}$ is parabolic and hence so does $F$).

- In the semi-hyperbolic repelling case ($|\mu|>1$), every attracting direction is a saddle direction and we obtain $r=k+p$ parabolic curves, defined as graphs of holomorphic functions over open sectors in the $x$-variable, whose union is a base of the set of orbits asymptotic to $\G$. This case is also treated by Ueda in \cite{Ued2} and we again recover his conclusion.

- In the case $\spec(DF(0))=\{1\}$ and $p=0$ (\emph{Briot-Bouquet} case), we have that every attracting direction is a saddle direction. We obtain, as in \'Ecalle \cite{Eca} and Hakim \cite{Hak}, that there exist $k$ parabolic curves of $F$ whose union is a base of convergent orbits asymptotic to $\G$ (notice that the tangent direction of $\G$ in this case is a ``characteristic direction'' of $F$). This result was used by Abate \cite{Aba} (see also \cite{Bro-C-L}) to show that every tangent to the identity diffeomorphism with isolated fixed point has a parabolic curve.

- In the case $\spec(DF(0))=\{1,\mu\}$, with $|\mu|=1$, $\mu$ is not a root of unity and $p=0$, every attracting direction is a saddle direction. In this case, Bracci and Molino  \cite{Bra-M} proved the existence of $k$ parabolic curves of $F$. Since in this case there exists a formal invariant curve $\G$ such that $F|_\G$ is parabolic, using the Poincar\'e-Dulac normal form, our approach recovers their result and generalizes it to the case $p>0$.

- In the case $\spec(DF(0))=\{1\}$ and $\Real(A_0)>0$, a particular case of a saddle direction, L\'{o}pez and Sanz  proved in \cite{Lop-S} the existence of a parabolic curve of $F$ asymptotic to $\G$. Following the same arguments (which are in turn a modification of Hakim's proof in \cite{Hak}) we recover that result and generalize it for an arbitrary saddle direction.

- In the case $\spec(DF(0))=\{1\}$ and $\Real (A_0)<0$, a particular case of a node direction, Rong proved in \cite{Ron} the existence of an open stable manifold. Notice that, since $A_0\neq0$, applying Briot-Bouquet's theorem \cite{Bri-B} to the infinitesimal generator of $F$ we conclude that there always exists a formal invariant curve $\G$ such that $F|_\G$ is parabolic. Hence, our approach recovers Rong's result and generalizes it for an arbitrary node direction.

\section{Diffeomorphisms, invariant curves and blow-ups}\label{sec:blow-ups}

Let $F\in\Diff\Cd 2$ be a germ of a holomorphic diffeomorphism at the origin of $\C^2$. In this article we make use repeatedly of the behavior of $F$ under blow-up. Although quite well known (see for instance \cite{Rib}), let us summarize the principal properties, in order to fix notations and to establish a convenient terminology.

Let $\pi:\wt{\C^2}\to\C^2$ be the blow-up at the origin of $\C^2$ and denote by $E=\pi^{-1}(0)$ the exceptional divisor. Then $\wt{F}=\pi^{-1}\circ F\circ\pi$ extends to an injective holomorphic map in a neighborhood of $E$ in $\wt{\C^2}$ that leaves the divisor $E$ invariant and so that $\wt{F}|_E$ is the projectivization of the linear map $DF(0)$ in the identification $E\simeq\mathbb{P}^1_\C$. Hence, a point $p\in E$ is a fixed point for $\wt{F}$ if and only if $p$ corresponds to the projectivization of an invariant line $\ell$ of $DF(0)$. In this case we will say, in analogy with the standard terminology for curves, that $p$ is a {\em first infinitely near fixed point} of $F$ and that the germ $F_p$ of $\wt{F}$ at $p$ is the {\em transform} of $F$ at $p$. Repeating the operation of blowing-up, we can recursively define sequences $\{p_n\}_{n\geq 0}$ of {\em infinitely near fixed points} of $F$ and corresponding {\em transforms} $F_{p_n}$ putting $p_0=0$ and, for $n\geq 1$, taking $p_n$ as a first infinitely near point of $F_{p_{n-1}}$ (considered as an element of $\Diff\Cd 2$ after taking analytic coordinates at $p_{n-1}$).

Let us recall how the eigenvalues of the differential of a diffeomorphism vary under blow-ups. Let $\lambda,\mu$ be the eigenvalues of $DF(0)$ and let $p$ be an infinitely near fixed point of $F$ corresponding to an invariant line $\ell$ of $DF(0)$ associated to the eigenvalue $\lambda$; then the differential of the transform $F_p$ has eigenvalues $\{\lambda,\mu/\lambda\}$, where $\mu/\lambda$ is the eigenvalue associated to the tangent direction of the exceptional divisor $E$ at $p$. This can be seen by the following simple computation. Choose coordinates $(x,y)$ at $0\in\C^2$ such that $\ell$ is tangent to the $x$-axis and write
$
F(x,y)=(F_1(x,y),F_2(x,y))
$
and $DF(0)(x,y)=(\lambda x+ay,\mu y)$, where $a\in\C$.
Consider coordinates $(x',y')$ at $p$ so that $\pi$ is written as
$
\pi(x',y')=(x',x'y').
$
Then $\wt{F}=\pi^{-1}\circ F\circ\pi$ is written locally at $p$ as
\begin{equation}\label{eq:blow-up-F}
\wt{F}(x',y')=\left(F_1(x',x'y'),\frac{F_2(x',x'y')}{F_1(x',x'y')}\right),
\end{equation}
so that we obtain $D\wt{F}(p)(x',y')=(\lambda x',\frac{\mu}{\lambda} y'+b x')$ for some $b\in\C$, which gives the result (notice that $E=\{x'=0\}$ in these coordinates).

\strut

Let $\G$ be an (irreducible) {\em formal curve} at $0\in\C^2$. By definition, once we fix coordinates $(x,y)$ at the origin, $\G$ is a principal ideal of $\C[[x,y]]$, generated by an irreducible non-constant series $f(x,y)$. The {\em multiplicity} of $\G$ is the positive integer $\nu=\nu(\G)$ such that $f\in\mathfrak{m}^{\nu}\setminus\mathfrak{m}^{\nu+1}$, where $\mathfrak{m}$ is the maximal ideal of $\C[[x,y]]$. The formal curve $\G$ is {\em non-singular} if and only if $\nu=1$. If we write $f=f_\nu+f_{\nu+1}+\cdots$ as a sum of homogeneous polynomials, then $f_\nu=(ax+by)^\nu$ where $a,b\in\C$ are not both zero. The line $ax+by=0$ is the {\em tangent line} of $\G$ (in the coordinates $(x,y)$).

A formal curve $\G$ is uniquely determined by a {\em parametrization}, i.e. a pair $\g(s)=(\g_1(s),\g_2(s))\in\C[[s]]^2\setminus\{0\}$ with $\g(0)=(0,0)$ such that $h\in\G$ if and only if $h(\g(s))=0$. We can always consider a parametrization $\g(s)$ which is {\em irreducible} (i.e. it cannot be written as $\g(s)=\sigma(s^l)$ where $\sigma(s)$ is another parametrization of $\G$ and $l>1$). In fact, if $\g(s)$ is an irreducible parametrization of $\G$ then any other parametrization $\tilde{\g}(s)$ of $\G$ is written as $\tilde{\g}(s)=\g(\theta(s))$ for a unique $\theta(s)\in\C[[s]]$ with $\theta(0)=0$. If $\g(s)$ is irreducible, the multiplicity $\nu$ of $\G$ is the minimum of the orders of the series $\g_1(s),\g_2(s)\in\C[[s]]$ and the tangent line is given by $[\g_1(s)/s^\nu,\g_2(s)/s^\nu]|_{s=0}\in\mathbb{P}^1_\C$.

A formal curve $\G$ is also uniquely determined by its sequence $\{q_n\}_{n\geq 0}$ of {\em infinitely near points}, obtained by blow-ups as follows. Put $q_0=0$. If $\pi:\wt{\C^2}\to\C^2$ is the blow-up of $\C^2$ at the origin, $q_1\in\pi^{-1}(0)$ is the point corresponding to the tangent line of $\G$ in the identification $\pi^{-1}(0)\simeq\mathbb{P}^1_\C$. There is a unique irreducible formal curve $\G_1$ at $q_1$ such that $\G_1$ is different from the exceptional divisor at $q_1$ and which satisfies $\pi^*\G\subset\G_1$, where $\pi^*\G=\{h\circ\pi\,:\,h\in\G\}$, called the {\em strict transform} of $\G$. Then, recursively for $n\geq 2$, $q_n$ is the point corresponding to the tangent line of $\G_{n-1}$ and $\G_n$ is the strict transform of $\G_{n-1}$ by the blow-up at $q_{n-1}$.

\strut

In the following proposition, we present several equivalent definitions for a formal curve to be invariant for a diffeomorphism. Although quite well known, we include its proof for the sake of completeness.

\begin{proposition}\label{pro:invariant}
Consider $F\in\Diff\Cd 2$ and let $\G$ be an irreducible formal curve at the origin of $\C^2$. The following properties are equivalent:
\begin{enumerate}[(a)]
  \item For any $h\in\G$, one has $h\circ F\in\G$.
  \item Given a parametrization $\g(s)$ of $\G$, there exists $\theta(s)\in\C[[s]]$ with $\theta(0)=0$ and $\theta'(0)\neq 0$ such that $F\circ\g(s)=\g\circ\theta(s)$.
  \item The sequence of infinitely near points of $\G$ is a sequence of infinitely near fixed points of $F$.
\end{enumerate}
If any of the conditions above holds, we say that $\G$ is an {\em invariant formal curve} of $F$.
\end{proposition}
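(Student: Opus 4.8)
The plan is to prove the equivalence of the three conditions by establishing the cycle $(a)\Rightarrow(b)\Rightarrow(c)\Rightarrow(a)$, working with an irreducible parametrization throughout. First I would fix an irreducible parametrization $\g(s)$ of $\G$, so that by the properties recalled before the statement, any $h\in\C[[x,y]]$ lies in $\G$ if and only if $h\circ\g=0$, and any parametrization of $\G$ differs from $\g$ by precomposition with a series vanishing at $0$. The key technical point throughout is that $F$ is a diffeomorphism, so $DF(0)$ is invertible; this controls orders and guarantees that the reparametrizations produced are themselves invertible.

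For $(a)\Rightarrow(b)$, I would consider the composition $F\circ\g(s)$, which is a formal map $\C[[s]]^2$ vanishing at $s=0$. The hypothesis says that for every $h\in\G$ we have $h\circ F\in\G$, hence $(h\circ F)\circ\g=0$; that is, $h\circ(F\circ\g)=0$ for all $h\in\G$. This says precisely that $F\circ\g$ is a parametrization of $\G$ (it annihilates exactly the elements of $\G$, using that $\G$ is prime and $F\circ\g\neq 0$ since $F$ is a diffeomorphism). By the uniqueness clause for parametrizations recalled in the excerpt, there exists a unique $\theta(s)\in\C[[s]]$ with $\theta(0)=0$ such that $F\circ\g(s)=\g\circ\theta(s)$. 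It remains to check $\theta'(0)\neq 0$: comparing the lowest-order terms on both sides and using that $DF(0)$ is invertible (so $F\circ\g$ has the same multiplicity $\nu$ as $\g$), a vanishing $\theta'(0)$ would force the order of $\g\circ\theta$ to exceed $\nu$, a contradiction.

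For $(b)\Rightarrow(c)$, I would argue by induction on the blow-up level, tracking the infinitely near points of $\G$ and the infinitely near fixed points of $F$ simultaneously. At the base level, $(b)$ with $\theta'(0)\neq 0$ shows that $F\circ\g$ and $\g$ have the same tangent direction, so $F$ fixes the point $q_1\in E$ determined by the tangent line of $\G$; hence $q_1$ is a first infinitely near fixed point of $F$. The induction step is the crux: I would lift the parametrization via the blow-up formula \eqref{eq:blow-up-F}, check that $\g$ pushes forward to an irreducible parametrization of the strict transform $\G_1$, and verify that the transform $F_{q_1}$ satisfies condition $(b)$ with respect to this lifted parametrization (with the same $\theta$). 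This is the step I expect to be the main obstacle, since one must confirm that the strict transform is genuinely parametrized by the lift of $\g$ and that the reparametrization relation is preserved by the blow-up, handling the possible drop in multiplicity carefully. Iterating, every $q_n$ is an infinitely near fixed point of the corresponding transform, which is exactly $(c)$.

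Finally, for $(c)\Rightarrow(a)$, I would use that $(c)$ forces the strict transforms $\G_n$ to be invariant by the transforms $F_{q_n}$ at every level, and then descend. Concretely, invariance of the full sequence of infinitely near points means $F$ preserves $\G$ as a formal object at every blow-up; pushing this down to the base via $\pi$ and using that $\G$ is recovered from its sequence of infinitely near points yields $\G\circ F=\G$, equivalently $h\circ F\in\G$ for every $h\in\G$. Alternatively, once $(b)$ is known to be equivalent to $(c)$ through the induction, one closes the cycle directly: given $h\in\G$ we have $h\circ\g=0$, and $(h\circ F)\circ\g=h\circ(F\circ\g)=h\circ(\g\circ\theta)=(h\circ\g)\circ\theta=0$, so $h\circ F\in\G$, which is $(a)$.
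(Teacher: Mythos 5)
Your directions (a)$\Rightarrow$(b) and (b)$\Rightarrow$(c) follow the paper's proof essentially step for step: (a)$\Rightarrow$(b) by recognizing $F\circ\g$ as a parametrization of $\G$ and comparing lowest orders to get $\theta'(0)\neq 0$, and (b)$\Rightarrow$(c) by lifting the parametrization through each blow-up and checking that the relation $F_{q_1}\circ\tilde{\g}=\tilde{\g}\circ\theta$ persists for the strict transform. The problem is the closing implication (c)$\Rightarrow$(a), which is the genuinely hard direction, and there your argument is circular. You write that ``(c) forces the strict transforms $\G_n$ to be invariant by the transforms $F_{q_n}$ at every level'' --- but invariance of the curve (and of its strict transforms) is exactly the conclusion to be proven; hypothesis (c) only says that certain \emph{points} on the successive exceptional divisors are fixed by the lifted maps, and nothing in your text converts point-fixing into curve-invariance. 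Your fallback is also unsound: the induction in the previous step establishes only (b)$\Rightarrow$(c), not an equivalence, so the phrase ``once (b) is known to be equivalent to (c) through the induction'' assumes precisely the missing implication (c)$\Rightarrow$(b); the computation you then display is just (b)$\Rightarrow$(a), which cannot close the cycle starting from (c).

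For comparison, the paper handles (c)$\Rightarrow$(a) in three steps: first, the blow-up argument shows that property (b) is stable under blow-up \emph{and blow-down}, so it suffices to prove the implication after finitely many blow-ups; second, by reduction of singularities of formal curves one may assume $\G$ is non-singular; third, in formal coordinates $(\hat{x},\hat{y})$ with $\G=(\hat{y})$, the infinitely near points of $\G$ are the centers of the charts in which the composed blow-ups read $(\hat{x}_n,\hat{y}_n)\mapsto(\hat{x}_n,\hat{x}_n^n\hat{y}_n)$, and the hypothesis that each $q_n$ is an infinitely near fixed point forces $\hat{x}^n$ to divide $F_2(\hat{x},\hat{x}^n\hat{y})$ for every $n$, whence $\hat{y}$ divides $F_2$, which is (a). If you prefer to salvage your own sketch, the viable route hidden in it is: let $\G'$ be the image curve of $\G$ under $F$, parametrized by $F\circ\g$; show, by the same tangent-line computation used in (b)$\Rightarrow$(c), that the sequence of infinitely near points of $\G'$ consists of the images of the $q_n$ under the successive transforms of $F$, hence equals $\{q_n\}$ when all these points are fixed; then invoke the fact that a formal curve is uniquely determined by its sequence of infinitely near points to conclude $\G'=\G$, i.e.\ (a). But that intermediate identification of the infinitely near points of $\G'$ is a claim that must be proved, and it appears nowhere in your proposal.
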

\begin{proof}
Notice first that in (a) it is sufficient to consider $h$ a fixed generator of $\G$. Also, in (b) it suffices to consider $\g(s)$ a fixed irreducible parametrization: if $\tilde{\g}(s)$ is another parametrization, then $\tilde{\g}(s)=\g(\tau(s))$ where $\tau(s)\in\C[[s]]$ has order $l>0$. Hence, assuming (b) for $\g(s)$, $F\circ\tilde{\g}(s)=\g(\theta(\tau(s)))$ and, since $\theta\circ\tau(s)$ and $\tau(s)$ have the same order, there exists some $\alpha(s)\in\C[[s]]$ with $\alpha(0)=0$ and $(\alpha'(0))^l=\theta'(0)\neq 0$ such that $\theta\circ\tau(s)=\tau\circ\alpha(s)$. This shows property (b) for $\tilde{\g}(s)$.

 Let us prove the equivalence between (a) and (b). Let $h$ be a generator of $\G$ and let $\g(s)$ be an irreducible parametrization of $\G$. Then we have
 property (a) if and only if  $h\circ F(\g(s))=0$, which is equivalent to saying that $F\circ\g(s)$ is a parametrization of $\G$, which, in turn, is equivalent to the existence of some $\theta(s)\in\C[[s]]$ with $\theta(0)=0$ such that $F\circ\g(s)=\g(\theta(s))$. The additional condition $\theta'(0)\neq 0$ in this last case is a consequence of the fact that the minimum of the orders of the components of $F\circ\g(s)$ and of $\g(s)$ are the same.

Let us prove the equivalence between (b) and (c). First, assume that property (b) holds and let $\g(s)$ be an irreducible parametrization of $\G$.
On the one hand, property (b) for $\g(s)$ implies that the tangent line of $\G$ is an invariant line of $DF(0)$. Thus, if $q_1$ is the first infinitely near point of $\G$, $q_1$ is an infinitely near fixed point of $F$. On the other hand, one can see that $\tilde{\g}(s)=\pi^{-1}\circ\g(s)$ is a parametrization of the strict transform $\G_1$ of $\G$ by the blow-up $\pi$ at the origin which moreover satisfies $F_{q_1}\circ\tilde{\g}(s)=\tilde{\g}\circ\theta(s)$. Repeating the argument, we prove (c). Now assume that (c) holds. Notice that the last argument presented above shows that property (b) is stable both under blow-up and blow-down, i.e. property (b) holds for $F$ and $\G$ at the origin if and only if it holds for the transform $F_{q_1}$ of $F$ and the strict transform $\G_1$ of $\G$ at the first infinitely near point $q_1$ of $\G$. Then, using reduction of singularities of formal curves, we can assume that $\G$ is non-singular. Let us show in this case that (c) implies (a), which is equivalent to (b). Consider formal coordinates $(\hat{x},\hat{y})$ such that $\G$ is generated by $\hat{y}$ and write $F=(F_1(\hat{x},\hat{y}),F_2(\hat{x},\hat{y}))$ in those coordinates. The sequence of infinitely near points of $\G$ is given by the centers $q_n$ of the charts $(\hat{x}_n,\hat{y}_n)$ for which the corresponding composition of blow-ups is written as $(\hat{x}_n,\hat{y}_n)\mapsto(\hat{x}_n,(\hat{x}_n)^n\hat{y}_n)$ and the expression of the corresponding transformed diffeomorphism at $q_n$ is obtained repeating $n$ times the computation in \eqref{eq:blow-up-F}. In particular, if $q_n$ is an infinitely near fixed point of $F$ then $F_2(\hat{x},\hat{x}^n\hat{y})$ is divisible by $\hat{x}^n$ for any $n$. Thus $\hat{y}$ divides $F_2(\hat{x},\hat{y})$, which shows property (a).
\end{proof}

If $\G$ is a formal invariant curve of a diffeomorphism $F$, the series $\theta(s)\in\C[[s]]$ given by property (b) in Proposition~\ref{pro:invariant} can be considered as a formal diffeomorphism in one variable, i.e. $\theta(s)\in\widehat{\Diff}(\C,0)$.  Note that the class of formal conjugacy of $\theta(s)$ is independent of the chosen parametrization $\g(s)$ in (b). Any representative of this class will be called the {\em restriction} of $F$ to $\G$ and denoted by $F|_\G$.  Notice that if $\alpha\in\Z$ then $\G$ is invariant by $F^\alpha$ and
$$
(F|_\G)^\alpha=F^\alpha|_\G.
$$

The number $\lambda_\G=\theta'(0)\in\C^*$, called the {\em inner eigenvalue}, is intrinsically defined and invariant under blow-ups (since $\theta(s)$ is stable under blow-ups as mentioned in the proof of Proposition~\ref{pro:invariant}).

On the other hand, let $\lambda(\G)$ be the eigenvalue of the differential $DF(0)$ corresponding to the tangent direction of $\G$, that we call the {\em tangent eigenvalue}. The relation between the inner and the tangent eigenvalues is given by the following lemma, which can be proved by a simple computation.

\begin{lemma}\label{lm:restricted-eigenvalue}
If $\nu$ is the multiplicity of $\G$ and $\lambda_\G$, $\lambda(\G)$ are respectively the inner and the tangent eigenvalues of $\G$, then we have $(\lambda_\G)^\nu=\lambda(\G)$.
\end{lemma}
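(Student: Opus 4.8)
The plan is to work with an explicit irreducible parametrization and compare lowest-order terms in the defining identity $F\circ\g(s)=\g\circ\theta(s)$ provided by property (b) of Proposition~\ref{pro:invariant}. First I would fix formal coordinates $(x,y)$ at the origin in which the tangent line of $\G$ is the $x$-axis. Since $\G$ is invariant, this tangent line is an invariant line of $DF(0)$ (as established in the proof of Proposition~\ref{pro:invariant}), so, writing $F=(F_1,F_2)$, the differential has the form $DF(0)(x,y)=(\lambda(\G)x+ay,\mu y)$ for some $a,\mu\in\C$; in particular $F_1(x,y)=\lambda(\G)x+ay+(\text{order}\geq 2)$ and $F_2(x,y)=\mu y+(\text{order}\geq 2)$, and the eigenvalue of $DF(0)$ along the tangent direction is precisely $\lambda(\G)$ because there is no $x$-linear term in $F_2$.

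Next I would take $\g(s)=(\g_1(s),\g_2(s))$ to be an irreducible parametrization of $\G$. By the description of the multiplicity and the tangent line in terms of parametrizations, the minimum of the orders of $\g_1,\g_2$ equals $\nu$, and because the tangent line is the $x$-axis we have $\g_1(s)=c\,s^\nu+\cdots$ with $c\neq 0$, while $\ord(\g_2)>\nu$. I then compute the coefficient of $s^\nu$ on each side of $F\circ\g(s)=\g\circ\theta(s)$ in the first coordinate. On the left, $F_1(\g(s))=\lambda(\G)\g_1(s)+a\g_2(s)+(\text{order}\geq 2\nu)$; since $a\g_2(s)$ has order $>\nu$ and the quadratic-and-higher part of $F_1$ evaluated at $\g(s)$ has order $\geq 2\nu>\nu$, the lowest-order term is $\lambda(\G)c\,s^\nu$. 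On the right, using $\theta(s)=\lambda_\G s+\cdots$ (recall $\theta'(0)=\lambda_\G$), we get $\g_1(\theta(s))=c\,\theta(s)^\nu+\cdots=c\,(\lambda_\G)^\nu s^\nu+\cdots$. Equating the coefficients of $s^\nu$ and cancelling $c\neq 0$ yields $(\lambda_\G)^\nu=\lambda(\G)$.

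The computation is essentially routine, and the only points requiring care are the order bookkeeping. One must check that every competing contribution to the first coordinate of $F\circ\g$ genuinely has order strictly larger than $\nu$, which relies on the normalization that the tangent line is the $x$-axis: this both forces $\ord(\g_1)=\nu$ exactly (rather than merely $\geq\nu$) and guarantees the absence of a linear $x$-term in $F_2$ that would otherwise obstruct identifying $\lambda(\G)$ with the eigenvalue along the tangent direction. Finally, since $\lambda_\G$, $\lambda(\G)$ and $\nu$ are all intrinsically defined, the identity need only be verified in this one convenient coordinate system and for one irreducible parametrization, which completes the argument.
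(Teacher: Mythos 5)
Your proof is correct and complete: the paper itself omits the argument, stating only that the lemma ``can be proved by a simple computation,'' and your computation is precisely that one. Normalizing the tangent line to the $x$-axis, using $F\circ\g=\g\circ\theta$ from Proposition~\ref{pro:invariant}(b), and matching the coefficient of $s^\nu$ in the first coordinate (with the order bookkeeping $\ord(\g_1)=\nu$, $\ord(\g_2)>\nu$, and quadratic terms contributing order $\geq 2\nu>\nu$) is exactly the intended route, and all the steps you flag as needing care are handled correctly.
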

In particular, $\lambda_\G=\lambda(\G)$ if $\G$ is non-singular. The equality is not necessarily true when $\G$ is singular. Consider for instance the linear diffeomorphism
$
F(x,y)=(x,-y)
$. For any natural odd number $n\geq 3$, the curve $\G_{n}$ generated by the polynomial $x^n-y^2$ is invariant for $F$ and tangent to the $x$-axis, an eigendirection with associated eigenvalue equal to $1$, whereas $\lambda_{\G_{n}}=-1$ for any such $n$.
Notice that this example also shows that
 the tangent eigenvalue $\lambda(\G)$ is not invariant under blow-up (after some blow-ups, the formal curve becomes non-singular and hence $\lambda_\G$ and $\lambda(\G)$ eventually coincide).

\begin{definition}\label{def:restricted-Gamma}
Let $\G$ be a formal invariant curve of $F\in\Diff\Cd 2$ and let $\lambda_\G$ be the inner eigenvalue. We say that $\G$ is {\em hyperbolic} if $|\lambda_\G|\neq 1$ (\emph{attracting} if $|\lambda_\G|<1$ and \emph{repelling} if $|\lambda_\G|>1$), and that $\G$ is \emph{rationally neutral} if $\lambda_\G$ is a root of unity; in the particular case $\lambda_{\G}=1$, we say that $\G$ is \emph{parabolic}.
\end{definition}
Notice that the condition of $\G$ being hyperbolic, rationally neutral or parabolic is stable under blow-ups.

\strut

We discuss now the concept of asymptotic orbit which appears in the statement of Theorem~\ref{th:main}. In fact, we will consider such property for larger stable sets of a diffeomorphism $F\in\Diff\Cd 2$. Recall from the introduction that a {\em stable manifold} of $F$ (in $U$) is an analytic locally closed submanifold $S$ in a neighborhood $U$ where $F$ is defined such that $F(S)\subset S$ and such that, for any point $a=a_0\in S$, the orbit $\{a_n=F^n(a)\}_n$ converges to the origin. The smallest non-trivial example of a zero-dimensional stable manifold is an orbit which converges and is not reduced to the origin, called a {\em (non-trivial) stable orbit} of $F$. Another interesting example is a {\em parabolic curve}, defined as a connected and simply connected stable manifold of pure dimension one not containing the origin.
\begin{definition}\label{def:asymptotic-stable-manifolds}
Let $S$ be a stable set of $F$ such that $0\not\in S$. We say that $S$ has the property of {\em iterated tangents} if the following holds: if $\pi_1:M_1\to\C^2$ is the blow-up at the origin and $S_1=\pi_1^{-1}(S)$, then $\overline{S_1}\cap\pi_1^{-1}(0)$ is a single point $p_1$; if $\pi_2:M_2\to M_1$ is the blow-up at $p_1$ and  $S_2=\pi_2^{-1}(S_1)$, then $\overline{S_2}\cap\pi_2^{-1}(p_1)$ is a single point $p_2$; and so on. The sequence of points $\{p_n\}_n$ so constructed is called the sequence of {\em iterated tangents} of the stable manifold $S$. Given a formal curve $\G$ at $0\in\C^2$, we say that $S$ is {\em asymptotic to}  $\G$ if $S$ has the property of iterated tangents and its sequence of iterated tangents is equal to the sequence of infinitely near points of $\G$.
\end{definition}

Notice that if $S$ is a stable manifold with the property of iterated tangents, then any stable orbit $O\subset S$ also has the property (and the same sequence of iterated tangents), but the converse does not need to be true. On the other hand, if $\{p_n\}$ is the sequence of iterated tangents of a stable manifold $S$, then each $p_n$ is a fixed point of the corresponding transform of $F$ at the point $p_n$. Thus, by Proposition~\ref{pro:invariant}, if $S$ is asymptotic to a formal curve $\G$ then $\G$ is an invariant curve of $F$.

Stable orbits of a diffeomorphism need not have the property of iterated tangents. We can take for instance a linear diffeomorphism
$
F(x,y)=(ax,ae^{2\pi i\theta}y)
$,
where $a\in\C$ satisfies $0<|a|<1$ and $\theta$ is  irrational. Since the origin is a global attractor for $F$, any orbit of $F$ is a stable orbit, but only those orbits contained in one of the (invariant) coordinate axes have the property of iterated tangents. In fact, if $\{(x_n,y_n)\}$ is an orbit of $F$ with $x_ny_n\neq 0$ for any $n$, we have $[x_n:y_n]=[c:e^{2\pi in\theta}]\in\mathbb{P}_{\C}^1$ for some non-zero constant $c$, which has infinitely many accumulation points when $n$ goes to infinity.

On the other hand, there may exist stable orbits with iterated tangents which are not asymptotic to any formal curve. As an example, we can consider a linear diffeomorphism
$
F(x,y)=(ax+ay,ay)
$,
where $0<|a|<1$. The orbits of $F$ are asymptotic to the exceptional divisor after a blow-up at the origin, but they are not asymptotic to a formal curve in the ambient space. More precisely, the unique formal invariant curve $\G$ of $F$ is the $x$-axis. Any non-trivial orbit $O$ of $F$ is stable and tangent to $\G$, i.e. its transform $\pi^{-1}(O)$ by the blow-up $\pi$ at the origin is a stable orbit of the transformed diffeomorphism $F_{p_1}$, where $p_1$ corresponds to $[1:0]$. One can see that if $O$ is not contained in $\G$ then $\pi^{-1}(O)$ is asymptotic to the exceptional divisor $E=\pi^{-1}(0)$.

It is worth to notice that the property of being asymptotic to a formal curve $\G$ in Definition~\ref{def:asymptotic-stable-manifolds} corresponds actually to the standard analytic meaning of having $\G$ as ``asymptotic expansion''. To fix ideas, if $\G$ is non-singular and we consider a parametrization of the form $\g(s)=(s,h(s))$ where $h(s)=\sum_{n\geq 1}h_ns^n\in\C[[s]]$, then a non-trivial orbit $O=\{(x_n,y_n)\}$ is asymptotic to $\G$ if and only if for any $N\in\N$ there exist some $C_N>0$ and some $n_0=n_0(N)\in\N$ such that, for any $n\geq n_0$, we have
$$
\left|y_n-(h_1x_n+h_2x_n^2+\cdots+h_Nx_n^N)\right|\leq C_N|x_n|^{N+1}.
$$
A similar condition (see \cite{Lop-S}) can be considered for a parabolic curve asymptotic to a formal curve $\G$. It is worth to remark that our definition of parabolic curve asymptotic to a formal curve coincides with that of ``robust parabolic curve'' in \cite{Aba-T}.

\strut

We can now restate our main result Theorem~\ref{th:main}. Since we use different arguments, we consider the two different situations in separate statements.

\begin{theorem}[$\G$-hyperbolic case]\label{th:main2-hyperbolic}
Let $F\in\Diff\Cd 2$ and let $\G$ be an invariant formal curve of $F$. Assume that $\G$ is hyperbolic attracting. Then $\G$ is a germ of an analytic curve at the origin such that a (sufficiently small) representative of it is a stable manifold of $F$ and contains the germ of any orbit of $F$ asymptotic to $\G$.
\end{theorem}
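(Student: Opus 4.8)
The plan is to use blow-ups to reduce the statement to the case of a non-singular invariant curve attached to a genuinely hyperbolic (saddle) fixed point, where the classical Stable Manifold and Hartman--Grobman Theorems apply directly. First I would reduce to $\G$ non-singular: by reduction of singularities of formal curves, finitely many blow-ups transform $\G$ into a non-singular curve; by Proposition~\ref{pro:invariant}(c) invariance is preserved, and the inner eigenvalue $\lambda_\G$ (hence the hypothesis $|\lambda_\G|<1$) is invariant under blow-ups. The conclusions descend under blow-down: a blow-up is a biholomorphism off the exceptional divisor, so an analytic curve blows down to an analytic curve, a stable manifold transforms into a stable manifold, and, by Definition~\ref{def:asymptotic-stable-manifolds}, an orbit of $F$ asymptotic to $\G$ corresponds to an orbit of the transform asymptotic to the strict transform. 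Thus I may assume $\G$ is non-singular, tangent to the $x$-axis, and by Lemma~\ref{lm:restricted-eigenvalue} its tangent eigenvalue equals $\lambda:=\lambda_\G$ with $|\lambda|<1$; let $\mu\neq0$ be the other eigenvalue of $DF(0)$.

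The key step is to arrange transversal hyperbolicity. The infinitely near points of the non-singular invariant curve $\G$ are exactly the points at which one blows up, and each corresponds to the $\lambda$-eigendirection. By the computation in \eqref{eq:blow-up-F}, blowing up there replaces the transversal eigenvalue $\mu$ by $\mu/\lambda$ while keeping $\lambda$ as the tangent eigenvalue of the (still non-singular) strict transform. Iterating $n$ times yields transversal eigenvalue $\mu/\lambda^n$, and since $|\lambda|<1$ we have $|\mu/\lambda^n|\to\infty$; I fix $n$ so that $|\mu/\lambda^n|>1$. After this reduction $0$ is a hyperbolic saddle, with stable eigenvalue $\lambda$ ($|\lambda|<1$) tangent to $\G$ and unstable eigenvalue $\mu':=\mu/\lambda^n$ ($|\mu'|>1$) transversally.

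Now I would invoke the holomorphic Stable Manifold Theorem: there is a unique one-dimensional holomorphic local stable manifold $W^s$, tangent to the $x$-axis, and $W^s\setminus\{0\}$ is a stable manifold of $F$ in the sense of the paper. To identify $\G$ with $W^s$, write both as formal graphs $\{y=\phi(x)\}$ with $\phi=O(x^2)$ and impose invariance $g(x,\phi(x))=\phi(f(x,\phi(x)))$, where $F=(f,g)$, $f=\lambda x+\cdots$, $g=\mu' y+\cdots$ (choosing eigencoordinates, which is possible as $\lambda\neq\mu'$). Matching coefficients of $x^k$ gives, for each $k\geq2$, a relation of the form
\[
(\mu'-\lambda^k)\,\phi_k=(\text{polynomial in }\phi_2,\dots,\phi_{k-1}),
\]
and since $|\mu'|>1>|\lambda|^k$ the factor $\mu'-\lambda^k$ never vanishes, so $\phi$ is uniquely determined. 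Therefore $\G$ coincides with the convergent expansion of $W^s$; in particular $\G$ is analytic and equals $W^s$.

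Finally, by the Stable Manifold Theorem (or Hartman--Grobman) the local stable set of the saddle equals $W^s$: any orbit converging to $0$ while staying in a small neighborhood lies eventually on $W^s$. In particular every orbit asymptotic to $\G$ is eventually contained in $W^s=\G$. Blowing down, $\G$ is a germ of analytic curve, a small representative is a stable manifold of $F$, and it contains the germ of any orbit asymptotic to $\G$. I expect the main subtlety to be not the dynamics but the bookkeeping: verifying that analyticity, the stable-manifold property, and asymptoticity all transfer correctly through the tower of blow-ups and the final blow-down. Once the transversal eigenvalue has been pushed outside the unit circle, the classical theorems do the rest.
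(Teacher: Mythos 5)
Your proposal is correct and takes essentially the same route as the paper's proof: reduce singularities, blow up along $\G$ until the transversal eigenvalue $\mu/\lambda^{n}$ has modulus greater than $1$, identify $\G$ with the analytic stable manifold there, and use Hartman--Grobman to conclude that any orbit asymptotic to $\G$ lifts to an orbit converging to the corresponding infinitely near fixed point and hence lies in $\G$ (this is precisely the content of Propositions~\ref{pro:anahyp1}, \ref{pro:anahyp} and \ref{pro:hypsad}). The only minor variation is the identification step: the paper establishes that $\G$ is the unique formal invariant curve not tangent to the $\mu$-eigenspace by tracking infinitely near points under the weaker hypothesis $|\lambda|<\min(1,|\mu|)$ (invoking Ruelle's stable manifold theorem without needing a genuine saddle), whereas you obtain the same identification by non-resonant coefficient matching ($\mu'\neq\lambda^{k}$) after pushing the transversal eigenvalue outside the unit circle; both arguments are sound.
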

\begin{theorem}[$\G$-rationally neutral case]\label{th:main2-parabolic}
Consider $F\in\Diff\Cd 2$ and let $\G$ be an invariant formal curve of $F$. Assume that $\G$ is rationally neutral and that the restricted diffeomorphism $F|_\G$ is not periodic. Then, for any sufficiently small neighborhood $V$ of the origin, there exists a non empty finite family of mutually disjoint stable manifolds $\{S_1,...,S_r\}$ in $V$ of pure positive dimension satisfying:
\begin{enumerate}[(i)]
\item Every orbit in the union $S=\bigcup_{j=1}^r S_j$ is  asymptotic to $\G$.
\item $S$ contains the germ of any orbit of $F$  asymptotic to $\G$.
\item If $n$ is the order of the inner eigenvalue $\lambda_\G$ as a root of unity, then each $S_j$ is a finite union of $n$ connected and simply connected mutually disjoint stable manifolds $S_{j1},\ldots,S_{jn}$ of the iterated diffeomorphism $F^n$ (i.e. either parabolic curves or open stable sets of $F^n$). In fact, $S_{ji}=F(S_{j,i-1})$ for $i=2,...,n$ and for any $j$.
\end{enumerate}
Moreover, if $\dim(S_j)=1$ then $S_{j}$ is asymptotic to $\G$. If $\dim(S_j)=2$, one can also choose $S_{j}$ to be asymptotic to $\G$.
\end{theorem}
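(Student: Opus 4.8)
The plan is to follow the roadmap sketched in the introduction: reduce to the normal form \eqref{eq:RS-form} and then build one stable manifold per attracting direction, guided by the toy model $\EXP(Z)$. Since $\lambda_\G$ is a root of unity of order $n$, the restriction $F^n|_\G$ is parabolic and $F^n|_\G\neq\id$ (because $F|_\G$ is not periodic), so I would first prove the theorem for $F^n$, obtaining connected and simply connected stable manifolds $S_{j1}$ of $F^n$, and then recover the family for $F$ by setting $S_{ji}=F^{i-1}(S_{j1})$ and $S_j=\bigcup_{i=1}^n S_{ji}$. The $F^n$-invariance of $S_{j1}$ makes each $S_j$ invariant under $F$, giving property (iii); pairwise disjointness and the passage of the asymptotic property from $F^n$ to $F$ are handled exactly as in Remark~\ref{rk:periodic-to-invariant}. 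Thus it suffices to treat the parabolic case. Next, after finitely many blow-ups along the infinitely near points of $\G$ (which are infinitely near fixed points of $F$ by Proposition~\ref{pro:invariant}), I may assume $\G$ is non-singular, tangent to the $x$-axis, and $F$ has the shape \eqref{eq:RS-form}; since a stable manifold asymptotic to $\G$ corresponds under blow-up to one asymptotic to the strict transform (Definition~\ref{def:asymptotic-stable-manifolds}), it is enough to produce the family in the reduced chart and push it forward by the blow-down.

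With $F$ in the form \eqref{eq:RS-form}, I would first analyze the $x$-component. The germ $x\mapsto x-x^{k+p+1}+\cdots$ is a one-dimensional parabolic diffeomorphism, so by the Leau-Fatou Flower Theorem it has $k+p$ attracting petals $\petal$, each bisected by one attracting direction $\ell=\xi\R^+$ with $\xi^{k+p}=1$, on which $x_n\to 0$ with limit direction $\ell$. For each such $\ell$ I would study the first integral $H(x,y)=yh(x)$ of the toy model $Z$ on a domain $\Omega\supset\ell$ lying over the petal, in order to decide whether $h$ or $1/h$ is flat there. This is precisely the saddle/node dichotomy fixed by the lexicographic sign of $(\ln|\mu|,\Real(A_0),\dots,\Real(A_{p-1}))$, and it determines both the dimension and the shape of the manifold to be constructed.

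Next I would construct $S_\ell$ for each attracting direction. In the saddle case (Section~\ref{sec:saddle-direction}) I would seek $S_\ell$ as a graph $y=\vp(x)$ over the petal, obtaining $\vp$ as the fixed point of a contraction built from $F$, in the spirit of Hakim and of L\'{o}pez and Sanz; the flatness of $h$ forces the graph to be the only bounded fiber, so $S_\ell$ is a parabolic curve. In the node case (Section~\ref{sec:node-direction}) I would instead show, following Rong, that every fiber over $\Omega$ stays bounded and converges to $y=0$, so that a perturbation of $\Omega\times\C$ is a simply connected open stable manifold. In both cases one controls the difference $F-\EXP(Z)$, which by construction vanishes to order $k+p+1$, and one checks the iterated-tangents condition to verify that $S_\ell$ is asymptotic to $\G=\{y=0\}$.

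Finally I would prove that $\{S_\ell\}$ captures every asymptotic orbit. Given an orbit $O=\{(x_n,y_n)\}$ asymptotic to $\G$, the sequence $\{x_n\}$ converges to $0$ along one of the directions $\ell$, and the key point is that $\Omega$ eventually contains any orbit whose $x$-part has limit direction $\ell$, so $O$ is eventually contained in $S_\ell$; this yields property (ii), while property (i) and pairwise disjointness follow from the explicit control of $H$ on the disjoint domains $\Omega$. Assembling these $S_\ell$ and reversing the two reductions completes the proof (Section~\ref{sec:conclusion}). I expect the main obstacle to be the construction and capture steps in the general setting $|\mu|=1$ with $p>0$: there $\Omega$ is not a sector but a curved region dictated by the flatness locus of $H$, so both the contraction estimate (saddle case) and the boundedness estimate (node case) must be carried out on these irregular domains while handling the error $F-\EXP(Z)$ to sufficient order, and showing that such domains still absorb all orbits with the prescribed limit direction is the technically delicate part.
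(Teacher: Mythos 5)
Your proposal is correct and follows essentially the same route as the paper: reduction to the parabolic case via $F^n$ and Remark~\ref{rk:periodic-to-invariant}, blow-up to the reduced form \eqref{eq:RS-form}, the lexicographic saddle/node dichotomy for each attracting direction, a contraction-mapping construction of parabolic curves in the saddle case, open stable manifolds in the node case, and the capture argument for orbits with a prescribed limit direction. You also correctly identify the genuinely delicate point, namely that for $|\mu|=1$ and higher significant order the domains are curved regions rather than sectors, which is exactly where the paper's technical work (the domains $R_{d,e,\varepsilon}$ and the associated estimates) is concentrated.
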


In Section~\ref{sec:hyperbolic} we prove Theorem~\ref{th:main2-hyperbolic} and other related questions concerning the case where $\G$ is hyperbolic. The proof of Theorem~\ref{th:main2-parabolic} is more involved and will be carried on in Sections~\ref{sec:reduction} to \ref{sec:conclusion}. As mentioned in the introduction, by the same arguments used in Remark~\ref{rk:periodic-to-invariant}, to show Theorem~\ref{th:main2-parabolic} it suffices to consider the case $\lambda_{\G}=1$ ($\G$-parabolic case).

\section{$\G$-hyperbolic case}\label{sec:hyperbolic}

In this section, we assume that $\Gamma$ is a hyperbolic formal invariant curve of $F \in \Diff\Cd 2$, i.e. $|(F|_{\G})'(0)|\neq1$.

We prove Theorem~\ref{th:main2-hyperbolic} and other results related to this case. They are consequences of classical theorems involving local hyperbolic dynamics
and normal forms. To summarize, we first show that $\Gamma$ is an analytic curve at the origin as a consequence of the Stable Manifold Theorem. Moreover,  some manipulations regarding the Poincar\'{e}-Dulac normal form allow us to show that
$\Gamma$ is either non-singular or a cusp $y^{p} = x^{q}$ in some coordinates and that, in this last case, $F$ is analytically linearizable. In the attracting case $|(F|_{\G})'(0)|<1$, we obtain, as an application of Hartman-Grobman Theorem, that all stable orbits of $F$ which are asymptotic to $\Gamma$ are contained in $\Gamma$. This result forbids the existence of two-dimensional stable manifolds formed by orbits asymptotic to $\Gamma$,
that can appear in the parabolic case $\lambda_{\Gamma}=1$ as we shall see in Section~\ref{sec:node-direction}. Finally, we characterize the attracting hyperbolic case as the unique where there exists an analytic curve at the origin which is a stable set.

\begin{proposition}
\label{pro:anahyp1}
Let $\Gamma$ be a formal invariant curve of $F \in \Diff\Cd2$.
Suppose that $\spec (DF (0)) = \{ \lambda ({\Gamma}), \mu \}$ where the tangent eigenvalue $\lambda(\G)$ satisfies
$|\lambda ({\Gamma})|< \min (1, |\mu|)$.
Then $\Gamma$ is a non-singular analytic curve. Moreover it is the unique formal periodic curve
whose tangent line is not the eigenspace associated to $\mu$.
\end{proposition}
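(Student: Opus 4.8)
The plan is to produce a single canonical non-singular analytic invariant curve $W$ tangent to the $\lambda(\G)$-eigenspace and then to show that $W$ is the \emph{only} formal periodic curve with that tangent direction; both assertions then follow at once. Throughout I would write $\lambda=\lambda(\G)$ and choose linear coordinates $(x,y)$ in which $DF(0)=\mathrm{diag}(\lambda,\mu)$, which is possible since $|\lambda|<|\mu|$ forces $\lambda\neq\mu$. The eigenline associated to $\lambda$ is $E_\lambda=\{y=0\}$ and the one associated to $\mu$ is $E_\mu=\{x=0\}$. By Proposition~\ref{pro:invariant} the tangent line of any periodic curve is an invariant line of a power $DF(0)^m=D(F^m)(0)$, whose eigenlines are again $E_\lambda$ and $E_\mu$ because $|\lambda^m|<|\mu^m|$; hence ``tangent line not equal to $E_\mu$'' means exactly ``tangent to $E_\lambda$'', and $\G$ itself is tangent to $E_\lambda$.

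First I would construct $W$ as a graph $y=\phi(x)$, $\phi(x)=\sum_{n\ge2}\phi_nx^n$. Writing $F=(f,g)$ with $f=\lambda x+\cdots$, $g=\mu y+\cdots$, invariance is the functional equation $\phi(f(x,\phi(x)))=g(x,\phi(x))$, and comparing coefficients of $x^n$ yields a recursion of the shape $(\lambda^{n}-\mu)\phi_n=(\text{polynomial in }\phi_2,\dots,\phi_{n-1}\text{ and the jets of }F)$. The non-resonance $\mu\neq\lambda^{n}$ holds for every $n\ge2$, since $|\lambda^{n}|=|\lambda|^{n}\le|\lambda|^{2}<|\lambda|<|\mu|$; thus each $\phi_n$ is uniquely determined and there is exactly one formal non-singular invariant curve tangent to $E_\lambda$. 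Its convergence follows from a routine majorant estimate, because the divisors satisfy $|\lambda^{n}-\mu|\ge|\mu|-|\lambda|^{2}>0$ uniformly in $n$ (there are no small divisors); equivalently, $W$ is the Taylor expansion of the analytic (strong) stable manifold furnished by the Stable Manifold Theorem, available here because $\lambda$ is the dominant contracting eigenvalue, strictly separated from $\mu$. This already yields a non-singular analytic invariant curve $W$ tangent to $E_\lambda$.

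The crux is the uniqueness statement, namely excluding \emph{singular} (cusp-type) periodic curves tangent to $E_\lambda$, where a naive leading-order comparison is obstructed by the pure powers of $x$ appearing in $g$. The device I would use is to straighten $W$: after the analytic, tangent-to-identity change $y\mapsto y-\phi(x)$ we may assume $W=\{y=0\}$ is invariant, so that $g(x,0)\equiv0$ and every monomial of $g$ carries at least one factor $y$. Now let $\G'$ be any formal invariant curve tangent to $E_\lambda$ with $\G'\neq W$, take an irreducible parametrization with $F\circ\g=\g\circ\theta$, $\theta(s)=\lambda_{\G'}s+\cdots$, and set $\nu=\ord_s\g_1$, $m=\ord_s\g_2$; tangency to $E_\lambda$ gives $m>\nu$, and Lemma~\ref{lm:restricted-eigenvalue} gives $\lambda_{\G'}^{\nu}=\lambda$. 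Since every term of $g$ contains a $y$, one computes $g(\g(s))=\mu\g_2(s)+O(s^{m+1})$, while the second coordinate of $\g\circ\theta$ equals $\lambda_{\G'}^{m}\g_2(s)+O(s^{m+1})$; comparing the coefficients of $s^{m}$ forces $\mu=\lambda_{\G'}^{m}$. Hence $|\mu|=|\lambda|^{m/\nu}<|\lambda|<|\mu|$, because $0<|\lambda|<1$ and $m/\nu>1$, a contradiction. Therefore the only formal invariant curve tangent to $E_\lambda$ is $W$.

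Finally I would upgrade this to the periodic case: if $\G_0$ is $m$-periodic and tangent to $E_\lambda$, it is $F^m$-invariant, and the hypothesis is inherited by $F^m$ since $|\lambda^m|<\min(1,|\mu^m|)$; moreover $W$, being $F$-invariant, is a non-singular analytic $F^m$-invariant curve tangent to $E_\lambda$, so the argument of the previous paragraph applied to $F^m$ yields $\G_0=W$. In particular $\G=W$ is non-singular and analytic, and $W$ is the unique formal periodic curve whose tangent line is not $E_\mu$. I expect the main obstacle to be precisely the exclusion of singular invariant curves: without first producing $W$ and using it as a coordinate axis, the resonance $\mu=\lambda_{\G'}^{m}$ is masked at low order by the terms $g(x,0)$, and a direct leading-order comparison only yields $m\ge2\nu$ rather than a contradiction.
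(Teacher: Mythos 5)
Your proof is correct, but it takes a genuinely different route from the paper's. The paper argues by induction on blow-ups: using the invariance of the inner eigenvalue under blow-up together with Lemma~\ref{lm:restricted-eigenvalue}, it shows that the spectrum at the $j$-th infinitely near point is $\{\lambda,\mu/\lambda^{j}\}$ (equation~\eqref{equ:varspec}) and that each strict transform of any periodic curve avoiding the $\mu$-eigenspace must be tangent to the $\lambda$-eigendirection; hence the entire sequence of infinitely near points is dictated by $F$, which gives uniqueness at once, and non-singularity follows because these tangent lines are never along the exceptional divisor. Analyticity is then obtained from the Stable Manifold Theorem, exactly as in your second paragraph. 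You avoid blow-ups entirely: you build the non-singular formal curve $W$ by the non-resonance recursion $(\lambda^{n}-\mu)\phi_n=\cdots$ (valid since $|\lambda|^{n}<|\lambda|<|\mu|$ for $n\geq 2$), identify it with the analytic strong stable manifold, and then eliminate all competitors---including singular ones---by straightening $W$ to $\{y=0\}$ and extracting from the parametrization equation the resonance $\mu=\lambda_{\G'}^{m}$, which contradicts $|\lambda(\G)|<\min(1,|\mu|)$ because $m>\nu$ and $|\lambda_{\G'}|^{\nu}=|\lambda(\G)|<1$. Your straightening device plays exactly the role that the blow-up induction plays in the paper: both are mechanisms for excluding singular invariant curves, and you correctly diagnose in your last paragraph why a naive comparison fails without it. The paper's version has the advantage of reusing machinery (\eqref{equ:varspec} is recycled in Propositions~\ref{pro:anahyp} and~\ref{pro:hypsad}), while yours is more self-contained and makes the arithmetic obstruction explicit; both handle the periodic case the same way, by passing to $F^{m}$. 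The only soft spot is your claim that convergence of $\phi$ follows from a ``routine majorant estimate''; this is plausible since the divisors $|\lambda^{n}-\mu|$ are uniformly bounded below, but it is never actually needed, as your fallback---citing the Stable Manifold Theorem and invoking formal uniqueness to identify $W$ with the Taylor expansion of the strong stable manifold---is precisely what the paper does.
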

\begin{proof}
Set $\lambda=\lambda(\G)$, and denote by $\{p_n\}_{n\ge0}$ the sequence of infinitely near points of $\G$. To prove the uniqueness statement, we will show that the sequence $\{p_n\}$ depends only on $F$.
The eigenvalues $\lambda$ and $\mu$ are different, thus there are two eigenspaces of dimension $1$.
Since $\Gamma$ is not tangent to the eigenspace associated to $\mu$ by hypothesis, it follows
that the tangent line of $\Gamma$ is the eigenspace of $DF(0)$ associated to $\lambda$. In particular such
direction, and then $p_1$, depend only on $DF (0)$.
If $F_{p_1}$ is the transform of $F$ at $p_1$, we have that $\spec (DF_{p_1}(p_1)) = \{ \lambda, \mu/ \lambda \}$. If $\G_1$ is the strict transform of $\G$, then by the invariance of the inner eigenvalue under blow-ups $|\lambda_{\Gamma_1}|=|\lambda_{\Gamma}|<1$ and, since  $\lambda (\Gamma_1)$ is a power of
$\lambda_{\Gamma_1}$ and $|\mu / \lambda | > 1$, it follows that
$\lambda = \lambda (\Gamma_1)$.
Therefore $\Gamma_1$ is tangent to the eigenspace of $DF_{p_1}(p_1)$ associated to $\lambda$
and hence $p_2$ depends only on $DF_{p_1}(p_1)$ and then on $F$.  By induction, denoting by $F_{p_{j+1}}$ the transform of $F_{p_j}$ and by $\G_{j+1}$ the strict transform of $\G_j$, we obtain that
\begin{equation}
\label{equ:varspec}
\spec (DF_{p_{j+1}}(p_{j+1})) = \left\{ \lambda, \frac{\mu}{\lambda^{j+1}} \right\}
\end{equation}
and then $\lambda$ is the eigenvalue
associated to the tangent line of $\Gamma_{j+1}$ at $p_{j+1}$ for any $j \geq 0$. In particular, the sequence $\{p_n\}_{n}$ of infinitely near points of $\Gamma$ depends only on $F$. Moreover, since the tangent line of  $\Gamma_j$ is not tangent to the exceptional divisor for all $j$, it follows that $\Gamma$ is non-singular.

Since $|\lambda^{p}|< \min (1, |\mu^{p}|)$, the curve $\Gamma$ is the unique
formal $F^{p}$-invariant curve that is not tangent to
the eigenspace of $\mu$
for any $p \in {\mathbb N}$.
Moreover the properties  $|\lambda| < 1$ and $|\lambda| < |\mu|$ imply that
$\Gamma$ is a non-singular analytic curve by the
Stable Manifold Theorem \cite[Theorem 6.1]{Rue}.
\end{proof}

Next we see that
any formal hyperbolic invariant curve can be reduced to the setting of Proposition \ref{pro:anahyp1}
via blow-ups.
\begin{proposition}
\label{pro:anahyp}
Let $\Gamma$ be a formal hyperbolic invariant curve of $F \in \Diff\Cd2$. Then $\Gamma$ is an analytic curve.
\end{proposition}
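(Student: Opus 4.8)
The plan is to reduce the general hyperbolic situation to the hypotheses of Proposition~\ref{pro:anahyp1} by blowing up finitely many times along $\G$, and then to transport the analyticity of the resulting strict transform back to $\G$ by blowing down. Three preliminary reductions make the setting clean. First, since $\G$ is invariant for $F$ it is also invariant for $F^{-1}$, and the inner eigenvalue of $F^{-1}|_\G=(F|_\G)^{-1}$ equals $1/\lambda_\G$; as being analytic is a property of $\G$ alone, we may replace $F$ by $F^{-1}$ if necessary and assume $|\lambda_\G|<1$ (attracting case). Second, being analytic is stable under blow-down: if the strict transform of $\G$ by a composition $\pi$ of blow-ups admits a convergent parametrization $\wt\g(s)$, then $\g(s)=\pi\circ\wt\g(s)$ is a convergent parametrization of $\G$, so it suffices to prove that some strict transform of $\G$ is analytic. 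Third, by reduction of singularities of formal curves, after finitely many blow-ups along the infinitely near points of $\G$ the strict transform is non-singular; replacing $\G$ by that transform, we may assume from the start that $\G$ is non-singular, so that $\lambda:=\lambda_\G=\lambda(\G)$ with $|\lambda|<1$.

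Next I would track the eigenvalues along the blow-ups. Let $\{p_j\}$ be the sequence of infinitely near points of $\G$, and let $\G_j$ and $F_{p_j}$ denote the successive strict transforms of $\G$ and transforms of $F$; each $\G_j$ is again non-singular, being the strict transform of a non-singular curve. Since the inner eigenvalue is invariant under blow-ups we have $\lambda_{\G_j}=\lambda$, and by Lemma~\ref{lm:restricted-eigenvalue} with $\nu=1$ the tangent eigenvalue satisfies $\lambda(\G_j)=\lambda$ as well; hence at every stage $\G_j$ is tangent to the eigendirection of $DF_{p_j}(p_j)$ associated with $\lambda$. Writing $\spec(DF(0))=\{\lambda,\mu\}$ at the initial (non-singular) stage and applying the blow-up computation \eqref{eq:blow-up-F} along the $\lambda$-eigendirection at each step, an induction gives
\begin{equation*}
\spec\bigl(DF_{p_j}(p_j)\bigr)=\left\{\lambda,\ \frac{\mu}{\lambda^{j}}\right\},
\end{equation*}
a formula valid regardless of any coincidence of the two eigenvalues at intermediate stages. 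Since $|\lambda|<1$, we have $|\mu/\lambda^{j}|\to\infty$, so we may fix an index $N$ with $|\mu/\lambda^{N}|>1$.

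To conclude, I would apply Proposition~\ref{pro:anahyp1} at stage $N$. There $\spec(DF_{p_N}(p_N))=\{\lambda,\wt\mu\}$ with $|\lambda|<\min(1,|\wt\mu|)$, and the tangent line of $\G_N$ is the eigenspace of $\lambda\neq\wt\mu$, hence not the eigenspace associated to $\wt\mu$. Thus Proposition~\ref{pro:anahyp1} applies to $F_{p_N}$ and $\G_N$ and yields that $\G_N$ is a non-singular analytic curve. Blowing down as in the second reduction above, $\G$ is analytic.

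The main obstacle I expect is the bookkeeping of the eigenvalues along the tower of blow-ups: one must guarantee that the strict transform stays tangent to the $\lambda$-eigendirection at every step, so that the recursion for the second eigenvalue reads $\mu_{j+1}=\mu_j/\lambda$ and never involves the eigenvalue attached to the exceptional divisor, and that this persists through any transient stage where the two eigenvalues of an intermediate transform happen to coincide. Both points are controlled by the invariance of the inner eigenvalue together with the identity $\lambda(\G_j)=\lambda_{\G_j}$ for non-singular $\G_j$, which pins the tangent direction of $\G_j$ to the $\lambda$-eigenspace independently of the modulus of the other eigenvalue.
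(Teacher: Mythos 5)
Your proposal is correct and follows essentially the same route as the paper: replace $F$ by $F^{-1}$ to reduce to the attracting case, desingularize $\G$ so that inner and tangent eigenvalues coincide and stay equal to $\lambda$ along the tower of blow-ups, track the second eigenvalue as $\mu/\lambda^{j}$, and apply Proposition~\ref{pro:anahyp1} once $|\mu/\lambda^{j}|$ exceeds the required threshold, then blow down. The only difference is that you spell out the justifications (invariance of the inner eigenvalue, Lemma~\ref{lm:restricted-eigenvalue} in the non-singular case, stability of analyticity under blow-down) that the paper leaves implicit.
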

\begin{proof}
Suppose $\mathrm{spec} (DF(0)) = \{\lambda, \mu\}$ with $\lambda = \lambda ({\Gamma})$.
We can suppose $|\lambda|<1$ up to replacing $F$ with $F^{-1}$ if $|\lambda|>1$. Also, by reduction of singularities, we may assume that $\Gamma$ is non-singular. Thus, using the same notations as in the proof of Proposition \ref{pro:anahyp1}, $\lambda_{\Gamma_j}=\lambda$ for any $j$ and the divisor at $p_j$ has inner eigenvalue $\mu/\lambda^j$. Take $j \in {\mathbb N}$ such that $|\lambda|<|\mu/\lambda^{j}|$.
Therefore $\Gamma_j$ and then $\Gamma$ are analytic by Proposition \ref{pro:anahyp1}.
\end{proof}

\begin{cor}
Let $\Gamma$ be an analytic curve at the origin that is a stable set for $F \in \Diff\Cd2$.
Then $|\lambda ({\Gamma})| < 1$.
\end{cor}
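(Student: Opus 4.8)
The plan is to reduce the statement to the one‑dimensional dynamics of the restriction $F|_\G$ and then invoke Lemma~\ref{lm:restricted-eigenvalue}. First I would use that $\G$ is analytic to fix a convergent irreducible parametrization $\g(s)=(\g_1(s),\g_2(s))$; for a small disk $D=\{|s|<\epsilon\}$ the normalization $\g$ is a homeomorphism onto a representative of $\G$, with $\g(0)=0$. Since a stable set is in particular invariant, Proposition~\ref{pro:invariant}(b) yields $\theta(s)\in\C[[s]]$ with $\theta(0)=0$, $\theta'(0)=\lambda_\G$, and $F\circ\g=\g\circ\theta$. Because $F$ and $\g$ are convergent and $\g$ is a homeomorphism onto $\G$, the germ $\theta=\g^{-1}\circ F\circ\g$ is holomorphic and bounded on the punctured disk, hence extends (removable singularity) to a genuine convergent one‑dimensional diffeomorphism germ with multiplier $\lambda_\G$.

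Next I would transfer the stable‑set hypothesis to $\theta$. For $s_0\in D$ the $F$‑orbit of $\g(s_0)$ is $\{\g(\theta^n(s_0))\}_n$, and since $\G$ is a stable set this orbit converges to $0=\g(0)$. For $n$ large the orbit points lie in the chart $\g(D)$, so $\theta^n(s_0)=\g^{-1}(\g(\theta^n(s_0)))\to\g^{-1}(0)=0$ by continuity of $\g^{-1}$ at the origin. Thus every orbit of $\theta$ in $D$ tends to $0$; in other words, the \emph{whole} neighborhood $D$ is a one‑dimensional stable set for $\theta$.

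The conclusion then follows from the classification of local one‑dimensional dynamics recalled in the introduction, according to which a full neighborhood of $0\in\C$ is a stable set of a germ $\theta$ only in the hyperbolic attracting case. Indeed, if $|\lambda_\G|>1$ the origin is repelling; if $|\lambda_\G|=1$ with $\lambda_\G$ not a root of unity, only the origin converges by P\'erez‑Marco \cite{Per}; if $\lambda_\G$ is a root of unity and $\theta$ is periodic, the non‑trivial orbits are periodic and stay away from $0$; and if $\theta$ is rationally neutral and non‑periodic, the repelling petals of the Leau--Fatou flower \cite{Lea,Fat} keep a neighborhood from being stable. Hence $|\lambda_\G|<1$, and Lemma~\ref{lm:restricted-eigenvalue} gives $|\lambda(\G)|=|\lambda_\G|^{\nu}<1$, where $\nu=\nu(\G)$ is the multiplicity of $\G$.

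The only genuinely delicate point is the transfer step of the second paragraph: one must argue that convergence of the orbit on the analytic curve is equivalent to convergence of the parameter orbit, which is exactly where the properness and homeomorphism property of the normalization $\g$ enters. Everything else is a direct appeal to the one‑dimensional theory together with the relation $(\lambda_\G)^\nu=\lambda(\G)$.
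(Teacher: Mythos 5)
Your first two steps (realizing $F|_\G$ as an honest holomorphic germ $\theta\in\Diff(\C,0)$ via the convergent parametrization, and transferring the stable-set hypothesis so that a full neighborhood $D$ of $0\in\C$ is invariant with all $\theta$-orbits converging to $0$) are correct and coincide with the paper's setup, as does the final appeal to Lemma~\ref{lm:restricted-eigenvalue}. The gap is in your third paragraph, specifically in the rationally neutral non-periodic case. The facts recalled in the introduction do not cover it (there they assert only that the attracting petals \emph{are} stable manifolds; that case is excluded from the list of cases where the origin is the only stable manifold), and your substitute argument --- ``the repelling petals keep a neighborhood from being stable'' --- does not work as stated. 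What repelling petals give is Lyapunov instability: there are a constant $c>0$ and points arbitrarily close to $0$ (backward iterates $\theta^{-n}(w)$ of a point $w$ in a repelling petal with $|w|$ of size $c$) whose forward orbits reach modulus $c$. This contradicts the stable-set hypothesis only if the invariant neighborhood $D$ omits such a point $w$; but $D$ is \emph{given}, it may well contain the entire region where the petals live, and then nothing prevents those orbits from making their excursion inside $D$ and converging to $0$ afterwards. Orbits converging to $0$ in both forward and backward time while staying in a small disk do exist (take any point in the overlap of an attracting and a repelling petal, e.g. $z=i\varepsilon$ for $\theta(z)=z/(1+z)$, whose full orbit is $i\varepsilon/(1+ni\varepsilon)$), so one cannot argue that points fed into $D$ from a repelling petal must fail to converge. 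Nor can you shrink $D$: forward invariance is not inherited by $D\cap D_\delta$, and producing arbitrarily small invariant stable neighborhoods is precisely Lyapunov stability, which fails at a parabolic point --- the argument would be circular.

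The paper closes this case --- and in fact all cases at once, making Koenigs, P\'erez-Marco and Leau--Fatou unnecessary --- by a soft normal-families argument: the iterates $\{\theta^n\}$ are uniformly bounded on $D$, hence form a normal family; since they converge pointwise to $0$, they converge to $0$ uniformly on compact subsets of $D$; Cauchy's integral formula then gives $(\theta^n)'(0)=\lambda_\G^n\to 0$, which forces $|\lambda_\G|<1$, and Lemma~\ref{lm:restricted-eigenvalue} yields $|\lambda(\G)|<1$. If you replace your case analysis by this argument (or insert it to settle the parabolic case), your proof becomes complete; as written, the parabolic case is a genuine gap.
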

\begin{proof}
The result is a consequence of the analogous one for the one-dimensional diffeomorphism $f=F|_{\Gamma}$.
Up to conjugacy we can suppose $f\in\Diff(\C,0)$.
Let $U$ be a bounded open neighborhood of the origin that is a stable set for $f$.
Cauchy's integral formula implies that the sequence of derivatives of the sequence $\{f^{n}\}_{n \geq 1}$ is
uniformly bounded in compact subsets of $U$. Thus the sequence $\{f^{n}\}_{n \geq 1}$ is normal and as a consequence
$\{f^{n}\}_{n \geq 1}$ converges to $0$ uniformly in compact subsets of $U$.
Another application of Cauchy's integral formula shows $\lim_{n \to \infty} (f^{n})' (0)=0$.
Since $(f^{n})' (0) = \lambda_{\Gamma}^{n}$, we deduce $|\lambda_{\Gamma}|<1$.
\end{proof}

As a consequence of Proposition \ref{pro:anahyp},
we see that the unique asymptotic manifold associated to a hyperbolic invariant curve is the curve itself.
\begin{proposition}
\label{pro:hypsad}
Let $\Gamma$ be an invariant curve of $F \in \Diff\Cd2$ with $|\lambda ({\Gamma})|< 1$.
Then any stable orbit of $F$ asymptotic to $\Gamma$ is contained in $\Gamma$.
\end{proposition}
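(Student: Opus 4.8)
The plan is to blow up along $\G$ until the relevant infinitely near point becomes a hyperbolic saddle whose local stable manifold is exactly the strict transform of $\G$, and then to use the Hartman--Grobman Theorem to force the lifted orbit onto that stable manifold. First I would note that both the hypothesis and the conclusion are stable under blow-up and blow-down along $\G$: by Lemma~\ref{lm:restricted-eigenvalue} the condition $|\lambda(\G)|<1$ is equivalent to $|\lambda_\G|<1$, and $\lambda_\G$ is invariant under blow-ups, while being asymptotic to $\G$ (Definition~\ref{def:asymptotic-stable-manifolds}) and being contained in $\G$ are both transferred between $F$ and its transforms through the strict transform. Hence, after reduction of singularities I may assume $\G$ is non-singular, so that $\lambda(\G_j)=\lambda_{\G_j}=\lambda_\G=:\lambda$ for every $j$ and, exactly as in the proofs of Propositions~\ref{pro:anahyp1} and \ref{pro:anahyp}, the transform $F_{p_j}$ at the $j$-th infinitely near point satisfies $\spec(DF_{p_j}(p_j))=\{\lambda,\mu/\lambda^{j}\}$ (cf. \eqref{equ:varspec}). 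Since $|\lambda|<1$ we have $|\mu/\lambda^{j}|\to\infty$, so I would fix $j$ large enough that $\G$ is non-singular at $p_j$ and simultaneously $|\mu/\lambda^{j}|>1$; then $p_j$ is a hyperbolic saddle for $F_{p_j}$, with one eigenvalue inside and one outside the unit circle.

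Next I would identify $\G_j$ as the local stable manifold of this saddle. The curve $\G_j$ is analytic by Proposition~\ref{pro:anahyp}, invariant, tangent to the $\lambda$-eigenspace, and $F_{p_j}|_{\G_j}$ has multiplier $\lambda$ with $|\lambda|<1$, so every orbit inside $\G_j$ converges to $p_j$; by the uniqueness part of the Stable Manifold Theorem \cite[Theorem 6.1]{Rue} this identifies $\G_j=W^{s}_{\mathrm{loc}}(F_{p_j},p_j)$. On the other hand, letting $\sigma$ denote the composition of the first $j$ blow-ups, the hypothesis that $O$ is asymptotic to the irreducible curve $\G$ means that the $\omega$-limit of the lifted sequence $\sigma^{-1}(O)$ reduces to the single point $p_j$; hence $\sigma^{-1}(O)$ converges to $p_j$ and is eventually a forward orbit of $F_{p_j}$ converging to this saddle.

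Finally I would apply the Hartman--Grobman Theorem. In suitable coordinates $F_{p_j}$ is topologically conjugate to $L=\mathrm{diag}(\lambda,\mu/\lambda^{j})$, and the forward orbits of $L$ converging to the origin are exactly those lying on the $\lambda$-axis, because $|\mu/\lambda^{j}|>1$ forces the second coordinate to diverge otherwise. Transporting this through the conjugacy shows that the tail of $\sigma^{-1}(O)$ lies in the local stable set $W^{s}_{\mathrm{loc}}(F_{p_j},p_j)=\G_j$; blowing down, the germ at $0$ of $O$ lies in $\G$, as desired. I expect the main obstacle to be the bookkeeping of this reduction rather than the dynamical input: one must verify carefully that ``asymptotic to $\G$'' genuinely forces $\sigma^{-1}(O)\to p_j$, so that the orbit is a true stable orbit of the saddle $F_{p_j}$, and that the identification $\G_j=W^{s}_{\mathrm{loc}}$ is legitimate. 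Once the geometry has been arranged so that $p_j$ is a hyperbolic saddle, the combination of Hartman--Grobman with the Stable Manifold Theorem is routine.
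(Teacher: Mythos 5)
Your proposal is correct and follows essentially the same route as the paper's proof: blow up along $\G$ until \eqref{equ:varspec} yields a hyperbolic saddle at $p_j$ (i.e.\ $|\mu/\lambda^{j}|>1$), observe that asymptoticity forces the lifted orbit to converge to $p_j$, and use Hartman--Grobman to conclude that the lifted orbit lies in $\G_j$, hence $O\subset\G$. The only difference is presentational: you make explicit the identification $\G_j=W^{s}_{\mathrm{loc}}(F_{p_j},p_j)$ via the uniqueness part of the Stable Manifold Theorem, a step the paper leaves implicit when it asserts that the only orbits of $F_{p_j}$ converging to $p_j$ are those contained in $\G_j$.
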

\begin{proof}
Using the arguments of Proposition~\ref{pro:anahyp} and the notations of the proof of Proposition \ref{pro:anahyp1}, consider $j\in\N$ such that $|\mu/\lambda^{j}|>1$. Equation \eqref{equ:varspec} and Hartman-Grobman theorem imply that the unique orbits of $F_{p_j}$ that converge to $p_j$ are those contained in $\Gamma_j$.
Consider a point $q$ whose orbit $O=\{F^n(q)\}_n$ is  asymptotic to $\Gamma$, and denote by $\pi_l:M_l\to M_{l-1}$ the blow-up at $p_{l-1}$ for $1\le l\le j$, where $M_0=\C^2$ and $p_0=0$. Since $O$ is asymptotic to $\G$, $(\pi_{1} \circ \cdots \circ \pi_{j})^{-1}(F^n(q))$ tends to $p_j$ when $n \to \infty$, so $(\pi_1\circ\cdots\circ\pi_j)^{-1}(O)\subset\Gamma_j$ and therefore $O$ is contained in $\Gamma$.
\end{proof}
\begin{remark}
\label{rem:nasm2}
Let $\Gamma$ be an invariant curve of $F \in \Diff\Cd2$ with $|\lambda ({\Gamma})| < 1$.
Even if there are no asymptotic stable manifolds of dimension $2$, let us consider the
``closest" case. This is the (hyperbolic)
node case, corresponding to $|\mu| < |\lambda ({\Gamma})| <1$. In this case, the tangent line $\ell$ of
$\Gamma$ is an attractor for the dynamics induced by $DF(0)$ in the space
$\P_{\C}^1$ of directions. In fact, the origin is an attractor for the map $F$ and any orbit converges to the
origin with tangent $\ell$. Of course, this convergence is not asymptotic since the hierarchy of the eigenvalues is disrupted by
blow-up. More precisely, the inequality $|\mu| < |\lambda ({\Gamma})|$ is not stable by blow-up.
Indeed such property is key in the proof of Proposition \ref{pro:hypsad}. On the other hand, in the $\G$-parabolic case $\lambda({\Gamma})=1$,
since the inner eigenvalue is stable under blow-ups, the tangent eigenvalue of $\G$ and all of its strict transforms are equal to 1, and then equation \eqref{equ:varspec} implies that the inequality $|\mu|<1$ is preserved under blow-ups at the infinitely near points of $\Gamma$.
Then, with the notations of Proposition \ref{pro:anahyp1}, the tangent line $\ell_{j}$ of $\Gamma_{j}$ is always an attractor for the action induced by
$DF_{p_j}(p_j)$ in the space of directions at $p_j$ for $j \geq 0$.
Since all the iterated tangents are attractors, it becomes possible to find
open stable manifolds in which all the orbits are asymptotic to $\Gamma$ (we will show in Section~\ref{sec:node-direction} that they actually exist). Let us remark that asymptotic convergence is
not necessarily related to the dynamics of $DF(0)$, for instance it can also happen in the case
$\lambda ({\Gamma})=1$ and $|\mu|=1$  as we will see in the next sections.
\end{remark}
The next result shows that if a formal hyperbolic invariant curve is singular, then
both the dynamics and the curve are very special.
\begin{proposition}
\label{pro:nsis}
Let $\Gamma$ be a hyperbolic invariant curve of $F \in \Diff\Cd2$.
Suppose that $\Gamma$ is singular.
Then there exist coprime natural numbers $q>p>1$ such that,
up to an analytic change of coordinates, we have that
$F(x,y) = (\lambda ({\Gamma}) x, \mu y)$, where $\lambda ({\Gamma})^{q} = \mu^{p}$, and that
$\Gamma$ is the curve $y^{p} = x^{q}$.
\end{proposition}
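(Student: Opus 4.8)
The plan is to show that a singular hyperbolic invariant curve forces a multiplicative resonance between the eigenvalues, and that this resonance is exactly of the type that makes $F$ analytically linearizable with $\G$ a cusp. By Proposition~\ref{pro:anahyp} the curve $\G$ is already analytic, and replacing $F$ by $F^{-1}$ if necessary we may assume $|\lambda_\G|<1$. Fix analytic coordinates $(x,y)$ in which the tangent line of $\G$ is the $x$-axis, so that $\lambda(\G)$ is the eigenvalue of $DF(0)$ along $x$ and $\mu$ is the other one; by Lemma~\ref{lm:restricted-eigenvalue} we have $\lambda(\G)=\lambda_\G^{\nu}$, where $\nu=\nu(\G)\geq 2$ since $\G$ is singular.

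First I would extract a multiplicative relation between $\lambda(\G)$ and $\mu$ by following the reduction of singularities of $\G$ and tracking the eigenvalues of the successive transforms of $F$ at the infinitely near points of $\G$, using the exact blow-up rule \eqref{eq:blow-up-F} together with the invariance of $\lambda_\G$. As long as the strict transform stays transverse to the exceptional divisor the computation is exactly \eqref{equ:varspec}, and the tangent eigenvalue remains a power of $\lambda_\G$; since $\G$ is singular, however, its resolution necessarily contains a satellite point, where the tangent line of the strict transform coincides with the exceptional divisor. At such a point the tangent eigenvalue, which is again a power of $\lambda_\G$ by Lemma~\ref{lm:restricted-eigenvalue}, must coincide with the eigenvalue attached to the divisor, which the blow-up rule writes as a monomial in $\lambda(\G)$ and $\mu$. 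Equating the two and using that $\lambda(\G)$ is itself a power of $\lambda_\G$ forces $\mu$ to be an integer power of $\lambda_\G$ as well; writing the resulting relation in lowest terms gives coprime $p,q$ with $\lambda(\G)^q=\mu^p$. That $\G$ is singular and tangent to the $x$-axis then rules out $p=1$ and $q=1$ (either would make the only admissible invariant branch a smooth graph over a coordinate axis), so $q>p>1$. This eigenvalue bookkeeping along the resolution, and the verification that the reduced exponents are both at least $2$, is the step I expect to be the main obstacle.

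Granting the relation $\lambda(\G)^q=\mu^p$, the conclusion follows from normal-form theory. Since $|\lambda_\G|<1$, the inner eigenvalue is not a root of unity, hence neither are $\lambda(\G)$ and $\mu$ (both powers of it); therefore the lattice $\{(a,b)\in\Z^2:\lambda(\G)^a\mu^b=1\}$ has rank one, and being generated by the primitive vector $(q,-p)$ it equals $\Z(q,-p)$. A direct check shows that, because $p,q\geq 2$, no element of this lattice yields a Poincar\'e--Dulac resonant monomial $\lambda_i=\lambda(\G)^{m_1}\mu^{m_2}$ with $m_1,m_2\geq 0$ and $m_1+m_2\geq 2$; note also that $q\neq p$ forces $\lambda(\G)\neq\mu$, so $DF(0)$ is diagonalizable. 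As $|\lambda(\G)|=|\lambda_\G|^{\nu}<1$ and $|\mu|=|\lambda_\G|^{q}<1$ place $F$ in the Poincar\'e domain with no resonances, $F$ is analytically conjugate to its linear part $(x,y)\mapsto(\lambda(\G)x,\mu y)$. Finally, in these coordinates $x^q/y^p$ is a first integral (since $\lambda(\G)^q=\mu^p$), so the formal irreducible invariant curves are the two axes together with the level curves $\{x^q=cy^p\}$, $c\in\C^*$; the singular ones are exactly the latter, and an analytic linear scaling of $y$ normalizes $\G$ to $\{y^p=x^q\}$, which completes the proof.
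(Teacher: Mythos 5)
Your architecture is genuinely different from the paper's: you first extract the multiplicative relation $\lambda(\G)^q=\mu^p$ by eigenvalue bookkeeping along the resolution of $\G$, then linearize, then classify the invariant curves of the linear model. The first step is a viable alternative route (tracking the infinitely near points of $\G$, Lemma~\ref{lm:restricted-eigenvalue} and the invariance of $\lambda_{\G}$ give that the divisor eigenvalue at the $n$-th point is $\mu\lambda_{\G}^{-(\nu_0+\cdots+\nu_{n-1})}$, and at the first point where the strict transform is tangent to the divisor one obtains $\mu=\lambda_{\G}^{\nu_0+\cdots+\nu_n}$; note this is not ``exactly \eqref{equ:varspec}'', since for a singular curve the tangent eigenvalue changes with the multiplicities $\nu_j$). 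But the step you yourself flag as the main obstacle is a genuine gap, and it is circular as written: your only justification that $p,q>1$ (``either would make the only admissible invariant branch a smooth graph over a coordinate axis'') presupposes knowing what the invariant branches of $F$ are, and in your argument that classification is available only \emph{after} linearization, while your linearization step uses $p,q\ge 2$ as input (your lattice computation correctly shows that the Poincar\'e--Dulac resonances are exactly the cases $p=1$ or $q=1$). The point is not cosmetic: $p=1$ is precisely the resonance $\mu=\lambda(\G)^q$, in which case $F$ need not be linearizable, its normal form being $(\lambda x,\mu(y+x^q))$, and excluding a singular invariant curve for that map requires a separate argument. The paper does exactly this: since $|\mu|\le|\lambda(\G)|<1$ (the contrapositive of Proposition~\ref{pro:anahyp1}), $F$ is analytically conjugate to its Poincar\'e--Dulac normal form, and in the resonant case the Jordan--Chevalley decomposition shows that any invariant curve is invariant by the unipotent factor $\EXP\left(x^m\frac{\partial}{\partial y}\right)$, hence by the vector field $x^m\frac{\partial}{\partial y}$, forcing the curve to be $\{x=0\}$, which is smooth --- contradicting the singularity of $\G$.

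Two ways to repair your route: (i) import the paper's dichotomy, i.e.\ pass to the normal form first and kill the resonant case by the Jordan--Chevalley argument; then the parametrization computation on the linear model gives the relation, coprimality and $q>p>1$ all at once (this is essentially the paper's proof, making your blow-up bookkeeping unnecessary); or (ii) strengthen the bookkeeping to prove directly that $\nu_0$ does not divide $M=\nu_0+\cdots+\nu_n$: one can in fact show that $M$ equals the first characteristic (Puiseux) exponent of $\G$, which by definition is not divisible by the multiplicity $\nu_0$, but that identification is a real argument about how multiplicities and tangency to the divisor evolve under blow-up, and your sketch does not contain it. Two smaller points: ruling out $p=1$ and $q=1$ does not by itself yield $q>p$ (you need $M>\nu_0$, which your bookkeeping does provide since the first tangency occurs at some $n\ge1$); and your classification statement ``$x^q/y^p$ is a first integral, so the invariant curves are the axes and its level curves'' hides a short argument (e.g.\ that the first integral is constant along any invariant branch because $\lambda_{\G}$ is not a root of unity), though that part is standard.
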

\begin{proof}
We denote $\lambda = \lambda ({\Gamma})$ and $\mathrm{spec} (DF(0)) = \{\lambda, \mu\}$.
We can suppose $|\lambda|<1$ without loss of generality.
By Proposition \ref{pro:anahyp1}, we have $|\mu| \leq |\lambda|<1$.
Since the eigenvalues of $DF(0)$ have modulus less than $1$, the diffeomorphism $F$ is analytically
conjugated to its Poincar\'{e}-Dulac normal form (cf. \cite[Theorem 5.17]{Ily-Y}).
This normal form is either $F(x,y)=(\lambda x, \mu y)$ or $F(x,y)=(\lambda x, \mu (y + x^{m}))$, where $\mu = \lambda^{m}$. Let us show that the second case is impossible. Indeed, in that case
$$ F(x,y)  =  (\lambda x, \mu y)  \circ \EXP \left( x^{m} \frac{\partial}{\partial y} \right) =
\EXP \left( x^{m} \frac{\partial}{\partial y} \right)  \circ (\lambda x, \mu y)$$
is the Jordan-Chevalley decomposition of $F$ (see \cite{Rib}). Since any invariant curve of $F$ is also invariant by the unipotent part $F_u(x,y)=\EXP \left( x^{m} \frac{\partial}{\partial y} \right)$ and then by the vector field $X=x^{m}\frac{\partial}{\partial y}$ (cf. \cite[Propositions 2 and 3]{Rib}),
we deduce that $x=0$ is the unique $F$-invariant curve, which is impossible since $\Gamma$ is singular.

Let
$\gamma(s) = (s^p, \gamma_{2}(s))$ be an irreducible parametrization
of $\Gamma$, where
$\gamma_{2}(s) = \sum_{j=1}^{\infty} c_{j} s^{j}  \in {\mathbb C}[[s]]$.
Since $F ( \gamma (s)) =  (\lambda s^p, \mu \gamma_2(s))$ is again a parametrization of $\G$, we obtain that
$\mu\gamma_2(s)=\gamma_2(\lambda^{1/p}s)$, where $\lambda^{1/p}$ is a $p$-th root of $\lambda$.
Hence $c_{j} \neq 0$ implies $\mu^{p} = \lambda^{j}$
for any $j \in {\mathbb N}$.
We deduce that $\gamma(s)= (s^{p}, c_{q} s^{q})$ for some $q \in {\mathbb N}$
with $\lambda^{q} = \mu^{p}$. Since $\gamma$ is irreducible, $p$ and $q$ are coprime. Moreover, $q > p$, because $|\mu| < |\lambda|$, and $p>1$, because $\Gamma$ is singular. The curve $\Gamma$ is equal to $y^{p} = x^{q} c_{q}^{p}$, and then conjugated by a linear map $(x,y)\mapsto (\alpha x, y)$  to $y^{p} = x^{q}$.
\end{proof}
\begin{remark}
Let $\Gamma$ be a hyperbolic invariant curve of $F \in \Diff\Cd2$.
Then there exists a non-singular hyperbolic invariant curve $\Gamma'$ such that
$\lambda ({\Gamma}) = \lambda ({\Gamma'})$.
Indeed, if $\Gamma$ is singular then we can suppose
$F(x,y) = (\lambda ({\Gamma}) x, \mu y)$, by Proposition \ref{pro:nsis}, and then we define $\Gamma' = \{y=0\}$.
\end{remark}

\section{$\G$-parabolic case: reduction of the diffeomorphism}\label{sec:reduction}

Consider a diffeomorphism $F\in\Diff\Cd 2$ and a formal invariant curve $\G$ which is parabolic (i.e. $(F|_\G)'(0)=1$) and such that $F|_\G\neq\id$. Note that the tangent eigenvalue $\lambda(\G)$ is 1, by Lemma~\ref{lm:restricted-eigenvalue}, and put $\spec(DF(0))=\{1,\mu\}$.

\begin{definition}\label{def:reducedform}
We say that the pair $(F,\G)$ is {\em reduced} if $\G$ is non-singular and there exist coordinates
$(x,y)$ at $0\in\C^2$ such that $F$ is written as
\begin{align*}
x\circ F\,(x,y)&= x-x^{k+p+1}+O(x^{k+p+1}y,x^{2k+2p+1})\\
y\circ F\,(x,y)&=\mu\left[y+x^ka(x)y+O(x^{k+p+1}y)+b(x)\right],
\end{align*}
where $k\geq 1$, $p\geq 0$, $b(x)\in\C\{x\}$ and $a(x)$ is a polynomial of degree at most $p$ with $a(0)\neq 0$, and such that $\G$ has order of contact at least $k+p+2$ with the $x$-axis. The polynomial $\mu\left(1+x^ka(x)\right)$ is called the \emph{principal part} of the pair $(F,\G)$.
\end{definition}

Observe that the integers $k$ and $p$ are independent of the coordinates $(x,y)$, since $k+1$ is the order of $F$ and $k+p+1$ is the order of the formal diffeomorphism $F|_{\G}$.

\begin{remark}\label{rem:orderofcontact}
Suppose that $(F,\G)$ is reduced, with the same notations of Definition~\ref{def:reducedform}, and denote by $m\ge k+p+2$ the order of contact of $\G$ with the $x$-axis, so that $\G$ admits a parametrization of the form $\g(s)=\left(s,\g_2(s)\right)$, where the order of $\g_2(s)$ is  $m$. Then, the order $\nu_0(b)$ of $b$ at 0 satisfies $\nu_0(b)=m$, in the case $\mu\neq1$, and $\nu_0(b)\ge m+k$, in the case $\mu=1$.
\end{remark}

In this section, we will prove that there exists a finite sequence of changes of coordinates and blow-ups at the infinitely near points of $\G$ such that the pair $(\wt F,\wt\G)$, where $\wt F$ is the transform of $F$ and $\wt\G$ is the strict transform of $\G$, is reduced.

Observe first that, after a finite number of blow-ups centered at the infinitely near points of $\G$, we can assume that $\G$ is non-singular and transversal to the exceptional divisor, which is given by $\{x=0\}$ in some analytic coordinates $(x,y)$. In these coordinates, $\G$ admits a parametrization $\g(s)$ of the form $\g(s)=\left(s,\g_2(s)\right)$, with $\g_2(s)\in s\C[[s]]$.

Denote by $r+1\ge2$ the order of the formal diffeomorphism $F|_\G$ (which is well defined since $F|_\G\neq\id$) and consider the change of variables $(x,y)\mapsto \left(x, y-J_{r+1}\g_2(x)\right)$, where $J_l$ denotes the jet of order $l$. In the new coordinates, the curve $\G$ admits a parametrization
$\bar\g(s)=(s,\bar\g_2(s))$, where the order of $\bar\g_2$ is at least $r+2$. Since $F(s,\bar\g_2(s))=(\theta(s),\bar\g_2(\theta(s)))$ for some $\theta(s)=s+\alpha s^{r+1}+\cdots$ with $\alpha\neq0$, we conclude that $F$ is written in the new coordinates as
\begin{align*}
x\circ F(x,y)&=x+\alpha x^{r+1}+O(y,x^{r+2})\\
y\circ F(x,y)&=\mu\Bigl[y+y\sum_{j\ge1}c_jx^j+O(y^2,x^{r+2})\Bigr].
\end{align*}
Set $t=\min\{j\ge 1:c_j\neq0\}$, if the series $\sum_{j\ge1}c_jx^j$ does not vanish, and $t=\infty$ otherwise. Put $k=r$ and $p=0$ if $t\ge r$, and $k=t$ and $p=r-t$ otherwise. We have then
\begin{align*}
x\circ F(x,y)&=x+\alpha x^{k+p+1}+O(y,x^{k+p+2})\\
y\circ F(x,y)&=\mu\Bigl[y+cx^ky+O(x^{k+1} y, y^2,x^{k+p+2})\Bigr],
\end{align*}
where $k\ge1$, $p\ge0$, $\alpha\neq0$ and, if $p\ge1$, then $c\neq0$; moreover, the order of contact of $\G$ with the $x$-axis is at least $k+p+2$.

Consider now the sequence $\phi$ of blow-ups centered at the first $k+p+1$ infinitely near points of $\G$. Observe that each of these blow-ups increases the exponent of $x$ in every term in $x\circ F$ with positive degree in the variable $y$ and every term in $y\circ F$ with degree at least 2 in the variable $y$. Moreover, the coefficient $c$ does not change if $p\ge1$. Hence, the transform $\wt F$ of $F$ by $\phi$ is written in some coordinates $(x,y)$ as
\begin{align*}
x\circ \wt F(x,y)&=x+x^{k+p+1}\left(\alpha+O(x,y)\right)\\
y\circ \wt F(x,y)&=\mu\Bigl[y+ax^ky+O(x^{k+1}y,x^{k+p+1}y^2, x)\Bigr],
\end{align*}
where again $k\ge1$, $p\ge0$, $\alpha\neq0$ and, if $p\ge1$, then $a=c\neq0$. In these coordinates, the strict transform $\wt\G$ of $\G$ is parametrized by $\wt\g(s)=(s,\wt\g_2(s))$, where $\wt\g_2(s)$ has order at least 1. Finally, after a polynomial change of coordinates of the form $(x,y)\mapsto\left(\beta x+P(x),y-J_{k+p+1}\wt\g_2(x)\right)$, where $\beta\in\C^*$ and $P(x)\in x^2\C[x]$, we obtain
\begin{align*}
x\circ \wt F\,(x,y)&=x-x^{k+p+1}+O(x^{k+p+1}y,x^{2k+2p+1})\\
y\circ \wt F\,(x,y)&=\mu\left[y+x^ka(x)y+O(x^{k+p+1}y)+b(x)\right],
\end{align*}
where $k\ge1$, $p\ge0$, $a(x)$ is a polynomial of degree at most $p$ such that $a(0)\neq0$ in case $p\ge1$ and the order of contact of $\wt\G$ with the $x$-axis is at least $k+p+2$. Hence, $(\wt F,\wt \G)$ is reduced unless $a(0)=0$; in this case, necessarily $p=0$, and we get a reduction for $(\wt F,\wt \G)$ after a change of coordinates to increase by one unit the order of contact of $\wt\G$ with the $x$-axis and a blow-up.

\strut

Consider a reduced pair $(F,\G)$. We define the \emph{attracting directions} of $(F,\G)$ as the $k+p$ half lines $\xi\R^+$, where $\xi^{k+p}=1$. This definition is motivated by the following: when $\G$ is convergent, the one-dimensional diffeomorphism $F|_{\G}$ is of the form $F|_{\G}(x)=x-x^{k+p+1}+O(x^{k+p+2})$ so, by Leau-Fatou Flower Theorem, the real tangents of its orbits are exactly the attracting directions of $(F,\G)$, and we find stable manifolds of dimension one in sectors bisected by each one of them.

We will classify the attracting directions in two types as follows. Consider $A_0, A_1,...,A_p\in\C$ such that
$$\log\mu+x^k\left(A_0+A_1x+\cdots+A_px^p\right)=J_{k+p}\left(\log\left(\mu\left(1+x^ka(x)\right)\right)\right),$$
where $\mu\left(1+x^ka(x)\right)$ is the principal part of the pair $(F,\G)$. Note that $A_0=a(0)\neq0$. The polynomial $\log\mu+x^k\left(A_0+A_1x+\cdots+A_px^p\right)$ is called the \emph{infinitesimal principal part} of $(F,\G)$. Observe that, if we put $F_{\id}(x,y)=(x,\mu^{-1}y)\circ F(x,y)$, then $F_{\id}$ is tangent to the identity and the jet of order $k+p+1$ of its infinitesimal generator $X$ is exactly
$$J_{k+p+1}X=-x^{k+p+1}\parc x+x^k\left(A_0+A_1x+\cdots+A_px^p\right)y\parc y.$$
\begin{definition}
An attracting direction $\ell=\xi\R^+$ is a \emph{node direction} for $(F,\G)$ if
$$\left(\ln|\mu|,\Real\left(\xi^{k}A_0\right),...,\Real\left(\xi^{k+p-1}A_{p-1}\right)\right)<0$$
in the lexicographic order; otherwise, it is a \emph{saddle direction}. In the case $|\mu|=1$, we define the \emph{first asymptotic significant order} of $\ell$ as $p$, if $\Real(\xi^{k+j}A_j)=0$ for all $0\le j\le p-1$, or as the first index $0\le r_{\ell}\le p-1$ such that $\Real(\xi^{k+r_{\ell} }A_{r_{\ell}})\neq0$, otherwise.
\end{definition}

Note that, when $|\mu|\neq1$, all attracting directions have the same character: they are node directions in case $|\mu|<1$ and saddle directions in case $|\mu|>1$. In the case $|\mu|=1$ and $p=0$, every attracting direction $\ell$ is a saddle direction, with first significant order $r_{\ell}=0$.

In the next two sections we will prove the existence, for a reduced pair $(F,\G)$, of a stable manifold of $F$ in a neighborhood of every attracting direction $\ell$, which has dimension one or two depending on whether $\ell$ is a saddle or a node direction.

\begin{remark}\label{rem:furtherreductions}
In order to study asymptotic properties along $\G$ it will be interesting to consider further refinements of a reduced pair $(F,\G)$, in which the order of contact of $\G$ with the $x$-axis can be assumed to be arbitrarily high. Let us explain how to obtain such transformations. Let $\g(s)=(s,\g_2(s))$ be a parametrization of $\G$, where the order of $\g_2(s)$ is at least $k+p+2$. Given $m\ge2$, a change of coordinates $(x,y)\mapsto(x,y-J_{k+p+m-1}\g_2(x))$ transforms $F$ into
\begin{align*}
x\circ F\,(x,y)&= x-x^{k+p+1}+O(x^{k+p+1}y,x^{2k+2p+1}))\\
y\circ F\,(x,y)&=\mu\left[y(1+x^ka(x))+O(x^{k+p+1}y,x^{k+p+m})\right].
\end{align*}
Notice that this change of coordinates preserves the principal part (and hence the infinitesimal one) of $(F,\G)$ for all $m\ge2$.
For technical reasons, we will also need to impose the conditions
$$x\circ F(x,y)=x-x^{k+p+1}+O(x^{2k+p+1}y,x^{2k+2p+1})\quad\text{ and }\quad\Real(A_p)>0$$
on a reduced pair $(F,\G)$, where $A_p$ is the coefficient of the term of degree $k+p$ in the infinitesimal principal part of $(F,\G)$. These two conditions can be obtained after a polynomial change of variables as above, to increase the order of contact of $\G$ with the $x$-axis, and a finite number of blow-ups at the infinitely near points of $\G$, each of which increases by one unit both $A_p$ and the exponent of $x$ in the terms in $x\circ F$ with positive degree in the variable $y$. Observe that these blow-ups only change the coefficient $A_p$ in the infinitesimal principal part of $(F,\G)$ and leave the other ones unaltered. Therefore, although the infinitesimal principal part changes, the node or saddle character of each attracting direction does not.
\end{remark}

\section{$\G$-parabolic case: existence of parabolic curves}\label{sec:saddle-direction}

In this section, we prove that if $\G$ is a parabolic formal invariant curve of $F\in\Diff\Cd2$ such that $(F,\G)$ is reduced, then for every saddle attracting direction there exists a one-dimensional stable manifold of $F$ asymptotic to $\G$.

\begin{theorem}\label{the:saddle}
Consider $F\in\Diff\Cd2$ and a formal invariant curve $\G$ of $F$ such that the pair $(F,\G)$ is in reduced form in some coordinates $(x,y)$. For each  attracting direction of $(F,\G)$ which is a saddle direction, there exists a parabolic curve of $F$ asymptotic to $\G$. More precisely, if $\ell$ is a saddle attracting direction of $(F,\G)$, then there exist a connected and simply connected domain $R\subset \C$, with $0\in\partial R$, that contains $\ell$ and a
holomorphic map $\varphi:R\to\C$ such that the set
$$S=\left\{(x,\varphi(x)):x\in R\right\}$$
is a parabolic curve of $F$ asymptotic to $\G$. Moreover, if $\{(x_n,y_n)\}$ is an orbit of $F$ asymptotic to $\G$ such that $\{x_n\}$ has $\ell$ as tangent direction, then $(x_n,y_n)\in S$ for all $n$ sufficiently big.
\end{theorem}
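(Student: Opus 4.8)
The plan is to realize the parabolic curve as the graph $y=\varphi(x)$ of a holomorphic function over a Leau--Fatou petal $R$ bisected by $\ell$, obtained as the fixed point of a graph-transform operator that contracts precisely because $\ell$ is a saddle direction. Throughout I may assume, after the linear change $x\mapsto\xi x$, that $\ell=\R^+$, and I invoke Remark~\ref{rem:furtherreductions} to arrange $\Real(A_p)>0$, the sharpened form $x\circ F=x-x^{k+p+1}+O(x^{2k+p+1}y,x^{2k+2p+1})$, and an arbitrarily high order of contact of $\G$ with the $x$-axis; by Remark~\ref{rem:orderofcontact} this makes the independent term $b(x)$ in the reduced form of arbitrarily high order.

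First I would construct the petal. Passing to the Fatou-type coordinate $u=1/\big((k+p)x^{k+p}\big)$, the first-coordinate dynamics $x\mapsto x\circ F(x,y)$ becomes $u\mapsto u+1+o(1)$ uniformly for $|y|$ small, so the standard attracting petal $R$ around $\ell=\R^+$ corresponds to a right half-plane $\{\Real(u)>M\}$. I would check the two properties needed below: (i) $R$ is connected, simply connected, with $0\in\partial R$ and $\ell\subset R$; and (ii) there is a class of admissible (small) functions $\varphi$ for which $x\in R\Rightarrow F_1(x,\varphi(x))\in R$, so that the induced first-coordinate map $g_\varphi(x)=F_1(x,\varphi(x))$ preserves $R$ and its forward orbits $\{x_n\}$ satisfy $x_n\to0$ tangentially to $\ell$ with $|x_n|\sim c\,n^{-1/(k+p)}$.

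Next I would set up the fixed-point scheme. On the Banach space $\mathcal B$ of holomorphic $\varphi:R\to\C$ with weighted norm $\|\varphi\|=\sup_{x\in R}|\varphi(x)|/|x|^{d}$ finite (for a suitable large exponent $d$ tied to the contact order of $\G$), define the graph transform $T$ by inverting the invariance relation in the transverse variable: writing $F_2(x,y)=\mu\big(1+x^{k}a(x)+\cdots\big)y+\mu b(x)+(\text{h.o.t.})$, the condition $\varphi(g_\varphi(x))=F_2(x,\varphi(x))$ becomes
\begin{equation*}
(T\varphi)(x)=\frac{\varphi\big(g_\varphi(x)\big)-\mu b(x)-(\text{h.o.t.})}{\mu\big(1+x^{k}a(x)+\cdots\big)},
\end{equation*}
whose fixed points are exactly the $F$-invariant graphs over $R$. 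Iterating $T$ unfolds into the summation $\varphi(x)=-\sum_{n\ge0}\mu b(x_n)\big/\prod_{i=0}^{n}\Lambda(x_i)$ for the linearized model, with multipliers $\Lambda(x_i)=\mu(1+A_0x_i^{k}+\cdots)$; the point is that $\prod_{i\le n}|\Lambda(x_i)|\to\infty$ exactly when $\ell$ is a saddle direction. Indeed $\log|\Lambda(x_i)|\approx\ln|\mu|+\sum_{j}\Real(\xi^{k+j}A_j)|x_i|^{k+j}$, and since $\sum_i|x_i|^{k+j}$ diverges for each relevant $j$, the lexicographic saddle inequality — together with the normalization $\Real(A_p)>0$ that covers the degenerate case $r_\ell=p$ — forces the products to grow. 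This product analysis is the quantitative heart of the argument and the step I expect to be the main obstacle, since the lexicographic (rather than strict) sign condition requires isolating the first nonzero index $r_\ell$ and matching it against the precise divergence rate of $\sum_i|x_i|^{k+j}$. With these estimates, together with the flatness of $b$, I would show that $T$ maps a small closed ball of $\mathcal B$ into itself and is a contraction, yielding a unique fixed point $\varphi$.

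Finally I would collect the conclusions. The graph $S=\{(x,\varphi(x)):x\in R\}$ is holomorphic, connected and simply connected (as $R$ is), does not contain the origin (since $0\in\partial R$), is $F$-invariant by construction, and its orbits converge to $0$ by the petal dynamics; hence $S$ is a parabolic curve. That $S$ is asymptotic to $\G$ I would verify through the asymptotic-expansion characterization of Section~\ref{sec:blow-ups}: since $\varphi$ is flat of high order and its jet matches the normalized parametrization of $\G$, the sequences of iterated tangents of $S$ and of infinitely near points of $\G$ coincide. For the last assertion, given an orbit $\{(x_n,y_n)\}$ asymptotic to $\G$ with $\{x_n\}$ tangent to $\ell$, eventually $x_n\in R$ and the transverse deviation $\delta_n=y_n-\varphi(x_n)$ obeys $\delta_{n+1}\approx\Lambda(x_n)\delta_n$; the same product growth that gave the contraction now forces $\delta_n\equiv0$ for large $n$ (a nonzero $\delta$ would escape, contradicting $\delta_n\to0$), so $(x_n,y_n)\in S$ eventually.
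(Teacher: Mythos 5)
Your overall architecture (reduction via Remark~\ref{rem:furtherreductions}, a weighted Banach space of graphs over a domain $R$, a Hakim-type fixed-point operator whose contraction comes from growth of the transverse multiplier products, uniqueness of the fixed point giving asymptoticity, and the same multiplier growth forcing capture of all orbits) matches the paper's. But there is a genuine gap exactly at the step you flag as ``the main obstacle'', and it is not a matter of sharper bookkeeping: it invalidates your choice of domain. You take $R$ to be a standard Leau--Fatou petal bisected by $\ell$ (a right half-plane in the Fatou coordinate $u=1/((k+p)x^{k+p})$). In the degenerate saddle case $|\mu|=1$, $r_\ell\geq 1$ (so $\Real(A_0)=0$, $\Imag(A_0)\neq 0$), the multiplier estimate you rely on, $\log|\Lambda(x_i)|\approx \ln|\mu|+\sum_j\Real(\xi^{k+j}A_j)|x_i|^{k+j}$, is valid only for points on the ray $\ell$ itself. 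Off the ray one has $\Real(A_jx^{k+j})=\Real(A_j)\Real(x^{k+j})-\Imag(A_j)\Imag(x^{k+j})$, and on a petal of fixed opening the term $-\Imag(A_0)\Imag(x^k)$, of size comparable to $|x|^{k}|\arg x|$, dominates $\Real(A_{r_\ell}x^{k+r_\ell})\sim|x|^{k+r_\ell}$ on roughly half of the petal. Concretely, for $k=1$, $\mu=1$, $A(x)=i+x$ (a saddle direction with $r_\ell=1$), one has $\Real(xA(x))=-\Imag(x)+\Real(x^2)<0$ whenever $\Imag(x)>c|x|$ with $|x|$ small; orbits of $f_u$ passing through that part of the petal have $\prod_i|\Lambda(x_i)|\to 0$, not $\infty$, so your operator $T$ is not a contraction there (the series defining the fixed point diverges), and your final rigidity argument $\delta_{n+1}\approx\Lambda(x_n)\delta_n\Rightarrow\delta_n\equiv 0$ breaks down for the same reason.

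The paper's resolution is to change the shape of $R$: when $|\mu|=1$ and $r=r_\ell\geq1$, the domain $R_{d,e,\varepsilon}$ is not a sector but an asymmetric region lying essentially on one side of $\R^+$ (the side determined by the sign of $\Imag(A_0)$), bounded on the other side by a curve tangent to $\R^+$ to order $r+1$, e.g. $\{-d\Real(x)<\Imag(x)<e\Real(x)^{r+1}\}$ when $\Imag(A_0)>0$. Lemmas~\ref{lem:holconj}, \ref{lem:imagecurve} and \ref{rem:Rdecontainedsaddledomain} exist precisely to show that $\Real(x^kA(x))>0$ (indeed $\geq c|x|^{k+r}$, by Lemma~\ref{lem:est}) on this domain, which is what makes the products $|\mu^{-j}E(x_0)E(x_j)^{-1}|$ bounded and $T$ a contraction. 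This change of domain creates a second obligation your sketch does not meet either: for the last assertion you must show that an orbit asymptotic to $\G$ whose $x$-components are tangent to $\ell$ eventually enters the \emph{thin} domain, which is no longer automatic (it is automatic only for sectors); the paper proves this via the refined estimate $\Imag(x_j)/\Real(x_j)^{r+1}\to0$ of Lemma~\ref{lem:tangency}, obtained from an approximate Fatou coordinate with a logarithmic correction term. Your argument is sound in the cases $|\mu|>1$ and $|\mu|=1$, $r_\ell=0$, where the paper also uses an honest sector, but as written it does not cover the degenerate saddle directions, which are the hardest part of the theorem.
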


The rest of the section is devoted to the proof of Theorem~\ref{the:saddle}. The strategy of the proof is analogous to the one used in \cite{Lop-S}, which is inspired by the techniques used by Hakim in \cite{Hak}.

Up to a linear change of coordinates, we can assume without loss of generality that $\ell=\R^+$; in the case $|\mu|=1$, we denote by $r$ its first significant order. For $d,e, \varepsilon>0$, we define the set $R_{d,e,\varepsilon}$ as follows.
\begin{itemize}
\item If $|\mu|>1$ or $|\mu|=1$ and $r=0$, then
$$
R_{d,e,\varepsilon}=\{x\in\C: |x|<\varepsilon, -d\Real(x)<\Imag (x)<e\Real(x)\}.
$$
\item If $|\mu|=1$, $r\ge1$ and $\Imag(a(0))>0$, then
$$
R_{d,e,\varepsilon}=\{x\in\C: |x|<\varepsilon, -d\Real(x)<\Imag (x)<e \Real(x)^{r+1}\}.
$$
\item If $|\mu|=1$, $r\ge1$ and $\Imag(a(0))<0$, then
$$
R_{d,e,\varepsilon}=\{x\in\C: |x|<\varepsilon, -d\Real(x)^{r+1}<\Imag (x)<e\Real(x)\}.
$$
\end{itemize}

As mentioned in Remark~\ref{rem:furtherreductions}, to prove the asymptoticity of the parabolic curve we will need to consider successive changes of coordinates in which the order of contact of $\G$ with the $x$-axis is arbitrarily high. Therefore, we consider an arbitrary $m\ge p+2$. By Remark~\ref{rem:furtherreductions}, after a polynomial change of variables and a finite sequence of blow-ups centered at the infinitely near points of $\G$ we can find some coordinates $(\xx m,\yy m)$, with $(x,y)=\phi(\xx m,\yy m)=\left(\xx m,\xx m^t\yy m+P(\xx m)\right)$ for some $t\in\N$ and some polynomial $P$ of order at least $k+p+2$, such that $F$ is written
\begin{align*}
\xx m\circ F\,(\xx m,\yy m)&=F_1\,(\xx m,\yy m)=\xx m-\xx m^{k+p+1}+O(\xx m^{2k+p+1}\yy m,\xx m^{2k+2p+1}) \\
\yy m\circ F\,(\xx m,\yy m)&=F_2\,(\xx m,\yy m)=\mu\left[\yy m+\xx m^k a(\xx m)\yy m+O(\xx m^{k+p+1}\yy m,\xx m^{k+p+m})\right],
\end{align*}
$\G$ has order of contact at least $k+p+m$ with the $\xx m$-axis and $\Real(A_p)>0$, where $A_p$ is the coefficient of the term of degree $k+p$ in the infinitesimal principal part of $(F,\G)$. Note that it suffices to prove Theorem~\ref{the:saddle} in the new coordinates $(\xx m,\yy m)$. In fact, if $S_m$ is a parabolic curve of the transform of $F$ by $\phi$ then $\phi(S_m)$ is a parabolic curve of $F$. Moreover, $\phi(S_m)$ is asymptotic to $\Gamma$ if and only if $S_m$ is asymptotic to the strict transform of $\Gamma$ and, since the $x$-variable is preserved by $\phi$, the fact that $S_m$ is a graph over a domain $R\subset\mathbb{C}$ and the property of $S_m$ eventually containing any asymptotic orbit whose sequence of first components is tangent to $\ell$ are both preserved by $\phi$. For simplicity, we also denote the new coordinates by $(x,y)$. By the definition of a saddle direction, we have that either $|\mu|>1$ or $|\mu|=1$ and $\Real(A_j)=0$ for $j=0, \dots, r-1$ and $\Real(A_r)>0$, where
$$\log\mu+x^kA(x)=\log\mu+x^k\left(A_0+A_1x+\dots+A_px^p\right)$$
is the infinitesimal principal part of $(F,\G)$. Notice that $A_0=a(0)\neq0$.

We shall need the following technical lemmas.

\begin{lemma}\label{lem:holconj}
Suppose $|\mu|=1$ and $r>0$. Then there exists a germ of diffeomorphism of the form $\rho(x)=x+\sum_{j=2}^{\infty}\rho_j x^j$ such that
$$
A_0\rho(x)^k=x^{k}A(x),
$$
with $\rho_j\in\R$ for any $2\leq j\leq r$ and $\rho_{r+1}\not\in\R$. Moreover, $\Imag(A_0)\Imag(\rho_{r+1})<0$.
\end{lemma}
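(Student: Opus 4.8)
The plan is to solve the equation $A_0\rho(x)^k=x^kA(x)$ explicitly by extracting a $k$-th root, and then to read off the reality of the coefficients $\rho_j$ directly from the saddle hypothesis. First I would rewrite the equation as $(\rho(x)/x)^k=A(x)/A_0$. Since $A(0)=A_0\neq0$, the series $B(x):=A(x)/A_0=1+b_1x+\cdots+b_px^p$, with $b_j:=A_j/A_0$, is a polynomial with $B(0)=1$, so it admits a unique analytic $k$-th root $u(x)=1+u_1x+u_2x^2+\cdots$ with $u(0)=1$. Setting $\rho(x)=x\,u(x)$ produces a germ of diffeomorphism of the required form $\rho(x)=x+\sum_{j\ge2}\rho_jx^j$, with $\rho_{j}=u_{j-1}$, and by construction $A_0\rho(x)^k=x^kA(x)$.

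Next I would extract the arithmetic of the coefficients. Recall that we are in the saddle case $|\mu|=1$ with $\Real(A_j)=0$ for $0\le j\le r-1$ and $\Real(A_r)>0$; since $A_0=a(0)\neq0$ and $\Real(A_0)=0$, the constant $A_0$ is a nonzero purely imaginary number, so $\Imag(A_0)\neq0$. It follows that $b_j=A_j/A_0$ is \emph{real} for $0\le j\le r-1$ (a quotient of two purely imaginary numbers, the case $j=0$ being $b_0=1$), whereas a short computation gives $\Imag(b_r)=-\Real(A_r)/\Imag(A_0)\neq0$, so $b_r\notin\R$. Comparing coefficients of $x^j$ in $u(x)^k=B(x)$ yields, for each $j\ge1$, a relation $k\,u_j=b_j-Q_j(u_1,\dots,u_{j-1})$, where $Q_j$ is a polynomial with non-negative integer coefficients: it collects the monomials $u_{i_1}\cdots u_{i_k}$ with $i_1+\cdots+i_k=j$ and at least two nonzero indices, each of which is then necessarily strictly less than $j$. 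By induction on $j$, using that $b_1,\dots,b_{r-1}$ are real, I obtain that $u_1,\dots,u_{r-1}$ are real; equivalently $\rho_j=u_{j-1}\in\R$ for $2\le j\le r$.

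Finally, for the coefficient $\rho_{r+1}=u_r$ the same relation reads $k\,u_r=b_r-Q_r(u_1,\dots,u_{r-1})$. As $u_1,\dots,u_{r-1}$ are real and $Q_r$ has real coefficients, the correction $Q_r(u_1,\dots,u_{r-1})$ is real, whence $\Imag(u_r)=\Imag(b_r)/k=-\Real(A_r)/(k\,\Imag(A_0))\neq0$; in particular $\rho_{r+1}\notin\R$. Multiplying by $\Imag(A_0)$ gives $\Imag(A_0)\,\Imag(\rho_{r+1})=-\Real(A_r)/k<0$, which is the asserted inequality. The only genuinely delicate point is the bookkeeping in the second paragraph: one must verify that in the relation $k\,u_j=b_j-Q_j(u_1,\dots,u_{j-1})$ the correction $Q_j$ involves only the strictly lower coefficients $u_1,\dots,u_{j-1}$ and has real coefficients, so that the reality of $b_1,\dots,b_{r-1}$ propagates inductively to $u_1,\dots,u_{r-1}$ and the lone non-real coefficient $b_r$ passes, unmodified, into the imaginary part of $u_r$. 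Everything else is a routine verification.
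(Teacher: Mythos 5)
Your proposal is correct and follows essentially the same route as the paper: both solve $A_0\rho(x)^k=x^kA(x)$ coefficient by coefficient, observing that the recursion expresses $\rho_{j+1}$ (your $u_j$) as $A_j/A_0$ minus a real-coefficient polynomial in the lower coefficients, so that reality of $A_j/A_0$ for $j\le r-1$ propagates inductively and the non-real quotient $A_r/A_0$ produces $\Imag(A_0)\Imag(\rho_{r+1})=-\Real(A_r)/k<0$. Your normalization via the analytic $k$-th root of $A(x)/A_0$ is only a cosmetic repackaging of the paper's recursion, though it does make the convergence of $\rho$ slightly more explicit.
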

\begin{proof}
The existence of $\rho$ follows since the vanishing order and the principal terms of $A_0x^k$ and $x^kA(x)$ at $0$ coincide. The properties of $\rho_j$ for $0\le j\le r+1$ follow easily solving
$$
A_0 \left(x + \sum_{j=2}^{\infty} \rho_j x^{j}\right)^k = x^{k}(A_0+A_1x+\cdots+A_px^p)
$$
recursively. Indeed, we obtain $A_1=kA_0\rho_2$ and $A_j=A_0(k\rho_{j+1}+P_j(\rho_2,\ldots,\rho_j))$ for any $2 \leq j \leq p$ where $P_j$ is a polynomial with real coefficients.
\end{proof}

\begin{lemma}\label{lem:imagecurve}
Suppose $r>0$. Consider a real analytic curve $\kappa$ at $0\in\C$ given by
$$\left\{x\in\C:\Imag (x) = \kappa_{r+1}\Real(x)^{r+1}+\kappa_{r+2}\Real(x)^{r+2} + \cdots\right\}.$$
Let $\rho(x)=x+\sum_{j=2}^{\infty}\rho_{j} x^{j}$, where $\rho_2, \ldots, \rho_r \in\R$ and $\rho_{r+1}\not\in\R$. Then $\rho (\kappa)$ is of the form $\{x\in\C:\Imag (x) = (\kappa_{r+1} + \Imag(\rho_{r+1}))\Real(x)^{r+1}+\cdots\}$.
\end{lemma}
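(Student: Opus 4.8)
The plan is to parametrize $\kappa$ by its real part and to track, in this real parameter, the vanishing orders of the real and imaginary parts of the image under $\rho$. Write $u=\Real(x)$, so that $\kappa$ is parametrized by $x(u)=u+i\,g(u)$ with $g(u)=\kappa_{r+1}u^{r+1}+\kappa_{r+2}u^{r+2}+\cdots$. The feature that makes everything work is that $g(u)=O(u^{r+1})$, i.e.\ the imaginary part of a point of $\kappa$ is of much higher order than its real part. I would then compute $\Real(\rho(x(u)))$ and $\Imag(\rho(x(u)))$ as convergent power series in $u$ and read off the equation of $\rho(\kappa)$ as a graph.

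First I would handle the real part. Since $\rho(x)=x+\sum_{j\ge2}\rho_jx^j$ and $|x(u)|=O(u)$, one gets $\Real(\rho(x(u)))=u+\sum_{j\ge2}\Real(\rho_j\,x(u)^j)=u+O(u^2)$. Hence $w:=\Real(\rho(x(u)))=u+O(u^2)$ is an invertible change of real parameter, and inverting yields $u=w+O(w^2)$, so that $u^{r+1}=w^{r+1}+O(w^{r+2})$. This already guarantees that $\rho(\kappa)$ is a graph $\Imag=$ (real analytic function of $\Real$), and it remains to identify the coefficient of $w^{r+1}$.

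The core of the argument is the imaginary part, which I would split as
\[
\Imag(\rho(x(u)))=\Imag(x(u))+\sum_{j=2}^{r}\rho_j\,\Imag\!\left(x(u)^j\right)+\Imag\!\left(\rho_{r+1}x(u)^{r+1}\right)+\sum_{j\ge r+2}\Imag\!\left(\rho_j\,x(u)^j\right),
\]
using that $\rho_2,\dots,\rho_r$ are real. The first term is $g(u)=\kappa_{r+1}u^{r+1}+O(u^{r+2})$. For $2\le j\le r$ the leading imaginary contribution of $x(u)^j$ is $j\,u^{j-1}g(u)=O(u^{j+r})$, which is $O(u^{r+2})$ since $j\ge2$, so the real-coefficient terms do not affect the order $r+1$. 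Since $x(u)^{r+1}=u^{r+1}+O(u^{2r+1})$ with imaginary part of order $O(u^{2r+1})$, hence $O(u^{r+2})$, one obtains $\Imag(\rho_{r+1}x(u)^{r+1})=\Imag(\rho_{r+1})\,u^{r+1}+O(u^{r+2})$; finally the tail $\sum_{j\ge r+2}$ is $O(u^{r+2})$ because $|x(u)|=O(u)$. Collecting the two surviving contributions gives $\Imag(\rho(x(u)))=(\kappa_{r+1}+\Imag(\rho_{r+1}))\,u^{r+1}+O(u^{r+2})$.

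To conclude, I would substitute $u=w+O(w^2)$ to re-express the imaginary part as a series in $w=\Real(\rho(x(u)))$, which leaves the leading term unchanged and yields $\Imag=(\kappa_{r+1}+\Imag(\rho_{r+1}))\,w^{r+1}+O(w^{r+2})$, exactly the asserted form for $\rho(\kappa)$. The only delicate point, and the one on which the statement really rests, is the order bookkeeping in the previous paragraph: because $\Imag(x(u))=O(u^{r+1})$ is small, the real coefficients $\rho_2,\dots,\rho_r$ contribute to the imaginary part only from order $r+2$ on, so the coefficient of $u^{r+1}$ is determined solely by $\kappa_{r+1}$ (from the linear term) and $\Imag(\rho_{r+1})$ (from the term $j=r+1$).
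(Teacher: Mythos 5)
Your proof is correct and follows essentially the same route as the paper: both parametrize $\kappa$ by its real part, compose with $\rho$, and read off the jet of order $r+1$, using that $\rho_2,\dots,\rho_r$ are real and $\Imag(x(u))=O(u^{r+1})$ so that only the linear term and the $j=r+1$ term contribute at order $r+1$ in the imaginary part. The only difference is that you spell out the order bookkeeping and the final reparametrization $u=w+O(w^2)$, which the paper compresses into the single displayed jet computation followed by ``the result follows.''
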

\begin{proof}
Let $\tau(\Real(x))= \Real(x)+ i\sum_{j=r+1}^{\infty}\kappa_j\Real(x)^{j}$ be a parametrization of $\kappa$. The jet of order $r+1$ of the parametrization $\rho\circ\tau$ of the curve $\rho\circ\kappa$ is given by
$$
J_{r+1}(\rho\circ\tau) = \Real(x) + \sum_{j=2}^{r} \rho_j \Real (x)^{j} + \Real(\rho_{r+1}) \Real(x)^{r+1} + i \left(\kappa_{r+1} + \Imag(\rho_{r+1})\right)\Real(x)^{r+1},
$$
and the result follows.
\end{proof}

\begin{lemma}\label{rem:Rdecontainedsaddledomain}
If $|\mu|=1$, then
$$	
R_{d,e,\varepsilon}\subset\{x\in\C:\Real(x^kA(x))>0\}
$$
for $d,e,\varepsilon$ sufficiently small.
\end{lemma}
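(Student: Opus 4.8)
The plan is to reduce the inequality to a statement about the image of $R_{d,e,\varepsilon}$ under the diffeomorphism $\rho$ of Lemma~\ref{lem:holconj}, and then to locate that image inside a suitable open sector by tracking the boundary curves of $R_{d,e,\varepsilon}$ with Lemma~\ref{lem:imagecurve}. For $r\ge1$ I would use the identity $x^kA(x)=A_0\rho(x)^k$ from Lemma~\ref{lem:holconj} to rewrite $\Real(x^kA(x))=\Real(A_0\rho(x)^k)$. Since in this regime $\Real(A_0)=0$, the number $A_0$ is purely imaginary, and the target set $\{w:\Real(A_0w^k)>0\}$ equals, near $\R^+$, the open sector $\{\arg(w)\in(-\pi/k,0)\}$ when $\Imag(A_0)>0$ (there $\Real(A_0w^k)=-|A_0|\Imag(w^k)$) and the sector $\{\arg(w)\in(0,\pi/k)\}$ when $\Imag(A_0)<0$. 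Thus it suffices to show that $\rho(R_{d,e,\varepsilon})$ is contained in the corresponding sector, i.e. that $\rho$ tilts the region to the correct side of the real axis.

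The case $r=0$ must be treated separately, as Lemma~\ref{lem:holconj} is not available there. When $r=0$ we have $\Real(A_0)>0$, so $\arg(A_0)$ lies strictly inside $(-\pi/2,\pi/2)$. Writing $x^kA(x)=A_0x^k(1+O(x))$ and noting that on $R_{d,e,\varepsilon}$ the argument $\arg(x)$ is controlled by $\arctan d$ and $\arctan e$, the quantity $\arg(A_0x^k)=\arg(A_0)+k\arg(x)+O(|x|)$ stays in a compact subinterval of $(-\pi/2,\pi/2)$ once $d,e,\varepsilon$ are chosen small, whence $\Real(x^kA(x))>0$.

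For $r\ge1$ I would examine the two boundary curves of $R_{d,e,\varepsilon}$. Take $\Imag(A_0)>0$ (the case $\Imag(A_0)<0$ being symmetric), so the domain is bounded above by the flat curve $\kappa_{\mathrm{up}}:\Imag(x)=e\Real(x)^{r+1}$ and below by the line $\kappa_{\mathrm{low}}:\Imag(x)=-d\Real(x)$. Lemma~\ref{lem:holconj} gives $\Imag(\rho_{r+1})<0$. Applying Lemma~\ref{lem:imagecurve} to $\kappa_{\mathrm{up}}$, whose leading coefficient is $e$, shows that $\rho(\kappa_{\mathrm{up}})$ has leading coefficient $e+\Imag(\rho_{r+1})$, which is strictly negative as soon as $e<|\Imag(\rho_{r+1})|$; hence the image of the upper boundary lies strictly below $\R^+$. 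The image of $\kappa_{\mathrm{low}}$ is tangent at $0$ to the ray of slope $-d$ (since $\rho'(0)=1$), so it remains below $\R^+$ and above the edge $\arg(w)=-\pi/k$ for $d$ small. As $\rho$ is an orientation-preserving local diffeomorphism, it carries the region between $\kappa_{\mathrm{low}}$ and $\kappa_{\mathrm{up}}$ onto the region between the two image curves, which lies in the sector $\{\arg(w)\in(-\pi/k,0)\}$; this yields $\Real(x^kA(x))>0$ on $R_{d,e,\varepsilon}$.

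The main obstacle, and the whole reason for the pinched shape of $R_{d,e,\varepsilon}$, is the apparent paradox that the flat boundary (of order $r+1$) lies above the real axis yet must be sent below it. This is exactly where the sign condition $\Imag(A_0)\Imag(\rho_{r+1})<0$ of Lemma~\ref{lem:holconj} enters: it guarantees that $\rho$ tilts the order-$(r+1)$ boundary to the correct side, provided the flat constant ($e$ here, or $d$ when $\Imag(A_0)<0$) is taken smaller than $|\Imag(\rho_{r+1})|$. The remaining work is the routine bookkeeping that the full region, not merely its two boundary curves, lands in the sector, together with the choice of $d,e,\varepsilon$ small enough, compatibly with $|x|<\varepsilon$, so that the higher-order remainders do not overturn the leading-order signs.
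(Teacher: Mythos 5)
Your proposal is correct and follows essentially the same route as the paper's proof: both reduce the case $r\ge 1$ via Lemma~\ref{lem:holconj} to showing that $\rho(R_{d,e,\varepsilon})$ lies on the appropriate side of $\R^+$ (equivalently, in a region where $\Imag(x^k)$ has a fixed sign), and both use Lemma~\ref{lem:imagecurve} together with the sign condition $\Imag(A_0)\Imag(\rho_{r+1})<0$ to control the image of the flat boundary curve, taking the flat constant smaller than $|\Imag(\rho_{r+1})|$. The only differences are cosmetic: you normalize $\Imag(A_0)>0$ where the paper takes $\Imag(A_0)<0$, and you spell out the $r=0$ case that the paper dismisses as clear.
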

\begin{proof}
The result is clear if $r=0$. Suppose $r\ge1$, and assume without loss of generality that $\Imag(A_0)<0$. If $\rho$ is the diffeomorphism of Lemma~\ref{lem:holconj}, it suffices to show that $\rho(R_{d,e,\varepsilon})\subset\{x\in\C:\Imag(x^k)>0\}$. By Lemma~\ref{lem:imagecurve}, the set $\rho(R_{d,e,\varepsilon})$ is enclosed between two curves of the form
$$
\Imag (x)=(-d+\Imag(\rho_{r+1}))\Real(x)^{r+1}+\cdots \quad \hbox{and} \quad	\Imag (x) = 2e\Real(x).
$$
Since $\Imag(\rho_{r+1})>0$, if $d,e,\varepsilon$ are small enough we conclude that $\Imag(x^k)>0$ for any $x\in \rho(R_{d,e,\varepsilon})$.
\end{proof}

\begin{lemma}\label{lem:rde}
$F_1(R_{d,e,\varepsilon}\times B(0,\varepsilon))\subset R_{d,e, \varepsilon}$ for $d,e,\varepsilon>0$ sufficiently small.
\end{lemma}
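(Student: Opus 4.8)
The plan is to read off from the reduced form that, setting $N:=k+p$, the first component is a small perturbation of the one-dimensional parabolic germ $x\mapsto x-x^{N+1}$. Indeed, in the coordinates fixed in this section one has
$F_1(x,y)=x-x^{N+1}+E(x,y)$ with $|E(x,y)|\le C\bigl(|x|^{2k+p+1}|y|+|x|^{2k+2p+1}\bigr)$, so that on $R_{d,e,\varepsilon}\times B(0,\varepsilon)$, where $|y|<\varepsilon$, the error is bounded by $C\bigl(\varepsilon|x|^{2k+p+1}+|x|^{2k+2p+1}\bigr)$, i.e.\ of order strictly higher than the displacement $-x^{N+1}$ by at least a factor $|x|^{k}$. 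Thus $F_1(x,y)=x-x^{N+1}(1+o(1))$ with the $o(1)$ uniform in $y$, and the whole verification reduces to showing that the model $x\mapsto x-x^{N+1}$ keeps $R_{d,e,\varepsilon}$ invariant while the perturbation stays negligible. Since $R_{d,e,\varepsilon}$ is the intersection of $\{|x|<\varepsilon\}$ with two regions each cut out by a single real-analytic inequality $\phi_i>0$, it suffices to verify $|F_1|<\varepsilon$ and $\phi_i(F_1(x,y))>0$ on all of $R_{d,e,\varepsilon}$ for $i=1,2$.

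First I would check the size condition. For $d,e$ small the region lies in a narrow sector bisected by $\R^+$, so $\Real(x^N)\ge c|x|^N$ for some $c>0$; expanding $|F_1(x,y)|^2=|x|^2\bigl(1-2\Real(x^N)(1+o(1))\bigr)+O(|x|^{2N+2})$ and noting that the error contributes to $|F_1|^2$ only at order $\varepsilon|x|^{2k+p+2}\ll|x|^{N+2}$, one gets $|F_1(x,y)|<|x|<\varepsilon$ for $\varepsilon$ small. For the two boundary inequalities, away from a given boundary the defining function is bounded below by a positive constant while $F_1$ moves points by $O(|x|^{N+1})$, so positivity is automatic there; it remains to control a uniform neighborhood of each boundary, which is the content of the boundary computations below.

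For a straight boundary ray $\Imag(x)=c\Real(x)$ (both rays of the first case, the lower ray of the second, the upper ray of the third), the functional measuring the signed distance to the ray gains from $-x^{N+1}$ a contribution of the form $|x|^{N+1}\cos\bigl((N+1)\beta\bigr)\bigl(\tan((N+1)\beta)-\tan\beta\bigr)$ at the boundary angle $\beta$, which is strictly positive because $N\ge1$; hence boundary points are rotated strictly into the interior. For a curved boundary $\Imag(x)=e\Real(x)^{r+1}$ (the pinched side in the second and third cases) I would set $\Real(x)=t$, use $\arg(x)\approx et^{r}$, and compute $e\Real(F_1)^{r+1}-\Imag(F_1)=e(N-r)\,t^{N+r+1}+(\text{error})$; the leading coefficient is $N-r=k+p-r\ge k\ge1>0$ since the first significant order satisfies $r\le p$, and the symmetric computation on the lower curved boundary of the third case gives $d(N-r)t^{N+r+1}+(\text{error})$. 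This positivity of the leading term is the geometric reason the pinched region is forward invariant.

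The hard part is ensuring the error does not overwhelm the inward term $e(N-r)t^{N+r+1}$ near the curved boundary and near the origin. The contribution of $E$ to $\Imag(F_1)$ there is of size $\lesssim\varepsilon\,t^{2k+p+1}+t^{2k+2p+1}$; the second summand is harmless because $2k+2p+1>k+p+r+1$ (as $r\le p$), while the first is dominated exactly when $2k+p+1>k+p+r+1$, i.e.\ when $k>r$. This is precisely what the strengthened normal form of Remark~\ref{rem:furtherreductions} secures: one raises the $x$-exponent of the $y$-terms of $x\circ F$ by blow-ups at the infinitely near points of $\G$, each of which alters only $A_p$ (preserving $\Real(A_p)>0$ and hence the saddle character and the value of $r$), until the $y$-part of $E$ has $x$-order strictly above $k+p+r+1$. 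With the errors thus dominated, each defining function is strictly positive at $F_1(x,y)$ throughout $R_{d,e,\varepsilon}$; combined with $|F_1|<|x|$ and with $d,e,\varepsilon$ chosen small, this yields $F_1\bigl(R_{d,e,\varepsilon}\times B(0,\varepsilon)\bigr)\subset R_{d,e,\varepsilon}$ in all three cases.
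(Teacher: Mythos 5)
Your proof is correct, and its skeleton is the same as the paper's: the same decomposition of $R_{d,e,\varepsilon}$ into its defining inequalities, the same estimate $|F_1(x,y)|\le |x|<\varepsilon$, the same rotational (signed-distance) gain of order $|x|^{k+p+1}$ at a straight ray, and the same inward gain of order $(k+p-r)\,t^{k+p+r+1}$ along the pinched curve. Where you genuinely depart from the paper is the error bookkeeping, and your version is the more careful one. In its near-boundary case $\alpha=r+1>1$, the paper writes $\Imag(F_1)=\Imag(x)-\Imag(x^{k+p+1})+O(x^{k+p+2})$ and then absorbs the error into $O(x^{k+p+\alpha+1})$; since the error of $F_1$ contains $O(x^{2k+p+1}y)$ with $|y|<\varepsilon$ a free parameter, that absorption is justified only when $2k+p+1\ge k+p+r+2$, i.e.\ $k>r$ (the borderline $k=r$ can still be rescued by choosing $\varepsilon$ small compared with $d$). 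You isolate exactly this inequality and, when it fails, restore it by performing further blow-ups at the infinitely near points of $\G$, which change only $A_p$ --- hence preserve $\Real(A_p)>0$, the saddle character and $r$ --- and raise the $x$-exponent of the $y$-terms of $F_1$ above $k+p+r+1$. This extra reduction is not cosmetic: with the exponent $2k+p+1$ as fixed in the paper and $k<r$, a term $c\,x^{2k+p+1}y$ with $|y|$ comparable to $\varepsilon$ and adversarial argument overwhelms the inward push of order $d\,t^{k+p+r+1}$ near the curved boundary, so a strengthening of this kind is actually needed for the invariance argument to close in all cases. Your approach buys full generality at the price of one more round of reduction; the paper's shorter computation is complete as written only when $k>r$ (which includes $r=0$, $p=0$ and the semi-hyperbolic cases, i.e.\ the situations previously treated in the literature).

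One small repair to your write-up: the dichotomy ``away from a given boundary the defining function is bounded below by a positive constant'' cannot be taken literally, because both defining functions tend to $0$ at the origin on all of $R_{d,e,\varepsilon}$. The split must be relative, as in the paper: fix $\delta<1$ with $(k+p+1)\delta>\alpha$ and separate the strip $\Imag(x)<-\delta d\Real(x)^{\alpha}$ from its complement; on the complement the slack is at least $d(1-\delta)\Real(x)^{\alpha}$, which dominates the total displacement $O(x^{k+p+1})$ because $\alpha\le r+1\le p+1<k+p+1$, while your boundary computations (extended to the strip, not just to the boundary itself) handle the rest. With this adjustment, and its analogue for the other boundary, your argument is complete.
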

\begin{proof}
The set $R_{d,e,\varepsilon}$ is the intersection of the three sets $A= \{x\in\C: |x| < \varepsilon, \Real(x)>0\}$, $B=\{x\in\C:\Imag (x)>-d\Real(x)^{\alpha}\}$ and $C=\{x\in\C:\Imag (x)<e\Real(x)^{\beta}\}$, where either $\{\alpha,\beta\} = \{1,r+1\}$ or $\alpha=\beta=1$. Let us show that $F_1(x,y)$ belongs to those sets for any $(x,y)\in R_{d,e,\varepsilon}\times B(0,\varepsilon)$ if $d,e,\varepsilon>0$ are sufficiently small.	
	
Note that $F_1(x,y)= x - x^{k+p+1} + O(x^{k+p+2})$ for any $(x,y)\in R_{d,e,\varepsilon}\times B(0,\varepsilon)$. Thus, $\Real(F_1(x,y))=\Real (x)+O(x^{k+p+1})$ in $R_{d,e,\varepsilon}\times B(0,\varepsilon)$, so it is positive if $d,e,\varepsilon>0$ are sufficiently small. Since $F_1(x,y)/x = 1 - x^{k+p} + O(x^{k+p+1})$, we deduce $|F_1(x,y)| \leq |x|$ if $(x,y) \in R_{d,e,\varepsilon}\times B(0,\varepsilon)$ for $d,e,\varepsilon>0$ small enough. In particular, $F_1(x,y)\in A$ for any $(x,y) \in R_{d,e,\varepsilon}\times B(0,\varepsilon)$.
	
Let us show $ F_1(R_{d,e,\varepsilon}\times B(0,\varepsilon))\subset B$. Fix $0 < \delta <1$ such that $(k+p+1) \delta > \alpha$. We split $R_{d,e,\varepsilon}\times B(0,\varepsilon)$ in two subsets, namely $R_{1} = \{(x,y)\in R_{d,e,\varepsilon}\times B(0,\varepsilon): \Imag (x) < -\delta d\Real(x)^{\alpha}\}$ and $R_{2} = \left(R_{d,e,\varepsilon}\times B(0,\varepsilon)\right) \setminus R_{1}$.
In $R_{2}$, we have
\begin{align*}
\Imag(F_1(x,y))+d\Real(F_1(x,y))^{\alpha}&=\Imag (x) + d \Real (x)^{\alpha} + O(x^{k+p+1})\\
&\geq d (1-\delta)\Real(x)^{\alpha} + O(x^{k+p+1}) >0
\end{align*}
if $d,e,\varepsilon>0$ are small enough, since $\alpha<k+p+1$. Thus we obtain $F_1(R_{2}) \subset B$. Let us focus on $R_{1}$. First we consider the case $\alpha=1$.  The inequality
$$
\Imag(\log (F_1(x,y))-\log x) = \Imag\left( \log \frac{F_1(x,y)}{x} \right)	= - \Imag (x^{k+p}) + O(x^{k+p+1}) >0
$$
holds in $R_{1}$ for $d,e,\varepsilon>0$ small enough; it implies that $\arg(F_1(x,y))>\arg(x)$ so
$$
\frac{\Imag(F_1(x,y))+d\Real(F_1(x,y))}{\Real (F_1(x,y))} > \frac{\Imag (x)+ d \Real (x)}{\Real (x)}
$$
and in particular $ F_1(R_{1}) \subset B$. Suppose $\alpha>1$. Given $(x,y) \in R_{1}$, we denote $\gamma = \Imag (x)/ \Real(x)^{\alpha}$, which satisfies $-d < \gamma < -\delta d$.  We have, writing $x=\Real(x)+i\gamma\Real(x)^\alpha$, that
\begin{align*}
\Imag(F_1(x,y))&=\Imag(x)-\Imag(x^{k+p+1})+O(x^{k+p+2})\\
&=\Imag(x)-\gamma(k+p+1)\Real(x)^{k+p+\alpha}+O(x^{k+p+\alpha+1})
\end{align*}
and that
\begin{align*}
\Real(F_1(x,y))^\alpha&=\left(\Real(x)-\Real(x^{k+p+1})+O(x^{k+p+2})\right)^\alpha\\
&=\Real(x)^\alpha-\alpha\Real(x)^{k+p+\alpha}+O(x^{k+p+\alpha+1}).
\end{align*}
Therefore,
\begin{align*}
\Imag(F_1(x,y))+d\Real(F_1(x,y))^{\alpha}
&= \Imag (x) +d\Real(x)^{\alpha}\\	
&\quad- [(k+p+1)\gamma+d\alpha]\Real(x)^{k+p+\alpha}+ O(x^{k+p+\alpha+1})
\end{align*}
for $(x,y) \in R_{1}$. We denote $\delta'= d [ (k+p+1)\delta - \alpha]$, which satisfies $\delta'>0$ by the choice of $\delta$. We obtain
$$
\Imag(F_1(x,y))+d\Real(F_1(x,y))^{\alpha} \ge \Imag (x) + d\Real(x)^{\alpha} + \delta'\Real(x)^{k+p+\alpha}+ O(x^{k+p+\alpha+1})>0
$$
for all $(x,y) \in R_{1}$ if $d,e,\varepsilon>0$ are small enough. In particular, $F_1(R_{1})\subset B$.
	
Analogously we can show that $F_1(R_{d,e,\varepsilon}\times B(0,\varepsilon)) \subset C$, and the Lemma is proved.
\end{proof}

We consider $0<\varepsilon<1$ and fix $d,e>0$ small enough so that Lemmas \ref{rem:Rdecontainedsaddledomain} and \ref{lem:rde} hold (notice that this does not depend on $m$). Consider the Banach space
$$\mc B^m_\varepsilon=\left\{u\in\mathcal{O}(R_{d,e,\varepsilon},\C)\,:
\sup\left\{\frac{|u(x)|}{|x|^{m-1}}\colon x\in R_{d,e,\varepsilon}\right\}<\infty\right\}$$
with the norm $\|u\|=\sup\Bigl\{\frac{|u(x)|}{|x|^{m-1}}:x\in R_{d,e,\varepsilon}\Bigr\}$ and its closed subset
$$\mc{H}^m_\varepsilon=\{u\in\mc B^m_\varepsilon\colon \|u\|\leq 1, |u'(x)|\le|x|^{m-p-2} \, \forall x\in R_{d,e,\varepsilon}\}.$$
If we denote $f_u(x)=F_1(x,u(x))$, then $f_u(R_{d,e,\varepsilon})\subset R_{d,e,\varepsilon}$ for every $u\in\mc{H}^m_\varepsilon$, by Lemma~\ref{lem:rde}. Moreover, as in Leau-Fatou Flower Theorem, there exists a constant $C>0$ such that if $x_0\in R_{d,e,\varepsilon}$ and $u\in\mc{H}^m_\varepsilon$, and we denote $x_j=f_u(x_{j-1})$, then
\begin{equation}\label{eq:dynamics1}
\lim_{j\to\infty}(k+p)jx_j^{k+p}=1\quad\hbox{and}\quad |x_j|^{k+p}\le C\frac{|x_0|^{k+p}}{1+j|x_0|^{k+p}}
\end{equation}
for all $j\in\N$. Therefore, if $u\in\mc{H}^m_\varepsilon$ is a solution of the equation
\begin{equation}\label{eq:invariance}
u(f_u(x))=F_2(x,u(x)),
\end{equation}
then the set $S_m=\left\{(x,u(x)):x\in R_{d,e,\varepsilon}\right\}$ is a parabolic curve of $F$.

Define
$$
E(x)=\exp\left(-\int\frac{A(x)}{x^{p+1}}dx\right).
$$
We have, as in \cite[Lemma 3.7]{Lop-S},
\begin{equation}\label{eq:ExEF1x}
E(x)E(F_1(x,y))^{-1}=\exp(-x^{k}A(x))+O(x^{k+p+1},x^ky).
\end{equation}

\begin{lemma}\label{lem:sumofx_j}
If $\varepsilon>0$ is small enough and we put $x_j=f_{u}(x_{j-1})$ for ${j\geq 1}$, for any $u\in\mc{H}^m_\varepsilon$, we have:
\begin{enumerate}[(i)]
\item For any real number $s> k+p$ there exists a constant $K_s>0$, independent of $u$, such that for any $x_0\in R_{d,e,\varepsilon}$,
$$
\sum_{j\ge0}|x_j|^s\le K_s|x_0|^{s-k-p}.
$$
\item There exists a constant $M>0$ independent of $u$ such that, for any $x_0\in R_{d,e,\varepsilon}$ and for any $j\ge0$,
$$
\left|\mu^{-j}E(x_0)E(x_j)^{-1}\right|\le M.
$$
\end{enumerate}
\end{lemma}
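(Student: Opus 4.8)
The plan is to derive both estimates from the one-dimensional information already at hand: the Leau--Fatou type bound \eqref{eq:dynamics1}, the comparison \eqref{eq:ExEF1x}, and, for saddle directions, the sign information of Lemma~\ref{rem:Rdecontainedsaddledomain}. Throughout I would use that, as noted just before the statement, $f_u(R_{d,e,\varepsilon})\subset R_{d,e,\varepsilon}$ by Lemma~\ref{lem:rde}, so every iterate $x_j$ stays in $R_{d,e,\varepsilon}$ and in particular $|x_j|<\varepsilon<1$ for all $j$; I would also use that $u\in\mc{H}^m_\varepsilon$ forces $|u(x)|\le|x|^{m-1}$ with $m\ge p+2$.

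For part (i), set $a=|x_0|^{k+p}$ and $\sigma=s/(k+p)>1$. The second estimate in \eqref{eq:dynamics1} gives $|x_j|^s\le(Ca)^\sigma(1+ja)^{-\sigma}$, so I would bound the sum by comparison with an integral, $\sum_{j\ge0}(1+ja)^{-\sigma}\le 1+\int_0^\infty(1+ta)^{-\sigma}\,dt=1+1/(a(\sigma-1))$. Multiplying out and using $a^\sigma=|x_0|^s$, $a^{\sigma-1}=|x_0|^{s-k-p}$ together with $|x_0|^s\le|x_0|^{s-k-p}$ (as $|x_0|<1$), this yields $\sum_{j\ge0}|x_j|^s\le K_s|x_0|^{s-k-p}$ with $K_s=C^\sigma(1+1/(\sigma-1))$ depending only on $s$ and $C$, hence independent of $u$ and $x_0$.

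For part (ii), I would write the quantity as a telescoping product $\mu^{-j}E(x_0)E(x_j)^{-1}=\prod_{i=1}^j\mu^{-1}E(x_{i-1})E(x_i)^{-1}$ and apply \eqref{eq:ExEF1x} to each factor with $(x,y)=(x_{i-1},u(x_{i-1}))$. Since $|u(x_{i-1})|\le|x_{i-1}|^{m-1}$ and $m-1\ge p+1$, the term $O(x^{k+p+1},x^ky)$ is in fact $O(x_{i-1}^{k+p+1})$, so each factor equals $\mu^{-1}\exp(-x_{i-1}^kA(x_{i-1}))\,(1+O(x_{i-1}^{k+p+1}))$. Taking the logarithm of the modulus and summing, I would split the result into a main term $\sum_{i=1}^j\big(-\log|\mu|-\Real(x_{i-1}^kA(x_{i-1}))\big)$ and a correction $\sum_{i=1}^j O(x_{i-1}^{k+p+1})$; the correction is bounded in absolute value by part (i) with $s=k+p+1$, giving a uniform $O(|x_0|)=O(\varepsilon)$.

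The hard part is to show the main term is bounded above, and this is exactly where the saddle hypothesis enters. When $|\mu|>1$, I would use that $\log|\mu|>0$ while $|\Real(x_{i-1}^kA(x_{i-1}))|\le C'|x_{i-1}|^k\le C'\varepsilon^k$, so shrinking $\varepsilon$ makes every summand $\le-\frac{1}{2}\log|\mu|<0$ and the main term $\le 0$. When $|\mu|=1$, so $\log|\mu|=0$, the summand is $-\Real(x_{i-1}^kA(x_{i-1}))$, which is strictly negative because Lemma~\ref{rem:Rdecontainedsaddledomain} gives $\Real(x^kA(x))>0$ on $R_{d,e,\varepsilon}$ for $\varepsilon$ small; hence again the main term is $\le 0$. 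In both cases the main term contributes a nonpositive quantity and the correction a uniformly bounded one, so $|\mu^{-j}E(x_0)E(x_j)^{-1}|\le M$ with $M$ independent of $u$, $x_0$ and $j$. The only point to check carefully is that the implied constants in all the $O$-terms are uniform over $R_{d,e,\varepsilon}$ and over $u\in\mc{H}^m_\varepsilon$, which holds since the relevant bounds depend only on $\|u\|\le1$ and on $|x_{i-1}|<\varepsilon$.
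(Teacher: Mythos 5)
Your proof is correct and follows essentially the same route as the paper: part (i) is the standard integral-comparison argument (which the paper delegates to Hakim's Corollary 4.3), and part (ii) is the same telescoping of $\mu^{-j}E(x_0)E(x_j)^{-1}$ into factors controlled by \eqref{eq:ExEF1x}, with the saddle condition (Lemma~\ref{rem:Rdecontainedsaddledomain} when $|\mu|=1$, smallness of $\varepsilon$ against $\log|\mu|$ when $|\mu|>1$) making each main factor of modulus at most $1$ and part (i) with $s=k+p+1$ absorbing the error terms. Your only deviation is cosmetic: you take logarithms and bound a sum, where the paper bounds the product $\prod_{l}(1+K|x_l|^{k+p+1})$ directly.
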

\begin{proof}
Part (i) follows from the inequality in \eqref{eq:dynamics1}, as in \cite[Corollary 4.3]{Hak}. To prove part (ii), observe that
$$
E(x_0)E(x_1)^{-1}=\exp\left(-x_0^kA(x_0)\right)+\theta_u(x_0),
$$
where $|\theta_u(x_0)|\le K|x_0|^{k+p+1}$ for any $x_0\in R_{d,e,\varepsilon}$ and any $u\in\mc{H}^m_\varepsilon$, with some $K>0$ independent of $u$.
If $|\mu|>1$, since $\left|\exp\left(-x_0^kA(x_0)\right)\right|\le\exp\left(K'\varepsilon^k\right)$
for some $K'>0$, we have $\left|\mu^{-1}\exp\left(-x_0^kA(x_0)\right)\right|\le1$ if $\varepsilon$ is small enough. If $|\mu|=1$, since $R_{d,e,\varepsilon}\subset\{x\in\C:\Real(x^kA(x))>0\}$ by Lemma~\ref{rem:Rdecontainedsaddledomain}, we have $\left|\mu^{-1}\exp\left(-x_0^kA(x_0)\right)\right|\le1$.
Therefore, for $\varepsilon>0$ small enough, we obtain
$$
\left|\mu^{-j}E(x_0)E(x_j)^{-1}\right|\leq \prod_{l=0}^{j-1} (1+K|x_l|^{k+p+1})\leq \prod_{l=0}^{\infty} (1+K|x_l|^{k+p+1}).
$$
The convergence of the infinite product follows from part (i).
\end{proof}
Define
$$
H(x,y)=y-\mu^{-1}E(x)E(F_1(x,y))^{-1}F_2(x,y)\in\C\{x,y\}.
$$
Using equation~\eqref{eq:ExEF1x}, the identity
$\mu\left(1+x^ka(x)\right)=J_{k+p}\left(\exp\left(\log \mu+x^kA(x)\right)\right)$ and the expression of $F_2$,
we obtain that
$$
H(x,y)=O(x^{k+p+1}y,x^ky^2,x^{k+p+m}).
$$

\begin{proposition}\label{pro:contraction-map}
If $\varepsilon>0$ is sufficiently small and we put $x_j=f_{u}(x_{j-1})$ for ${j\geq 1}$, for any $u\in\mc H^m_\varepsilon$ and any $x_0\in R_{d,e,\varepsilon}$, then the series
$$
Tu(x_0)=\sum_{j\ge0}\mu^{-j}E(x_0)E(x_j)^{-1}H(x_j,u(x_j))
$$
is normally convergent and defines an element $Tu\in\mc H^m_\varepsilon$. Moreover, $T\colon u\mapsto Tu$ is a contracting map from $\mc H^m_{\varepsilon}$ to itself and $u\in\mc{H}^m_\varepsilon$ is a fixed point of $T$ if and only if $u$ satisfies equation \eqref{eq:invariance}.
\end{proposition}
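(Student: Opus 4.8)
The plan is to solve the invariance equation \eqref{eq:invariance} by the Banach fixed point theorem for $T$ on the complete metric space $\mc H^m_\varepsilon$ (a closed subset of $\mc B^m_\varepsilon$), splitting the work into three independent parts: the equivalence between fixed points and solutions of \eqref{eq:invariance}, the invariance of $\mc H^m_\varepsilon$ under $T$, and the contraction estimate. Throughout, the iterates $x_j=f_u(x_{j-1})$ stay in $R_{d,e,\varepsilon}$ by Lemma~\ref{lem:rde}, each summand is holomorphic in $x_0$, and the two decay estimates of Lemma~\ref{lem:sumofx_j} are the engine of every bound.

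For the equivalence I would first establish the telescoping identity $Tu(x_0)=H(x_0,u(x_0))+\mu^{-1}E(x_0)E(x_1)^{-1}Tu(x_1)$, obtained by isolating the $j=0$ term and reindexing the remainder as the series defining $Tu$ at $x_1=f_u(x_0)$. Granting this, if $u=Tu$ then substituting the definition of $H$ cancels $u(x_0)$ and leaves exactly $u(f_u(x))=F_2(x,u(x))$, i.e. \eqref{eq:invariance}. For the converse, \eqref{eq:invariance} gives $F_2(x_j,u(x_j))=u(x_{j+1})$, so each summand becomes $\mu^{-j}E(x_0)E(x_j)^{-1}u(x_j)-\mu^{-(j+1)}E(x_0)E(x_{j+1})^{-1}u(x_{j+1})$; the partial sums telescope to $u(x_0)-\mu^{-(N+1)}E(x_0)E(x_{N+1})^{-1}u(x_{N+1})$, and the remainder tends to $0$ because its first factor is bounded by $M$ (Lemma~\ref{lem:sumofx_j}(ii)) while $u(x_{N+1})=O(|x_{N+1}|^{m-1})\to0$. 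Hence $Tu=u$.

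For self-mapping, from $H(x,y)=O(x^{k+p+1}y,x^ky^2,x^{k+p+m})$ and $|u(x)|\le|x|^{m-1}$, using $m\ge p+2$ so that the $x^ky^2$ contribution $\lesssim|x|^{k+2m-2}$ is dominated, one gets $|H(x_j,u(x_j))|\le C|x_j|^{k+p+m}$. Combining $|\mu^{-j}E(x_0)E(x_j)^{-1}|\le M$ with $\sum_j|x_j|^{k+p+m}\le K|x_0|^m$ (Lemma~\ref{lem:sumofx_j}, as $k+p+m>k+p$) yields normal convergence and $|Tu(x_0)|\le MCK|x_0|^m\le|x_0|^{m-1}$ for $\varepsilon$ small, so $Tu\in\mc B^m_\varepsilon$ with $\|Tu\|\le1$. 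The derivative bound $|Tu'(x)|\le|x|^{m-p-2}$ I would deduce from Cauchy's inequality on a disc centered at $x$ of radius comparable to $|x|^{p+1}$, which fits inside a slightly enlarged domain $R_{d',e',\varepsilon}$ (the binding constraint being the thin curved boundary $\Imag(x)=e\Real(x)^{r+1}$ with $r\le p$), combined with the sharper bound $|Tu|\le MCK|x|^m$.

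The contraction is the main obstacle. Writing $x_j,\tilde x_j$ for the iterates of $f_u,f_v$ from a common $x_0$ and $\eta=\|u-v\|$, everything hinges on controlling $\Delta_j=x_j-\tilde x_j$. Since $f_u(x)=x-x^{k+p+1}+O(x^{2k+2p+1})$ uniformly in $u$ and $f_u(x)-f_v(x)=O(\eta x^{2k+p+m})$ — both using that the $y$-dependence of $F_1$ is $O(x^{2k+p+1}y)$ by the extra reduction of Remark~\ref{rem:furtherreductions} — one obtains the recursion $|\Delta_{j+1}|\le(1-(k+p+1)\Real(x_j^{k+p})+O(|x_j|^{2(k+p)}))|\Delta_j|+C\eta|x_j|^{2k+p+m}$ with $\Delta_0=0$. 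Comparing the homogeneous factor with $|x_{j+1}|^{k+p+1}/|x_j|^{k+p+1}$, which agree to first order and differ by a summable $O(|x_j|^{2(k+p)})$, and summing the inhomogeneous contributions through Lemma~\ref{lem:sumofx_j}, I expect the bound $|\Delta_j|\le B\eta|x_j|^{k+p+1}$, whence $|\Delta_j|=o(|x_j|)$ and $|\tilde x_j|\asymp|x_j|$. I would then split each summand of $Tu(x_0)-Tv(x_0)$ into a part moving $x_j$ to $\tilde x_j$, estimated by the Lipschitz constants of $E^{-1}$ (whose pole of order $p+1$ contributes a factor $|x_j|^{-p-1}$) and of $H$, and a part replacing $u$ by $v$, estimated by $\partial_yH=O(x^{k+p+1},x^ky)$; each resulting series is summed by Lemma~\ref{lem:sumofx_j} and carries a surplus positive power of $|x_0|$, so for $\varepsilon$ small one gets $\|Tu-Tv\|\le\tfrac12\|u-v\|$. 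The genuine difficulty is precisely keeping the two perturbed parabolic orbits close enough that $|\tilde x_j|\asymp|x_j|$ while absorbing the singular factor $|x_j|^{-p-1}$ of $E^{-1}$, which is exactly why the spaces are weighted by $|x|^{m-1}$ with $m$ chosen large.
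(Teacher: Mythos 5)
Your architecture coincides with the paper's: the same telescoping identity gives the equivalence between fixed points of $T$ and solutions of \eqref{eq:invariance}, the sup bound and normal convergence follow from $H(x,y)=O(x^{k+p+1}y,x^ky^2,x^{k+p+m})$ plus Lemma~\ref{lem:sumofx_j} exactly as in the source \cite{Lop-S} to which the paper defers, and your splitting of $Tu-Tv$ into a part that changes the argument of $H$ and a part that changes the exponential weights, driven by the orbit-comparison estimate $|\Delta_j|\lesssim\|u-v\|\,|x_j|^{k+p+1}$, is the paper's decomposition $U_1+U_2$. The genuine gap is in your verification that $Tu\in\mc H^m_\varepsilon$, specifically the derivative bound $|(Tu)'(x)|\le|x|^{m-p-2}$. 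This cannot be obtained by Cauchy's inequality on discs of radius comparable to $|x|^{p+1}$: the function $Tu$ is defined only on $R_{d,e,\varepsilon}$ (it is built from $u$, which lives there), and for points $x$ whose distance to $\partial R_{d,e,\varepsilon}$ is much smaller than $|x|^{p+1}$ — such points exist at every scale, along both the straight and the curved edges — no disc of that radius centred at $x$ lies inside $R_{d,e,\varepsilon}$. Your observation that the disc fits inside an enlarged domain $R_{d',e',\varepsilon}$ does not help, because $Tu$ is not defined there; and restarting the construction on the enlarged domain is circular, since membership in the corresponding space then requires the derivative bound up to the new boundary, where the same obstruction reappears. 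The bound cannot be dropped or deferred either: your own contraction step consumes it, both to control $f_u'$ (hence $\Delta_j$) and to estimate $|u(x_j)-u(\tilde x_j)|$. The standard way out, which is the one in \cite[Proposition 3.9]{Lop-S}, is to differentiate the series defining $Tu$ term by term, using $dx_j/dx_0=\prod_{l<j}f_u'(x_l)$, whose modulus is comparable to $|x_j|^{k+p+1}/|x_0|^{k+p+1}$, together with estimates of the same kind as Lemma~\ref{lem:sumofx_j}; this works uniformly up to the boundary because it never leaves the domain.

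A lesser point: in the part of $Tu-Tv$ where $x_j$ is moved to $\tilde x_j$, the quantity that must be shown bounded is not just the pole of $r'(x)=-A(x)/x^{p+1}$ but $|\mu|^{-j}$ times the supremum of the exponential weight along the segment joining the two values; this is where the paper spends most of its written proof, obtaining $|\mu|^{-j}\max_{\zeta\in[a,b]}|\exp\zeta|\le \mathrm{e}$ via Lemma~\ref{rem:Rdecontainedsaddledomain} when $|\mu|=1$ and via $K\varepsilon^k<\ln|\mu|$ when $|\mu|>1$. In your set-up this is recoverable — the variation of $\Real\,r$ along $[x_j,\tilde x_j]$ is $O(\|u-v\|\,|x_j|^{k})=O(1)$, so the segment supremum is comparable to the endpoint value controlled by Lemma~\ref{lem:sumofx_j}(ii) — but it must be stated, since without it the series bounding your second part is not visibly summable.
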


\begin{proof}
The normal convergence of the series $Tu(x_0)$ and the fact that $Tu\in\mc{H}^m_\varepsilon$ for all $u\in\mc{H}^m_\varepsilon$, if $\varepsilon$ is sufficiently small, are proved as in \cite[Proposition 3.9]{Lop-S}.

To show that $T$ is a contraction, consider $u,v\in \mc{H}^m_\varepsilon$ and write $Tu(x_0)-Tv(x_0)=U_1+U_2$, with
\begin{align*}
U_1&=\sum\limits_{j\ge0}\mu^{-j}E(x_0)E(x_j)^{-1}\left[H(x_j,u(x_j))-H(z_j,v(z_j))\right]\\
U_2&=\sum\limits_{j\ge0}\mu^{-j}\left[E(x_0)E(x_j)^{-1}-E(x_0)E(z_j)^{-1}\right]H(z_j,v(z_j)),
\end{align*}
where $x_j=f_u^j(x_0)$ and $z_j=f_v^j(x_0)$.
Arguing as in \cite[Proposition 3.9]{Lop-S}, we prove that there exists $B_1>0$ such that
$|U_1|\le B_1|x_0|^m\|u-v\|$. To bound $U_2$, write
$$
r(x)=-\int\dfrac{A(x)}{x^{p+1}}dx=\dfrac{1}{x^p}\left(p^{-1}A_0+(p-1)^{-1}A_1x+\dots+A_{p-1}x^{p-1}\right)-A_p\log x.
$$
As an application of Taylor's formula, we obtain
$$
r(x_1)=r(x_0)+x_0^kA(x_0)+\theta_u(x_0),
$$
where $|\theta_u(x_0)|\le c|x_0|^{k+p+1}$ for some constant $c>0$ independent of $u$.
If we put
$$
E(x_0)E(x_j)^{-1}-E(x_0)E(z_j)^{-1}=\exp a-\exp b,
$$
with $a=r(x_0)-r(x_j)$ and $b=r(x_0)-r(z_j)$, we have
\begin{align*}
|\mu|^{-j}\left|E(x_0)E(x_j)^{-1}-E(x_0)E(z_j)^{-1}\right|
&=|\mu|^{-j}\left|\exp a-\exp b\right|\\
&\le |\mu|^{-j}|a-b|\max\limits_{\zeta\in[a,b]}|\exp\zeta|.
\end{align*}
If $|\mu|=1$, since $\Real(x^kA(x))>0$ for all $x\in R_{d,e,\varepsilon}$ by Lemma~\ref{rem:Rdecontainedsaddledomain}, we have that $\Real(r(x_0)-r(x_1))\le|\theta_u(x_0)|$
and therefore
$$
\Real(r(x_0)-r(x_j))\le \sum_{l=0}^{j-1}c|x_l|^{k+p+1}\le 1
$$
if $\varepsilon$ is sufficiently small, by Lemma~\ref{lem:sumofx_j}. Analogously, $\Real(r(x_0)-r(z_j))\le 1$, and hence $[a,b]\subset\{x\in\C:\Real(x)\le1\}$ so
$$
|\mu|^{-j}\max\limits_{\zeta\in[a,b]}|\exp\zeta|\le \mathrm{e}.
$$
If $|\mu|>1$, there exists a constant $K>0$ such that $|x^kA(x)|\le K\varepsilon^k$ for all $x\in R_{d,e,\varepsilon}$, so
$$
\Real(r(x_0)-r(x_j))\le\sum_{l=0}^{j-1}\left(K\varepsilon^k+c|x_l|^{k+p+1}\right)\le jK\varepsilon^k+1
$$
if $\varepsilon$ is small enough, by Lemma~\ref{lem:sumofx_j}. Analogously, $\Real(r(x_0)-r(z_j))\le jK\varepsilon^k+1$, and hence
$$
|\mu|^{-j}\max\limits_{\zeta\in[a,b]}|\exp\zeta|\le |\mu|^{-j}\exp\left(jK\varepsilon^k\right)\mathrm{e}=\exp\left((K\varepsilon^k-\ln|\mu|)j\right)\mathrm{e}\le \mathrm{e}
$$
for $\varepsilon>0$ sufficiently small. Therefore,
$$
|\mu|^{-j}\left|E(x_0)E(x_j)^{-1}-E(x_0)E(z_j)^{-1}\right|\le \mathrm{e}|r(z_j)-r(x_j)|
$$
and, arguing as in \cite[Proposition 3.9]{Lop-S}, there exists a constant $B_2>0$ such that $|U_2|\le B_2|x_0|^m\|u-v\|$. Therefore,
$|Tu(x_0)-Tv(x_0)|\le(B_1+B_2)|x_0|^m\|u-v\|$, so $T$ is a contraction if $\varepsilon$ is small enough.

Finally, rewriting
\begin{align*}
Tu(x_0)
&=\displaystyle E(x_0)\sum_{j\ge0}\left(\mu^{-j}E(x_j)^{-1}u(x_j)-\mu^{-(j+1)}E(x_{j+1})^{-1}F_2(x_j, u(x_j))\right)\\
&=u(x_0)-\mu^{-1}E(x_0)E(x_1)^{-1}F_2(x_0,u(x_0))+\mu^{-1}E(x_0)E(x_1)^{-1}Tu(x_1)
\end{align*}
we conclude that $u\in\mc{H}^m_\varepsilon$ satisfies equation \eqref{eq:invariance} if and only if $u$ is a fixed point of~$T$.
\end{proof}

The existence of a solution $u\in\mc{H}^m_\varepsilon$ of equation \eqref{eq:invariance} (and hence of a parabolic curve for $F$) follows from Proposition~\ref{pro:contraction-map}, by Banach fixed point theorem. The property of the parabolic curve being asymptotic $\G$ can be proved exactly as in \cite{Lop-S} (showing that $S_m=S_{m'}$ for $m'\geq m$ by uniqueness of the fixed point and that $S_m$ is tangent to $\Gamma$ up to an order which increases with $m$).

To complete the proof of Theorem~\ref{the:saddle}, it only remains to show that if $\{(x_j,y_j)\}$ is an orbit of $F$ asymptotic to $\G$ such that $\{x_j\}$ has $\R^+$ as tangent direction, then $(x_j,y_j)\in S_m$ for $j$ sufficiently big. To prove it, we will need the two following lemmas.

\begin{lemma}\label{lem:tangency}
If $\{(x_j,y_j)\}$ is a stable orbit of $F$ such that $\{x_j\}$ has $\R^+$ as tangent direction and $|y_j|<|x_j|^{p+1}$ for all $j$, then
$$\lim_{j\to\infty} \frac{\Imag(x_j)}{\Real(x_j)^{r+1}}=0.$$
\end{lemma}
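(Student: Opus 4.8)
The plan is to show that, under the hypothesis $|y_j|<|x_j|^{p+1}$, the sequence $\{x_j\}$ behaves like an orbit of the one-dimensional parabolic germ $x\mapsto x-x^{k+p+1}+\cdots$, and then to control $\arg(x_j)$ by passing to a Fatou coordinate. The case $r=0$ is immediate, since it is exactly the tangency hypothesis (the conclusion then reads $\Imag(x_j)/\Real(x_j)=\tan\arg(x_j)\to 0$), so I assume $r\ge 1$ and $|\mu|=1$. First I would discard the $y$-dependence of $F_1$: in the reduced form $x_j^{2k+p+1}y_j=O(x_j^{2k+2p+2})$ by the hypothesis, which is of higher order than the $x^{2k+2p+1}$ term, so
\begin{equation*}
x_{j+1}=x_j-x_j^{k+p+1}+O(x_j^{2(k+p)+1}),\qquad \log\frac{x_{j+1}}{x_j}=-x_j^{k+p}+O(x_j^{2(k+p)}),
\end{equation*}
the principal branch being legitimate for large $j$ because $\{x_j\}$ converges to $0$ tangentially to $\R^+$. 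Setting $v_j=x_j^{k+p}$ gives $v_{j+1}=v_j\bigl(1-(k+p)v_j+O(v_j^2)\bigr)$, so the Fatou coordinate $w_j=1/\bigl((k+p)v_j\bigr)$ satisfies
\begin{equation*}
w_{j+1}=w_j+1+O(v_j)=w_j+1+O(1/w_j).
\end{equation*}

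Next I would extract the asymptotics of $w_j$. Since $x_j\to0$ we have $v_j\to0$, hence $w_{j+1}-w_j\to1$, and Stolz--Cesàro yields $w_j/j\to1$; in particular $\Real(w_j)=j+o(j)\to+\infty$, $|w_j|\sim j$, and therefore $|v_j|=O(1/j)$. Re-inserting this bound, the error terms are summable up to a logarithm, $\sum_{l\le j}|O(v_l)|=O(\log j)$, and because the increment $+1$ is real, only these errors accumulate in the imaginary part: $\Imag(w_j)=O(\log j)$ while $\Real(w_j)=j+O(\log j)$. Using $\arg(v_j)=-\arg(w_j)$ and the tangency hypothesis to pin down the principal branch, I conclude
\begin{equation*}
\arg(x_j)=-\frac{1}{k+p}\arg(w_j)=-\frac{1}{k+p}\arctan\frac{\Imag(w_j)}{\Real(w_j)}=O\!\left(\frac{\log j}{j}\right).
\end{equation*}

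Finally, writing $\phi_j=\arg(x_j)\to0$, one has
\begin{equation*}
\frac{\Imag(x_j)}{\Real(x_j)^{r+1}}=\frac{\sin\phi_j}{|x_j|^{r}\cos^{r+1}\phi_j}\sim\frac{\phi_j}{|x_j|^{r}},
\end{equation*}
and since $|x_j|=|v_j|^{1/(k+p)}\sim\bigl((k+p)j\bigr)^{-1/(k+p)}$, this quantity is $O\bigl(\log j\cdot j^{\,r/(k+p)-1}\bigr)$. The decisive point is the elementary inequality $r\le p<k+p$ (valid because $k\ge 1$): it makes the exponent $r/(k+p)-1$ strictly negative, so the expression tends to $0$, which is the assertion.

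The hard part, I expect, is the error bookkeeping rather than any one estimate: one must check that passing to the Fatou coordinate converts the parabolic leading term into the harmless \emph{real} increment $+1$, leaving only an $O(1/j)$ perturbation, so that $\Imag(w_j)$ grows no faster than $\log j$ and $\arg(x_j)$ is of order $\log j/j$. The conceptual heart, by contrast, is the comparison $r<k+p$, which is precisely what guarantees that the argument of $x_j$ decays faster than $|x_j|^{r}$; this is why the result is tied to the \emph{first significant order} $r$ and not to the full exponent $p$.
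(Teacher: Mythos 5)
Your proof is correct and follows essentially the same route as the paper: both pass to a Fatou coordinate conjugating the $x$-dynamics to (approximately) a unit translation, derive $1/\bigl((k+p)x_j^{k+p}\bigr)=j+O(\log j)$ so that $x_j=(k+p)^{-1/(k+p)}j^{-1/(k+p)}\bigl(1+O(\log j/j)\bigr)$ with real leading term, and conclude from $r<k+p$. The only difference is bookkeeping: the paper uses the corrected coordinate $\psi(x)=\tfrac{1}{(k+p)x^{k+p}}+\rho\log x$ so that the per-step error $O(x_j^{k+p+1})$ is summable and the $O(\log j)$ term appears explicitly from $\rho\log x_j$, whereas you use the plain coordinate and let the $O(1/j)$ errors accumulate to $O(\log j)$ via Stolz--Ces\`aro, which is equivalent.
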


\begin{proof}
We denote by $-\rho+(k+p+1)/2$ the coefficient of $x^{2k+2p+1}$ in $F_1(x,y)$ and consider
$$
\psi(x)=\frac1{(k+p)x^{k+p}}+\rho\log x.
$$
Using the fact that $|y_j|<|x_j|^{p+1}$ for all $j$, we can see that $\psi(x_1)=\psi(x_0)+1+O(x_0^{k+p+1})$, so $\psi(x_j)-j$ is bounded for any $j$, by Lemma~\ref{lem:sumofx_j}. Therefore,
$$
\frac1{(k+p)x_j^{k+p}}=\left(j+O(1)\right)\left(1+O(x_j^{k+p}\log x_j)\right).
$$
Since $\lim_{j\to\infty}(k+p)jx_j^{k+p}=1$, by \eqref{eq:dynamics1}, we get
$$
\frac{1}{(k+p)x_j^{k+p}}=\left(j+O(1)\right)\left(1+O\left(\frac 1j\log j\right)\right)=j+O(\log j)
$$
and hence
$$
x_j=(k+p)^{-1/(k+p)}j^{-1/(k+p)}\left(1+O \left(\frac{\log j}{j} \right)\right).
$$
The quotient $\Imag (x_j)/\Real(x_j)^{r+1}$ satisfies then
$$
\frac{\Imag(x_j)}{\Real(x_j)^{r+1}}
=\frac{(k+p)^{-\frac1{k+p}}j^{-\frac1{k+p}}O\left(\frac{\log j}{j} \right)}{(k+p)^{-\frac{r+1}{k+p}}j^{-\frac{r+1}{k+p}}\left(1+O \left(\frac{\log j}{j} \right)\right)}
=(k+p)^{\frac{r}{k+p}}  j^{\frac{r}{k+p}}  O \left( \frac{\log j}{j} \right).
$$
Since $r<k+p$, $\Imag(x_j)/\Real(x_j)^{r+1}$ tends to $0$ when $j \to \infty$.
\end{proof}

\begin{lemma}\label{lem:est}
If $|\mu|=1$ there exists a constant $c>0$ such that, if  $d,e,\varepsilon$ are small enough, then for every $x\in R_{d,e,\varepsilon}$ we have
$$\Real(x^{k} A(x))\geq c|x|^{k+r}.$$
\end{lemma}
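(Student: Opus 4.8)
The plan is to upgrade the qualitative inclusion of Lemma~\ref{rem:Rdecontainedsaddledomain}, namely $R_{d,e,\varepsilon}\subset\{\Real(x^kA(x))>0\}$, into the stated quantitative bound of order $|x|^{k+r}$, following the same two-case scheme. When $r=0$ we have $\Real(A_0)>0$, and the estimate is carried by the leading term: writing $x^kA(x)=A_0x^k+\sum_{j=1}^pA_jx^{k+j}$, every $x\in R_{d,e,\varepsilon}$ has argument as small as we wish once $d,e$ are small, so $\Real(A_0x^k)\ge\frac34\Real(A_0)|x|^k$, while the remaining terms are bounded by $C|x|^{k+1}\le C\varepsilon|x|^k$. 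Choosing $\varepsilon$ small then gives $\Real(x^kA(x))\ge\frac12\Real(A_0)|x|^k$, which is the claim for $r=0$.

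The case $r\ge1$ is the substantial one. Here $\Real(A_j)=0$ for $0\le j<r$, so in particular $A_0$ is purely imaginary; I would assume $\Imag(A_0)<0$, the opposite case being symmetric exactly as in Lemma~\ref{rem:Rdecontainedsaddledomain}. Let $\rho$ be the diffeomorphism of Lemma~\ref{lem:holconj}, so that $x^kA(x)=A_0\rho(x)^k$. Setting $w=\rho(x)$ and using the elementary identity $\Real(-iz)=\Imag(z)$ together with $A_0=-i|A_0|$, one obtains the clean identity $\Real(x^kA(x))=|A_0|\,\Imag(w^k)$. Thus everything reduces to bounding $\Imag(\rho(x)^k)$ below by a positive multiple of $|x|^{k+r}$, which is the quantitative form of the assertion $\rho(R_{d,e,\varepsilon})\subset\{\Imag(x^k)>0\}$ already used in Lemma~\ref{rem:Rdecontainedsaddledomain}.

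To estimate $\Imag(w^k)$ I would invoke Lemma~\ref{lem:imagecurve}: the image $\rho(R_{d,e,\varepsilon})$ is enclosed between a curve $\Imag(w)=\beta\,\Real(w)^{r+1}+\cdots$ with $\beta:=-d+\Imag(\rho_{r+1})$ and the curve $\Imag(w)=2e\,\Real(w)$. Since $\Imag(\rho_{r+1})>0$ by Lemma~\ref{lem:holconj}, taking $d$ small makes $\beta>0$. Writing $w=se^{i\psi}$, this forces $0<\psi\le\arctan(2e)$, and on the lower boundary $\tan\psi=\Imag(w)/\Real(w)\ge\frac\beta2\Real(w)^r$ for $\Real(w)$ small; the same lower bound holds throughout the region, since at fixed $\Real(w)$ the quotient $\Imag(w)/\Real(w)$ only increases above the boundary. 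Because $\Real(w)\ge\frac12 s$ for $e$ small, one gets $\psi\ge\gamma s^r$ for some $\gamma>0$, hence $\sin(k\psi)\ge\frac2\pi k\psi\ge c' s^r$ as $k\psi\le\pi/2$. Combined with $s=|\rho(x)|\ge\frac12|x|$ this yields $\Imag(w^k)=s^k\sin(k\psi)\ge c''|x|^{k+r}$ and therefore $\Real(x^kA(x))=|A_0|\,\Imag(w^k)\ge c|x|^{k+r}$.

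The main obstacle is making this lower bound uniform over all of $R_{d,e,\varepsilon}$, and in particular over its wide part near the slope-$e$ boundary, where $\Imag(x)$ is of order $|x|$ rather than $|x|^{r+1}$, so that one cannot simply estimate $\Imag(w^k)$ by its value on a single boundary curve. The resolution is precisely the shift recorded in Lemma~\ref{lem:imagecurve}: after applying $\rho$, the lowest part of the image, coming from $\Imag(x)=-d\,\Real(x)^{r+1}$, is pushed up to $\Imag(w)\sim\beta\,\Real(w)^{r+1}$ with $\beta>0$, so the minimum of $\arg(w)$ over the whole image is still comparable to $\Real(w)^r$; this minimal value is what controls $\sin(k\psi)$ from below, and every other point of the region, having larger argument within the range $[0,\pi/2]$ where $\sin(k\,\cdot\,)$ is increasing, only gives a larger value of $\Imag(w^k)$.
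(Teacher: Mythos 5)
Your proposal is correct and follows essentially the same route as the paper's proof: the same split into the cases $r=0$ and $r\ge1$, the same reduction via Lemma~\ref{lem:holconj} to estimating $\Real(A_0 w^k)$ on $\rho(R_{d,e,\varepsilon})$, the same use of Lemma~\ref{lem:imagecurve} to get a lower bound of order $\Real(w)^r$ on the argument, and the same sine estimate to conclude. The only difference is expository: you make explicit the uniformity over the region (monotonicity of $\Imag(w)/\Real(w)$ above the lower boundary curve), which the paper states without detail as $d'|x|^r<\arg x<\pi/(2k)$.
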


\begin{proof}
If $r=0$, we have
$$
\Real(x^k A(x))\geq\Real(A_0x^k)/2\geq c|x|^k
$$
for $x\in R_{d,e,\varepsilon}$ if $d,e,\varepsilon$ are small enough, where $c=\Real(A_0)/3$.
	
If $r>0$, using the diffeomorphism $\rho(x)=x+\sum_{j\ge2}\rho_jx^j$ of Lemma~\ref{lem:holconj}, it suffices to show that $\Real(A_0x^{k})\geq c|x|^{k+r}$ for every $x\in\rho(R_{d,e,\varepsilon})$, for some $c>0$. Without loss of generality, we can assume $\Imag(A_0)<0$, so $\Imag(\rho_{r+1})>0$. The set $\rho(R_{d,e,\varepsilon})$ is enclosed between two curves of the form
$$
\Imag (x)=(-d+\Imag(\rho_{r+1}))\Real(x)^{r+1}+\cdots \quad \hbox{and} \quad	\Imag (x) = 2e\Real(x),
$$
by Lemma~\ref{lem:imagecurve}. Notice that $-d +\Imag(\rho_{r+1})$ is positive if $d$ is sufficiently small. The elements of  $\rho(R_{d,e,\varepsilon})$ satisfy $d'|x|^{r} <\arg x <\pi/(2k)$ for some $d'>0$ if $d,e,\varepsilon$ are small enough. Then, since sine is an increasing function in $(0,\pi/2)$, we obtain
$$
\Real(A_0x^k)=-\Imag(A_0)|x|^k\sin(k \arg x)  \geq -\Imag(A_0)|x|^k \sin (kd'|x|^r)\geq c|x|^{k+r}
$$
in $\rho(R_{d,e,\varepsilon})$ if $d,e,\varepsilon$ are small enough, where $c= - \Imag(A_{0})kd'/2$.
\end{proof}

Let $\{(x_j,y_j)\}$ be an orbit of $F$ asymptotic to $\G$, such that $\{x_j\}$ has $\R^+$ as tangent direction.  We consider the sectorial change of coordinates $(x,y)\in R_{d,e,\varepsilon}\times B(0,\varepsilon)\mapsto (x,y-u(x))$, where $u\in\mc{H}^m_\varepsilon$ is the solution of equation \eqref{eq:invariance}, so that the parabolic curve $S_m$ becomes the $x$-axis and $F$ is written as
\begin{align*}
F_1(x,y)&=x-x^{k+p+1}+O(x^{2k+p+1}y,x^{2k+2p+1}) \\
F_2(x,y)&=\mu y\left[1+x^k a(x)+O(x^{k+p+1})\right].
\end{align*}
Since $\{(x_j,y_j)\}$ is asymptotic to $\G$ and $S_m=(y=0)$ is also asymptotic to $\G$, we have that $|y_j|<|x_j|^{p+1}$ if $j$ is big enough. Then, by Lemma~\ref{lem:tangency}, for any $d,e,\varepsilon>0$ we have that $x_j\in R_{d,e,\varepsilon}$ if $j$ is big enough. Then, we have
$$
|\mu|\left|1+x_j^k a_j(x)+O(x_j^{k+p+1})\right|=
|\mu|\left|\exp\left(x_j^kA(x_j)\right)+O(x_j^{k+p+1})\right|>1
$$
for $j$ big enough, since either $|\mu|>1$ or $|\mu|=1$ and $\Real(x_j^kA(x_j))\ge c|x_j|^{k+r}$, by Lemma~\ref{lem:est}. Therefore, the orbit $\{(x_j,y_j)\}$ can only converge to 0 if $y_j=0$ for all $j$ big enough. This ends the proof of Theorem~\ref{the:saddle}.

\section{$\G$-parabolic case: existence of open stable manifolds}\label{sec:node-direction}

In this section, we show that if $\G$ is a parabolic formal invariant curve of $F\in\Diff\Cd2$ such that $(F,\G)$ is reduced, then for every node attracting direction there exists a two-dimensional stable manifold of $F$ in which every orbit is asymptotic to $\G$.

\begin{theorem}\label{the:node}
Consider $F\in\Diff\Cd2$ and a formal invariant curve $\G$ of $F$ such that the pair $(F,\G)$ is in reduced form in some coordinates $(x,y)$. For each  attracting direction of $(F,\G)$ which is a node direction, there exists an open stable manifold of $F$ where every orbit is asymptotic to $\G$. More precisely, if $\ell$ is a node attracting direction of $(F,\G)$, then there exist a connected and simply connected domain $R\subset \C$, with $0\in\partial R$, that contains $\ell$ and some integers $M\ge k+p+2$ and $q\ge p+1$ such that the set
$$S=\left\{(x,y):x\in R, \left|y-J_M\g_2(x)\right|<|x|^q\right\},$$
where $\g(s)=(s,\g_2(s))$ is a parametrization of $\G$, is an open stable manifold of $F$ where every orbit is asymptotic to $\G$. Moreover, if $\{(x_n,y_n)\}$ is an orbit of $F$ asymptotic to $\G$ such that $\{x_n\}$ has $\ell$ as tangent direction, then $(x_n,y_n)\in S$ for all $n$ sufficiently big.
\end{theorem}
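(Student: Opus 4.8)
The plan is to mirror the proof of Theorem~\ref{the:saddle}, replacing the graph-transform that produced a one-dimensional invariant graph by a direct trapping-region argument for a two-dimensional tube. After a linear change of coordinates I would assume $\ell=\R^+$, and then apply Remark~\ref{rem:furtherreductions} to obtain coordinates in which $\G$ has contact of order at least $k+p+m$ with the $x$-axis, for arbitrarily large $m$ (and correspondingly large $M$), and
\[
y\circ F(x,y)=\mu\bigl[y(1+x^ka(x))+O(x^{k+p+1}y,x^{k+p+m})\bigr],
\]
so that, writing $Y=y-J_M\g_2(x)$, the tube $S$ becomes $\{x\in R,\ |Y|<|x|^q\}$. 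I would take $R$ of the same type as the sets $R_{d,e,\varepsilon}$ of Section~\ref{sec:saddle-direction} (a sector, or a cuspidal domain when $|\mu|=1$ and $r_\ell\ge1$), so that the horizontal invariance $F_1(R\times B(0,\varepsilon))\subset R$ follows from the very same computation as in Lemma~\ref{lem:rde}, which does not use the node or saddle character of $\ell$.

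The heart of the matter is the transverse contraction, which is the node counterpart of Lemma~\ref{lem:sumofx_j}(ii). Along an orbit $\{(x_j,y_j)\}$ in $S$ the quantity $w_j:=\mu^{-j}E(x_j)^{-1}y_j$ is approximately conserved, because by \eqref{eq:ExEF1x} the ratio $E(x_{j})/E(x_{j+1})\approx\exp(-x_j^kA(x_j))$ cancels the transverse multiplier $\mu(1+x_j^ka(x_j))\approx\mu\exp(x_j^kA(x_j))$. Hence the distance of any orbit to $\G$ is governed by the product
\[
P_j:=\prod_{l=0}^{j-1}\bigl|\mu(1+x_l^ka(x_l))\bigr|
=\exp\Bigl(j\ln|\mu|+\sum_{l=0}^{j-1}\Real(x_l^kA(x_l))+O(1)\Bigr).
\]
For a node direction I would prove, as the analog of Lemmas~\ref{rem:Rdecontainedsaddledomain} and \ref{lem:est}, that $R$ can be chosen inside $\{\Real(x^kA(x))<0\}$ with $\Real(x^kA(x))\le -c|x|^{k+r_\ell}$ when $|\mu|=1$, using the diffeomorphism $\rho$ of Lemma~\ref{lem:holconj} and Lemma~\ref{lem:imagecurve}. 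Combined with the asymptotics $x_j\sim (k+p)^{-1/(k+p)}j^{-1/(k+p)}$ from \eqref{eq:dynamics1} (sharpened as in Lemma~\ref{lem:tangency}), this forces $P_j\longrightarrow 0$ as $j\to\infty$ faster than any power of $|x_j|$: when $|\mu|<1$ the term $j\ln|\mu|$ yields exponential decay, and when $|\mu|=1$ the divergence of $\sum_l|x_l|^{k+r_\ell}$ (note $r_\ell\le p-1$, so $k+r_\ell<k+p$) drives the exponent to $-\infty$ like a positive power of $j$. This is precisely the flatness predicted for the toy model $\EXP(Z)$ in the introduction.

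With this estimate I would establish the forward invariance $F(S)\subset S$ by a one-step computation: for $|Y|<|x|^q$ one has $|Y'|\le |\mu(1+x^ka(x))|\,|Y|+C|x|^{k+p+m}$, since on $\G$ the value of $Y$ is $O(x^{M+1})$ and the additive error is $O(x^{k+p+m})$. Choosing $q=p+1$ and $m$ (hence $M$) large, the bound $|\mu(1+x^ka(x))|\le 1-c|x|^{k+r_\ell}$ (respectively $\le|\mu|<1$) makes $|x'|^q$ large enough to reabsorb both the contracted term and the additive error, giving $|Y'|<|x'|^q$; here one uses $r_\ell<p$ and $k+p+m>q+k+r_\ell$. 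As $R$ is connected and simply connected and $S$ is an open tube over $R$, it is automatically an analytic submanifold of pure dimension two; every orbit in $S$ has $x_n\to0$ by the Leau--Fatou dynamics of $F_1$ and, by the contraction $P_j\to0$, the distance $|Y_n|$ decreases to its value on $\G$, so the orbit is asymptotic to $\G$ up to the order fixed by $M$.

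The step I expect to be the main obstacle is upgrading this to genuine asymptoticity to all orders, together with the capture statement, while keeping the choices of $R$, $q$ and $M$ mutually compatible. For full asymptoticity I would run the construction for every $m$, producing tubes $S^{(M)}$, and show that an orbit trapped for some $M$ eventually enters $S^{(M')}$ for all $M'>M$; since $P_j\to 0$ contracts the orbit toward $\G$ at each prescribed order, the bootstrap closes and the orbit satisfies the asymptotic expansion of Definition~\ref{def:asymptotic-stable-manifolds} to all orders. For the converse, if $\{(x_n,y_n)\}$ is asymptotic to $\G$ with $\{x_n\}$ tangent to $\ell$, then Lemma~\ref{lem:tangency} places $x_n$ in $R$ and the asymptotic expansion places $y_n$ in the tube for $n$ large, so the orbit is eventually contained in $S$. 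Once the node analog of Lemma~\ref{lem:est} and the compatible choice of parameters are secured, the remaining estimates parallel those of Section~\ref{sec:saddle-direction}.
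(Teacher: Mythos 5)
Your proposal is correct, and its geometric skeleton coincides with the paper's: the same cuspidal domains (mirrored relative to the saddle case, precisely so that the node inequality $\Real(x^kA(x))\le -c|x|^{k+r}$ holds on all of $R$, exactly as in Proposition~\ref{pro:basin}), the same forward-invariant tube of width $|x|^{p+1}$ around a high-order truncation of $\G$ obtained from Remark~\ref{rem:furtherreductions}, and the same capture argument via Lemma~\ref{lem:tangency}. Where you genuinely diverge is the core step, asymptoticity of the orbits inside the tube. The paper proves the flatness statement as Lemma~\ref{lem:tangency2}, namely $|\mu|^j|E(x_j)|/|x_j|^l\to0$, by function-theoretic analysis of the primitive $E$ on a restricted domain $\widetilde R\subset R$; this needs an extra conjugation lemma at the level of the primitive (Lemma~\ref{lem:merconj}), the estimate $\Real(\overline{A_0}x^p)\le-c|x|^{p+r}$ there, and Lemma~\ref{lem:tangency} again to place orbits in $\widetilde R$; the additive error $O(x^{k+p+N})$ is then dispatched by a dichotomy rather than a summation: for each order $N$ the narrow tube $D$ of width $|x|^{N+1}$ is invariant, and an orbit that never enters it lives in $U=\{|x|^{N+1}\le|y|<2|x|^{p+1}\}$, where $y$ dominates the additive term, so $y_j/E(x_j)\approx\mu^jy_0/E(x_0)$, and flatness of $|\mu|^jE(x_j)$ contradicts $|y_j|\ge|x_j|^{N+1}$. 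You instead get flatness at the orbit level: $P_j\le C\exp\bigl(j\ln|\mu|-c\sum_{l<j}|x_l|^{k+r}\bigr)$, and by \eqref{eq:dynamics1} one has $\sum_{l<j}|x_l|^{k+r}\gtrsim j^{(p-r)/(k+p)}\to\infty$ since $r<p$, so $P_j$ decays faster than any power of $|x_j|\asymp j^{-1/(k+p)}$; this uses only the derivative-level node inequality already needed for invariance and dispenses with $E$, Lemma~\ref{lem:merconj} and $\widetilde R$ altogether (the two flatness statements are in fact equivalent up to bounded factors, by telescoping \eqref{eq:ExEF1x} along the orbit). The price is that the additive error must be handled by an explicit Duhamel sum, $|Y_j|\lesssim P_j+\sum_{l<j}(P_j/P_{l+1})|x_l|^{k+p+m}$ — your phrase ``$|Y_n|$ decreases to its value on $\G$'' becomes precise only after splitting this sum, say at $l=j/2$: the early part is flattened by $P_j/P_{l+1}$, the late part contributes $O(j\,|x_j|^{k+p+m})=O(|x_j|^{m})$ — followed by the bootstrap over the tubes $S^{(M)}$ that you rightly flag as the main obstacle; your plan for it closes, since $|Y_j^{(M)}|\lesssim|x_j|^{m}$ forces the orbit into the invariant tube $S^{(M')}$ for every $M'>M$, and the contact orders so obtained grow with $M'$, yielding asymptoticity in the sense of Definition~\ref{def:asymptotic-stable-manifolds}. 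In short, your summation-plus-bootstrap replaces the paper's primitive-analysis-plus-dichotomy: yours is more self-contained and needs fewer auxiliary lemmas, while the paper's dichotomy avoids any step-by-step accumulation estimate because inside $U$ the additive term is absorbed for free.
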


The rest of the section is devoted to the proof of Theorem~\ref{the:node}. Up to a linear change of coordinates, we can assume without loss of generality that $\ell=\R^+$; in the case $|\mu|=1$, we denote by $r$ its first significant order. Observe that $r<p$. For $d,e, \varepsilon>0$, we define the set $R_{d,e,\varepsilon}$ as follows.
\begin{itemize}
\item If $|\mu|<1$ or $|\mu|=1$ and $r=0$, then
$$
R_{d,e,\varepsilon}=\{x\in\C: |x|<\varepsilon, -d\Real (x)<\Imag (x)<e\Real(x)\}.
$$
\item If $|\mu|=1$, $r\ge1$ and $\Imag(a(0))>0$, then
$$
R_{d,e,\varepsilon}=\{x\in\C: |x|<\varepsilon, -d\Real (x)^{r+1}<\Imag (x)<e\Real (x)\}.
$$
\item If $|\mu|=1$, $r\ge1$ and $\Imag(a(0))<0$, then
$$
R_{d,e,\varepsilon}=\{x\in\C: |x|<\varepsilon, -d\Real (x)<\Imag (x)<e \Real(x)^{r+1}\}.
$$
\end{itemize}

As mentioned in Remark~\ref{rem:furtherreductions}, to prove the asymptoticity of the orbits inside the stable manifold we will need to consider successive changes of coordinates in which the order of contact of $\G$ with the $x$-axis is arbitrarily high. Therefore, we consider an arbitrary $m\ge p+2$. By Remark~\ref{rem:furtherreductions}, after a polynomial change of variables and a finite sequence of blow-ups we can find some coordinates $(\xx m,\yy m)$, with $(x,y)=\phi(\xx m,\yy m)=\left(\xx m,\xx m^t\yy m+J_M\g_2(\xx m)\right)$ for some $t\ge0$ and some $M\ge k+p+2$, such that $F$ is written
\begin{align*}
\xx m\circ F\,(\xx m,\yy m)&=F_1\,(\xx m,\yy m)=\xx m-\xx m^{k+p+1}+O(\xx m^{2k+p+1}\yy m,\xx m^{2k+2p+1}) \\
\yy m\circ F\,(\xx m,\yy m)&=F_2\,(\xx m,\yy m)=\mu\left[\yy m+\xx m^k a(\xx m)\yy m+O(\xx m^{k+p+1}\yy m,\xx m^{k+p+m})\right],
\end{align*}
$\G$ has order of contact at least $k+p+m$ with the $\xx m$-axis (in this case, unlike the case of a saddle attracting direction, we do not need the condition $\Real(A_p)>0$ on the coefficient $A_p$ in the infinitesimal principal part). We define, for $d,e,\varepsilon>0$,
$$S_{d,e,\varepsilon}^{m}=
\left\{(\xx m,\yy m)\in\C^2:\xx m\in R_{d,e,\varepsilon},|\yy m|<|\xx m|^{p+1}\right\}.$$
If we show that, in the coordinates $(\xx m,\yy m)$, the set $S_{d,e,\varepsilon}^{m}$ is a stable manifold where every orbit is asymptotic to $\G$ and which eventually contains every orbit $\{(x_n,y_n)\}$ asymptotic to $\G$ such that $\{x_n\}$ has $\ell$ as tangent direction, then the set $\phi(S_{d,e,\varepsilon}^{m})$ will satisfy the required properties of Theorem~\ref{the:node} in the coordinates $(x,y)$. We will work therefore in the coordinates $(\xx m,\yy m)$, that we still denote $(x,y)$ for simplicity. By the definition of a node direction, we have that either $|\mu|<1$ or $|\mu|=1$ and $\Real(A_j)=0$ for $j=0, \dots, r-1$ and $\Real(A_r)<0$, where
$$\log\mu+x^kA(x)=\log\mu+x^k\left(A_0+A_1x+\dots+A_px^p\right)$$
is the infinitesimal principal part of $(F,\G)$. Note that $A_0=a(0)\neq0$.

\begin{proposition}\label{pro:basin}
If $d,e,\varepsilon>0$ are small enough, then
$$F(S^m_{d,e,\varepsilon})\subset S^m_{d,e,\varepsilon}.$$
\end{proposition}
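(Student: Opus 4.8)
The plan is to verify the two inclusions that together give $F(S^m_{d,e,\varepsilon})\subset S^m_{d,e,\varepsilon}$: for $(x,y)\in S^m_{d,e,\varepsilon}$ I must show (a) $F_1(x,y)\in R_{d,e,\varepsilon}$ and (b) $|F_2(x,y)|<|F_1(x,y)|^{p+1}$. Since $(x,y)\in S^m_{d,e,\varepsilon}$ forces $|y|<|x|^{p+1}<\varepsilon$, we have $(x,y)\in R_{d,e,\varepsilon}\times B(0,\varepsilon)$, and the three node domains $R_{d,e,\varepsilon}$ are of exactly the same shape $A\cap B\cap C$ treated in Lemma~\ref{lem:rde} (with $\{\alpha,\beta\}=\{1,r+1\}$ or $\alpha=\beta=1$). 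Hence the proof of Lemma~\ref{lem:rde} applies verbatim and yields (a), so the whole content of the proposition lies in the contraction estimate (b).

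To prove (b) I would first control the $y$-multiplier. Folding the term $O(x^{k+p+1}y)$ of $F_2$ into the coefficient of $y$ and using $|y|<|x|^{p+1}$, one obtains $|F_2(x,y)|\le |\mu(1+x^ka(x))|\,|x|^{p+1}+C|x|^{k+2p+2}+C'|x|^{k+p+m}$ for constants $C,C'$ uniform on $S^m_{d,e,\varepsilon}$. From the identity $\mu(1+x^ka(x))=\exp(\log\mu+x^kA(x))+O(x^{k+p+1})$ I would estimate $|\mu(1+x^ka(x))|\le|\mu|\exp(\Real(x^kA(x)))+C''|x|^{k+p+1}$. If $|\mu|<1$ this is bounded by a constant $\lambda_0<1$ for $\varepsilon$ small, so the leading term $\lambda_0|x|^{p+1}$ of $|F_2|$ already beats the lower bound $|F_1(x,y)|^{p+1}\ge|x|^{p+1}(1-C|x|^{k+p})$ coming from $|F_1/x|=1-\Real(x^{k+p})+O(|x|^{2k+2p})$ (after absorbing the $y$-dependent error of $F_1$ via $|y|<|x|^{p+1}$); the gap $(1-\lambda_0)|x|^{p+1}$ dominates all remaining terms and gives (b).

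The delicate case, which I expect to be the main obstacle, is $|\mu|=1$. Here I would establish the node counterpart of Lemma~\ref{lem:est}: since $\ell$ is a node direction we have $\Real(A_j)=0$ for $j<r$ and $\Real(A_r)<0$, and the same geometric argument as in Lemma~\ref{lem:est} (transporting $R_{d,e,\varepsilon}$ by the diffeomorphism $\rho$ of Lemma~\ref{lem:holconj} and using the monotonicity of sine) now yields the reversed inequality $\Real(x^kA(x))\le -c|x|^{k+r}$ on $R_{d,e,\varepsilon}$ for some $c>0$. Consequently $|\mu(1+x^ka(x))|\le 1-c'|x|^{k+r}$ for $\varepsilon$ small, because $k+r<k+p+1$ makes the $O(|x|^{k+p+1})$ error negligible. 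Substituting this into the bound for $|F_2|$ gives $|F_2(x,y)|\le |x|^{p+1}-c'|x|^{k+r+p+1}+C|x|^{k+2p+2}+C'|x|^{k+p+m}$, to be compared with $|F_1(x,y)|^{p+1}\ge |x|^{p+1}-C|x|^{k+2p+1}$.

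The proof then reduces to bookkeeping of exponents. The favorable term is $-c'|x|^{k+r+p+1}$, while the competing errors carry exponents $k+2p+1$ (from $|F_1|^{p+1}$), $k+2p+2$, and $k+p+m$. Because $r<p$ one has $k+r+p+1<k+2p+1$, and because $m\ge p+2$ one has $k+p+m\ge k+2p+2>k+r+p+1$; hence $|F_1(x,y)|^{p+1}-|F_2(x,y)|\ge c'|x|^{k+r+p+1}(1-o(1))>0$ for $|x|$ small, i.e. for $\varepsilon$ small. This establishes (b) and completes the proof. The only genuinely new ingredient beyond the saddle-case machinery is the reversed sign estimate $\Real(x^kA(x))\le-c|x|^{k+r}$, and the inequality $r<p$ is exactly what guarantees that the $y$-contraction is strong enough to keep the forward orbit inside the cone $|y|<|x|^{p+1}$.
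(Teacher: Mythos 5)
Your proof is correct and follows essentially the same route as the paper's: the $x$-component is handled by invoking the argument of Lemma~\ref{lem:rde}, and the $y$-contraction rests on the identity $\mu\left(1+x^ka(x)\right)=\mu\exp\left(x^kA(x)\right)+O(x^{k+p+1})$ together with, in the case $|\mu|=1$, exactly the reversed Lemma~\ref{lem:est} estimate $\Real(x^kA(x))\le -c|x|^{k+r}$ on the node domain, which is also the paper's key step. The only difference is organizational (you compare $|F_2|$ and $|F_1|^{p+1}$ separately with explicit exponent bookkeeping, while the paper bounds the ratio $\left|F_2/F_1^{p+1}\right|$ directly), and your bookkeeping, using $r<p$ and $m\ge p+2$, is sound.
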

\begin{proof}
Arguing exactly as in Lemma~\ref{lem:rde}, we have that
$$F_1(S^m_{d,e,\varepsilon})\subset R_{d,e,\varepsilon}
$$
if $d,e,\varepsilon>0$ are sufficiently small. If $(x,y)\in S^m_{d,e,\varepsilon}$, using the identity $\mu\left(1+x^ka(x)\right)=J_{k+p}\left(\mu\exp\left(x^kA(x)\right)\right)$, we have that \begin{align*}
\left|\frac{F_2(x,y)}{F_1(x,y)^{p+1}} \right|
& = \left|\frac{\mu y\left(\exp(x^k A(x))+O(x^{k+p+1})\right) + O(x^{k+p+m})}{(x-x^{k+p+1}+ O(x^{2k+2p+1} ))^{p+1}}\right|\cr
&\leq |\mu|\left|\frac{y}{x^{p+1}}\right| \left|\exp (x^kA(x))+O(x^{k+p+1})
\right||1+ O(x^{k+p})|+O(x^{k+m-1})\cr
&<|\mu|\left|\exp (x^kA(x))+O(x^{k+p+1})
\right||1+ O(x^{k+p})|+O(x^{k+m-1}).
\end{align*}
If $|\mu|<1$, we conclude that $\left|F_2(x,y)/F_1(x,y)^{p+1}\right|< 1$ if $\varepsilon>0$ is small enough, so $F(S^m_{d,e,\varepsilon})\subset S^m_{d,e,\varepsilon}$. If $|\mu|=1$, arguing as in Lemma~\ref{lem:est} (with the only difference that in this case $\Real(A_r)<0$ and $\Imag (A_0)\Imag(\rho_{r+1})>0$, where $\rho$ is the diffeomorphism of Lemma~\ref{lem:holconj}), there exists a constant $c>0$ such that
$$\Real(x^{k}A(x))\leq -c|x|^{k+r}$$
for all $x\in R_{d,e,\varepsilon}$, if $d,e,\varepsilon$ are small enough. Then, we get
\begin{align*}
\left|\frac{F_2(x,y)}{F_1(x,y)^{p+1}} \right| &\leq \left(1-c|x|^{k+r} + |O(x^{k+r+1})|\right) |1+ O(x^{k+p})| + O(x^{k+m-1}) \\
&\leq 1 -c |x|^{k+r} + O(x^{k+r+1})<1
\end{align*}
for any $(x,y) \in S^m_{d,e,\varepsilon}$, if $d,e,\varepsilon>0$ are small enough, so $F(S^m_{d,e,\varepsilon}) \subset  S^m_{d,e,\varepsilon}$.
\end{proof}

Consider $d,e,\varepsilon>0$ such that Proposition~\ref{pro:basin} holds. For any $(x_0,y_0)\in  S^m_{d,e,\varepsilon}$, arguing as in the classical Leau-Fatou Flower Theorem, we have that $\lim_{j\to\infty}(k+p)jx_j^{k+p}=1$, where $(x_j,y_j)=F(x_{j-1},y_{j-1})$, and therefore, by the definition of $S^m_{d,e,\varepsilon}$, we have that $\lim_{j\to\infty}(x_j,y_j)=0$, so $S^m_{d,e,\varepsilon}$ is a stable manifold of $F$. Moreover, if
$\{(x_j,y_j)\}$ is an orbit of $F$ asymptotic to $\G$ such that $\{x_j\}$ has $\R^+$ as tangent direction, then $x_j\in R_{d,e,\varepsilon}$ if $j$ is big enough, by Lemma~\ref{lem:tangency}, and $|y_j|<|x_j|^{p+1}$ if $j$ is big enough, since the order of contact of $\G$ with the $x$-axis is at least $k+p+m$. Hence, $(x_j,y_j)\in S^m_{d,e,\varepsilon}$ if $j$ is sufficiently big.

The rest of the proof is devoted to showing that every orbit in $S^m _{d,e,\varepsilon}$ is asymptotic to $\G$. We define, as in the proof of Theorem~\ref{the:saddle},
$$E(x)=\exp\left(-\int\frac{A(x)}{x^{p+1}}dx\right).$$

\begin{lemma}\label{lem:merconj}
Suppose $|\mu|=1$ and $r>0$. Then there exists a germ of diffeomorphism of the form $\zeta(x)=x+\sum_{j=2}^{\infty}\zeta_j x^j$ such that
$$
-\frac{A_0}{p\zeta(x)^p}=\int\frac{J_{p-1}A(x)}{x^{p+1}} dx,
$$
with $\zeta_j\in\R$ for any $2\leq j\leq r$ and $\zeta_{r+1}\not\in\R$. Moreover, $\Imag(A_0)\Imag(\zeta_{r+1})<0$.
\end{lemma}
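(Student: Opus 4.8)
The plan is to solve the functional equation explicitly by direct integration and then read off the coefficients of $\zeta$, following the same strategy used in the proof of Lemma~\ref{lem:holconj}.

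First I would compute the right-hand side. Writing $J_{p-1}A(x)=\sum_{j=0}^{p-1}A_jx^j$, every monomial contributes $\int A_j x^{j-p-1}\,dx=\frac{A_j}{j-p}x^{j-p}$, and since $j-p-1\le -2$ for all $0\le j\le p-1$ no logarithmic term appears; this is precisely the reason the truncation $J_{p-1}$ (rather than $A$ itself) is used, as it is what makes the primitive a genuine Laurent monomial expression whose inverse is a power series. Choosing the integration constant to be $0$, I obtain
\[
\int\frac{J_{p-1}A(x)}{x^{p+1}}\,dx=-\frac{A_0}{p}x^{-p}\,Q(x),\qquad Q(x):=1+\sum_{j=1}^{p-1}\frac{p\,A_j}{(p-j)A_0}\,x^j .
\]
The defining equation $-A_0/(p\zeta(x)^p)=-\frac{A_0}{p}x^{-p}Q(x)$ then becomes $(x/\zeta(x))^p=Q(x)$, and since $Q(0)=1$ the principal branch of the $p$-th root is a well-defined unit, so I would set $\zeta(x):=xQ(x)^{-1/p}$. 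This $\zeta$ is a germ of diffeomorphism with $\zeta'(0)=1$, i.e. of the form $\zeta(x)=x+\sum_{j\ge 2}\zeta_jx^j$, and it solves the equation by construction.

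Next I would determine the nature of the coefficients. Write $Q(x)^{-1/p}=1+\sum_{l\ge 1}c_lx^l$, so that $\zeta_{j}=c_{j-1}$ for $j\ge 2$; the $c_l$ are universal polynomials with \emph{real} coefficients in $q_1,\dots,q_l$, where $q_j=pA_j/((p-j)A_0)$, because the binomial series of $(1+u)^{-1/p}$ has real coefficients. By the definition of the first significant order $r$ (after the normalization $\ell=\R^+$) one has $\Real(A_j)=0$ for $0\le j\le r-1$ and $\Real(A_r)\neq 0$; in particular $A_0,\dots,A_{r-1}$ are purely imaginary, hence each ratio $A_j/A_0$ with $j\le r-1$ is real, and so are $q_1,\dots,q_{r-1}$ and therefore $c_1,\dots,c_{r-1}$. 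This gives $\zeta_2,\dots,\zeta_r\in\R$. For the index $r$ I would isolate the linear term: from $(1+u)^{-1/p}=1-\frac1p u+\cdots$ one gets $c_r=-\frac1p q_r+(\text{real})$, where the real part comes only from $q_1,\dots,q_{r-1}$. Since $A_0$ is purely imaginary and $\Real(A_r)\neq 0$, a direct computation gives $\Imag(A_r/A_0)=-\Real(A_r)/\Imag(A_0)\neq 0$, so $q_r\notin\R$ and hence $\zeta_{r+1}=c_r\notin\R$.

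Finally I would verify the sign. From $c_r=-\frac1p q_r+(\text{real})$ and $q_r=\frac{p}{p-r}\frac{A_r}{A_0}$ I get
\[
\Imag(\zeta_{r+1})=-\tfrac1p\Imag(q_r)=-\tfrac1p\cdot\frac{p}{p-r}\Imag\!\Big(\tfrac{A_r}{A_0}\Big)=\frac{\Real(A_r)}{(p-r)\,\Imag(A_0)},
\]
whence $\Imag(A_0)\Imag(\zeta_{r+1})=\Real(A_r)/(p-r)$. Since $\ell$ is a node direction we have $\Real(A_r)<0$, and $p-r>0$ because $r<p$; therefore $\Imag(A_0)\Imag(\zeta_{r+1})<0$, as required. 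I expect the only delicate step to be this last bookkeeping: correctly extracting the linear-in-$q_r$ contribution to $c_r$ and tracing the purely imaginary character of $A_0$ through to the final sign, everything else being the routine construction of $\zeta$ as a $p$-th root.
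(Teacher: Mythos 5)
Your proof is correct and in substance the same as the paper's: the paper solves the defining equation for the coefficients recursively, obtaining $A_1=-(p-1)A_0\zeta_2$ and $A_j=A_0\left(-(p-j)\zeta_{j+1}+P_j(\zeta_2,\ldots,\zeta_j)\right)$ with $P_j$ real-coefficient polynomials, while you package that same recursion in closed form as $\zeta(x)=xQ(x)^{-1/p}$ and read the coefficients off the binomial series, both routes giving $\Imag(A_0)\Imag(\zeta_{r+1})=\Real(A_r)/(p-r)<0$ from the node condition $\Real(A_r)<0$ and the fact that $A_0,\ldots,A_{r-1}$ are purely imaginary. Your write-up is, if anything, more explicit than the paper's terse argument, and your observation that the truncation $J_{p-1}$ is exactly what kills the logarithmic term is a point the paper leaves implicit.
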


\begin{proof}
The existence of $\zeta$ follows from the fact that the meromorphic functions $-A_0/(px^p)$ and $\int\frac{J_{p-1}A(x)}{x^{p+1}} dx$ have the same principal term. The properties of $\zeta_j$, $0\le j\le r+1$, follow easily solving the equation recursively. Indeed, we obtain $A_1 =-(p-1)A_0\zeta_2 $ and $A_j=A_0\left(-(p-j)\zeta_{j+1}+ P_{j}(\zeta_2, \ldots,\zeta_j)\right)$ for any $2\leq j <p$ where $P_j$ is a polynomial with real coefficients.
\end{proof}

\begin{lemma}\label{lem:tangency2}
Let $(x_0,y_0) \in S^m_{d,e,\varepsilon}$ and set $(x_j,y_j)=F^j(x_0,y_0)$ for any $j \geq 0$. Then
$$
\lim_{j\to\infty} |\mu|^j\left|\frac{E(x_0)^{-1}E(x_j)}{x_j^l}\right|=0
$$
for any $l\geq 0$.
\end{lemma}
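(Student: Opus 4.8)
The plan is to express the ratio $E(x_0)^{-1}E(x_j)$ as a telescoping product along the orbit and thereby reduce the statement to the control of the real part of a single exponent plus a convergent error factor. Writing \eqref{eq:ExEF1x} at each point $(x_i,y_i)=F^i(x_0,y_0)$ and using that membership in $S^m_{d,e,\varepsilon}$ forces $|y_i|<|x_i|^{p+1}$, so that $x_i^ky_i=O(x_i^{k+p+1})$, I get $E(x_i)E(x_{i+1})^{-1}=\exp(-x_i^kA(x_i))\bigl(1+O(x_i^{k+p+1})\bigr)$. Multiplying these identities for $i=0,\dots,j-1$ telescopes to
$$E(x_0)^{-1}E(x_j)=\exp\Bigl(\sum_{i=0}^{j-1}x_i^kA(x_i)\Bigr)\prod_{i=0}^{j-1}\bigl(1+O(x_i^{k+p+1})\bigr)^{-1}.$$

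Since $k+p+1>k+p$, Lemma~\ref{lem:sumofx_j}(i) yields $\sum_{i\ge0}|x_i|^{k+p+1}<\infty$, so the product $\prod_{i\ge0}\bigl(1+O(x_i^{k+p+1})\bigr)$ converges to a finite nonzero limit and is bounded above and below uniformly in $j$. Hence there is a constant $C\ge1$, independent of $j$, with $|E(x_0)^{-1}E(x_j)|\le C\exp\bigl(\sum_{i=0}^{j-1}\Real(x_i^kA(x_i))\bigr)$, and it suffices to prove that $|\mu|^j|x_j|^{-l}\exp\bigl(\sum_{i=0}^{j-1}\Real(x_i^kA(x_i))\bigr)\to0$. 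By the Leau--Fatou estimates for $\{x_j\}$ (the same as in \eqref{eq:dynamics1}), $|x_j|$ is comparable to $((k+p)j)^{-1/(k+p)}$, so $|x_j|^{-l}$ grows only polynomially in $j$; the whole problem becomes a competition between this polynomial factor and the remaining exponential factors.

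I then split into the two cases. If $|\mu|<1$, I bound $\Real(x_i^kA(x_i))\le K|x_i|^k$ for a fixed $K$, and the size estimate in \eqref{eq:dynamics1} gives $\sum_{i=0}^{j-1}|x_i|^k=O(j^{p/(k+p)})$ (or $O(\log j)$ when $p=0$), which is $o(j)$ because $k/(k+p)\le1$. Consequently $|\mu|^j\exp\bigl(\sum\Real(x_i^kA(x_i))\bigr)=\exp\bigl(j\ln|\mu|+o(j)\bigr)\to0$ as $\ln|\mu|<0$, and the polynomial factor $|x_j|^{-l}$ is harmless. If $|\mu|=1$, I invoke the node bound $\Real(x^kA(x))\le -c|x|^{k+r}$ on $R_{d,e,\varepsilon}$ established in the proof of Proposition~\ref{pro:basin}, so that $\sum_{i=0}^{j-1}\Real(x_i^kA(x_i))\le -c\sum_{i=0}^{j-1}|x_i|^{k+r}$. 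The decisive point is that $r<p$, whence $(k+r)/(k+p)<1$, and the lower bound $|x_i|\ge c'i^{-1/(k+p)}$ from \eqref{eq:dynamics1} yields $\sum_{i=0}^{j-1}|x_i|^{k+r}\ge c''j^{(p-r)/(k+p)}\to\infty$. Therefore $\exp\bigl(\sum\Real(x_i^kA(x_i))\bigr)\le\exp\bigl(-cc''j^{(p-r)/(k+p)}\bigr)$ decays faster than any power of $j$ and again absorbs $|x_j|^{-l}$, giving the limit $0$.

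The main obstacle is precisely this final balance: the conclusion must hold for \emph{every} exponent $l\ge0$, so the exponential factors have to beat arbitrary polynomial growth of $|x_j|^{-l}$. In the attracting case $|\mu|<1$ this is automatic once one verifies that the positive contribution $\sum|x_i|^k$ is subexponential, i.e. $o(j)$; the borderline is $p=0$, where the sum is only logarithmic and the argument still goes through. In the neutral case $|\mu|=1$ everything hinges on the strict inequality $r<p$, which guarantees that $\sum|x_i|^{k+r}$ diverges super-polynomially rather than merely staying bounded, thereby turning the a priori only non-increasing factor $\exp\bigl(\sum\Real(x_i^kA(x_i))\bigr)$ into a genuinely decaying one. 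Extracting the exact growth and decay rates of these $p$-series-type sums from the Leau--Fatou estimate in \eqref{eq:dynamics1} is the only technical step; once it is in hand, the statement follows.
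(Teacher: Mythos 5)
Your proof is correct, but in the crucial case $|\mu|=1$ it follows a genuinely different route from the paper. The paper does not telescope along the orbit: since $|\mu|^j=1$ and $E(x_0)^{-1}$ is a constant, it reduces the claim to the orbit-independent statement $\lim_{x\to 0}|E(x)/x^l|=0$ on a suitable domain, and proves this by analyzing the explicit primitive $E(x)=\exp\left(A_0/(px^p)-A_p\log x+\nu(x)\right)$ after conjugating by the diffeomorphism $\zeta$ of Lemma~\ref{lem:merconj}; this forces the introduction of a shrunken domain $\widetilde R_{d,e,\varepsilon}$ (whose aperture is tied to $\Imag(\zeta_{r+1})$) together with Lemma~\ref{lem:tangency} to guarantee that the orbit eventually enters it, and yields the decay $|E(x)|\le\exp\left(-c/(p|x|^{p-r})\right)$. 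You instead sum the one-step estimates \eqref{eq:ExEF1x} along the orbit, control the error by the convergent product coming from Lemma~\ref{lem:sumofx_j}(i), and convert the pointwise bound $\Real(x^kA(x))\le -c|x|^{k+r}$ on all of $R_{d,e,\varepsilon}$ (already established inside the proof of Proposition~\ref{pro:basin}) into the divergent negative sum $\sum_{i<j}\Real(x_i^kA(x_i))\le -c''j^{(p-r)/(k+p)}$ via the Leau--Fatou rate $|x_i|\asymp i^{-1/(k+p)}$; the strict inequality $r<p$, which the paper records at the start of Section~\ref{sec:node-direction}, is exactly what makes this sum diverge. The two mechanisms are quantitatively equivalent (since $j^{(p-r)/(k+p)}\asymp|x_j|^{-(p-r)}$), but yours is more elementary and self-contained: it bypasses Lemma~\ref{lem:merconj}, the auxiliary domain $\widetilde R_{d,e,\varepsilon}$ and Lemma~\ref{lem:tangency} entirely, at the cost of giving only an orbitwise statement, whereas the paper's argument produces a function-theoretic fact about $E$ on a fixed sectorial domain that is independent of the dynamics. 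In the case $|\mu|<1$ your argument is essentially the paper's one-step contraction $|\mu E(x_i)^{-1}E(x_{i+1})|\le\delta<1$, merely reorganized as a summed exponent with the $o(j)$ bound on $\sum_i|x_i|^k$.
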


\begin{proof}
Assume first that $|\mu|<1$. From equation~\eqref{eq:ExEF1x}, we obtain
$$\mu E(x_0)^{-1}E(x_1)=\mu\exp\left(x_0^kA(x_0)\right)+\theta(x_0),$$
where $|\theta(x_0)|\le K|x_0|^{k+p+1}$ for some $K>0$. Then, $\left|\mu E(x_0)^{-1}E(x_1)\right|\le \delta$ for some $\delta<1$, if $\varepsilon$ is small enough. Hence,
$$|\mu|^j\left|\frac{E(x_0)^{-1}E(x_j)}{x_j^l}\right|\le \delta^j\frac1{|x_j|^l},$$
which tends to 0 when $j\to\infty$, since $\lim_{j\to\infty}(k+p)jx_j^{k+p}=1$ and $\delta<1$.

Assume now that $|\mu|=1$. We define the set $\widetilde{R}_{d,e,\varepsilon}\subseteq R_{d,e,\varepsilon} $ as follows. Let  $\zeta(x)=x+\sum_{j\ge2}\zeta_j x^j$ be the diffeomorphism of Lemma~\ref{lem:merconj}. If $r=0$, then $
\widetilde R_{d,e,\varepsilon}=R_{d,e,\varepsilon}$. If $r\ge1$ and $\Imag(A_0)>0$, then
$$
\widetilde R_{d,e,\varepsilon}=R_{d,e,\varepsilon}\cap \{x\in\C: \Imag (x)<\widetilde e\Real (x)^{r+1}\},
$$
where $0<\tilde e< -\Imag(\zeta_{r+1})$. If $r\ge1$ and $\Imag(A_0)<0$, then
$$
\widetilde R_{d,e,\varepsilon}=R_{d,e,\varepsilon}\cap\{x\in\C:\Imag (x)>-\widetilde d\Real(x)^{r+1}\},
$$
where $\Imag (\zeta_{r+1})>\widetilde d>0$. Notice that, by Lemma~\ref{lem:tangency}, $x_j\in \widetilde R_{d,e,\varepsilon}$ for $j$ sufficiently big.

If $r=0$, then we have
$$
|E(x)|\leq\exp\left(\frac{\mathrm{Re}(A_0)}{2p} \frac{1}{|x|^{p}}\right)
$$
for each $x\in\widetilde R_{d,e,\varepsilon}$ for $d,e,\varepsilon$ small enough, and then $\lim_{j\to\infty} |E(x_j)/x_j^l|=0$ for any $l\geq 0$.

If $r>0$, then thanks to Lemma~\ref{lem:merconj} it suffices to show
$$
\lim_{x\to0\atop x\in\zeta(\widetilde R_{d,e,\varepsilon})}\left|\frac{E(\zeta^{-1}(x))}{\zeta^{-1}(x)^l}\right|=0
$$
for any $l\geq 0$. Notice that $E(\zeta^{-1}(x))=\exp\left(A_0/(px^p)- A_p \log x+\nu(x)\right)$ where $\nu$ is a holomorphic function defined in a neighborhood of $0$. Hence it suffices to prove
$$
\lim_{x\to0\atop x\in\zeta(\widetilde R_{d,e,\varepsilon})}\left|\frac{\exp(A_0/(px^p))}{x^l}\right| =0
$$
for any $l\geq 0$. We have
$$
\left|\exp\left(\frac{A_0}{px^p} \right)\right|
= \exp\left(\Real\left(\frac{A_0}{p x^p} \right)\right) = \exp\left(\frac{1}{p|x|^{2p}}\Real(\overline{A_0}x^p) \right).
$$
The inequality $\Real(\overline{A_0}x^p)\leq -c|x|^{p+r}$ holds in a neighborhood of $0$ in $\zeta (\widetilde R_{d,e,\varepsilon})$ for some $c>0$ analogously as in the proof of Lemma~\ref{lem:est}. Since
$$
\left|\exp\left(\frac{A_0}{p x^p}\right)  \frac{1}{x^l} \right| \leq \exp\left(\frac{-c}{p|x|^{p-r}}\right) \frac{1}{|x|^l},
$$
which tends to 0 when $x\to0$, we obtain $\lim_{j\to\infty} |E(x_j)/x_j^l|=0$ for any $l\geq 0$.
\end{proof}

Consider $(x_0,y_0)\in S^m_{d,e,\varepsilon}$ and denote $(x_j,y_j)=F^j(x_0,y_0)$ for $j \geq 0$. Let us prove that the orbit $\{(x_j,y_j)\}$ is asymptotic to $\G$. Recall that we are considering coordinates $(x,y)=(\xx m,\yy m)$ for which the order of contact of $\G$ with the $x$-axis is at least $k+p+m$. In other words, if $\g(s)=(s,\g_2(s))$ is a parametrization of $\Gamma$, then $\gamma_2$ is at least of order $k+p+m$. We will show that, given any $N\ge m+1$, we have
$$\left|y_j-J_{k+p+N-1} \g_2(x_j)\right|<|x_j|^{N+1}$$
if $j$ is big enough. If we work in the coordinates $(\xx N,\yy N)$ given by $(\xx N,\yy N)=\left(\xx m, \yy m-J_{k+p+N-1}\g_2(\xx m)\right)$, that we will still denote $(x,y)$ for simplicity, we need to show that
$\left|y_j\right|<|x_j|^{N+1}$ if $j$ is big enough. Observe that, since the order of $\g_2(s)$ is at least $k+p+m$ in the coordinates $(\xx m,\yy m)$, in the new coordinates $(x,y)$ we have $|y_j|<2|x_j|^{p+1}$.

Note that, because of Lemma~\ref{lem:tangency}, $x_j\in R_{d,e,\varepsilon}$ for any $d,e,\varepsilon>0$, if $j$ is big enough. If we denote
$$D_{d,e,\varepsilon}=\left\{(x,y)\in\C^2: x\in R_{d,e,\varepsilon}, |y|<|x|^{N+1}\right\},$$
then, with the same proof of Proposition~\ref{pro:basin}, we have that $F(x,y)\in D_{d,e,\varepsilon}$ for any $(x,y)\in D_{d,e,\varepsilon}$, if $d,e,\varepsilon>0$ are small enough. Therefore, it suffices to show that $(x_j,y_j)\in D_{d,e,\varepsilon}$ for infinitely many indexes $j\in\N$. Suppose this last property does not hold. Then, up to replacing $(x_0,y_0)$ with one of its iterates, there exists a domain
$$U=\{(x,y)\in\C^2:|x|^{N+1}\leq |y|<2|x|^{p+1}\}$$
such that $(x_j,y_j) \in U$ for any $j \geq 0$.

Let us see how $y/E(x)$ changes under iteration. We set
$$
H(x,y)=y-\mu^{-1}E(x)E(F_1(x,y))^{-1}F_2(x,y).
$$
As in the proof of Theorem~\ref{the:saddle}, we have
$$
1-\mu^{-1}\left( \frac{F_2(x,y)}{E(F_1(x,y))}\right) {\left(\frac{y}{E(x)} \right)}^{-1}
=
\frac{H(x,y)}{y}= O(x^{k+p+1},x^ky, x^{k+p+m}y^{-1}),
$$
so $H(x,y)/y=O(x^{k+p+1})$ for every $(x,y)\in U$. Therefore we obtain
$$
\left| \frac{y_1}{E(x_1)} \right| = |\mu|\left(1+ O(x_0^{k+p+1})\right) \left| \frac{y_0}{E(x_0)} \right|
$$
for any $j \geq 0$. This leads us to
$$
\left| \frac{y_j}{E(x_j)} \right| = |\mu|^j\left(1+ O(x_0)\right) \left| \frac{y_0}{E(x_0)}\right|
$$
for any $j \geq 0$ by Lemma~\ref{lem:sumofx_j}. Then we obtain
$$
\left|\frac{y_j}{x_j^{N+1}} \right|=\left| \frac{y_j}{E(x_j)} \right| \left| \frac{E(x_j)}{x_j^{N+1}} \right|
\leq
2|\mu|^j\left| \frac{y_0}{E(x_0)}\right| \left|\frac{E(x_j)}{x_j^{N+1}} \right|
$$
for any $j \geq 0$. Applying Lemma~\ref{lem:tangency2}, we obtain that $\lim_{j \to \infty} y_j/x_j^{N+1}=0$, contradicting the fact that $(x_j,y_j) \in U$ for any~${j\geq 0}$. This shows that every orbit in $S^m_{d,e,\varepsilon}$ is asymptotic to $\G$ and ends the proof of Theorem~\ref{the:node}.
	
\begin{remark}\label{rem:asymp-basin}
The open stable manifold $S$ obtained in Theorem \ref{the:node} is not asymptotic to $\Gamma$.
Let us see that we can replace $S$ with another stable manifold that is asymptotic to $\Gamma$ and
contains eventually every orbit $\{(x_n,y_n)\}$ asymptotic to $\Gamma$ such that $\{x_n\}$ has $\ell$
as a tangent direction. Denote
$$U_{j}= S\cap \left\{ (x,y)\in\C^2: \varepsilon/ 2^{j+2} <|x| < \varepsilon/ 2^j\right\}$$
for $j \geq 0$. We have $U_{j} \cap U_{j+1} \neq \emptyset$ by construction
and $F(U_{j}) \cap U_{j} \neq \emptyset$ because $F(S)\subset S$ and $|x\circ F(x,y)-x|\le c|x|^{k+p+1}$ for some $c>0$ and for all $(x,y)\in S$.
For any $N\ge 1$, we define
$$V_N = \left\{(x,y)\in\C^2: \left|y-J_N\g_2(x)\right|<|x|^N\right\},$$
where $\g(s)=(s,\g_2(s))$ is a parametrization of $\Gamma$.
Fix $j \geq 0$.
There exists
$k_j \in {\mathbb N}$
such that
$$F^{k} (x,y) \in V_{1} \cap \dots \cap V_{j+1}$$
for all $(x,y) \in U_j$ and $k \geq k_{j}$. The property is clear for the neighborhood of a single point
$(x,y) \in \overline{U_j}$ and hence it holds for any point of $U_j$ by compactness of $\overline{U_j}$.
We define $W_j = \cup_{k=k_j}^{\infty} F^{k} (U_j)$ for $j \geq 0$ and $W= \cup_{j=0}^{\infty} W_j$.
By construction the set $W$ is an open set.
Moreover $F(U_{j}) \cap U_{j} \neq \emptyset$ implies that $W_j$ is connected.
The sets $W_j$ and $W_{j+1}$ have common points for any $j \geq 0$
since $U_{j} \cap U_{j+1} \neq \emptyset$. Thus $W$ is a connected open set.
Finally we claim that given any $N \geq 1$, a neighborhood of $0$ in $W$ is contained in $V_{N}$. Fix $N \geq 1$.
By compactness of $\overline{U_j}$ for $j \geq 0$ we obtain that a neighborhood of $0$ in
$W_0 \cup \dots \cup W_{N-2}$ is contained in $V_{N}$. By construction
$\cup_{k=N-1}^{\infty} W_k$ is contained in $V_{N}$ and hence a neighborhood of $0$ in $W$ is contained
in $V_{N}$. By the previous discussion the set $W$ is asymptotic to $\Gamma$. Now, given any orbit $\{ (x_n,y_n) \}$ satisfying the hypotheses in Theorem \ref{the:node}
we know that $(x_j,y_j)$ belongs to $S$ for $j$ sufficiently big.
This implies that there exist $j_0, k_0 \in {\mathbb N}$ such that
$(x_{j_0},y_{j_0}) \in U_{k_0}$. Clearly the orbit $\{ (x_n,y_n) \}$ is eventually contained in $W_{k_0}$ and then
in $W$.
\end{remark}

\section{$\G$-parabolic case: conclusion}\label{sec:conclusion}
As a consequence of the results obtained in Sections~\ref{sec:reduction}, \ref{sec:saddle-direction} and \ref{sec:node-direction}, we have the following result, from which Theorem~\ref{th:main2-parabolic} and Theorem~\ref{th:generalizing-Lopez-Sanz} follow.

\begin{theorem}
Consider $F\in\Diff\Cd2$ and let $\G$ be an invariant formal curve of $F$, such that $(F|_\G)'(0)=1$ and $F|_\G\neq\id$. Denote by $r+1$ the order of $F|_\G$. Then, for any sufficiently small neighborhood of the origin, there exists a family $\{S_1,\dots,S_r\}$ of connected and simply connected mutually disjoint stable manifolds of pure positive dimension where every orbit is asymptotic to $\G$ and such that $S_1\cup\dots\cup S_r$ contains the germ of any orbit of $F$ asymptotic to $\G$. If $\dim(S_j)=1$ then $S_j$ is asymptotic to $\G$ and if $\dim(S_j)=2$ then $S_j$ can be chosen to be asymptotic to $\G$. Moreover, if $\spec(DF(0))=\{1,\mu\}$, with $|\mu|\ge1$, then at least $\lceil r/4 \rceil$
stable manifolds $S_j$ have dimension one, where $\lceil r/4 \rceil$ is the least integer greater or equal than $r/4$.
\end{theorem}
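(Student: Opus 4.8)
The plan is to assemble the family $\{S_1,\dots,S_r\}$ from the stable manifolds attached to the $r=k+p$ attracting directions, and then to establish the lower bound on the number of one-dimensional members as a self-contained counting statement. First I would reduce, by means of the procedure of Section~\ref{sec:reduction}, to the situation in which the pair $(F,\G)$ is reduced; since the blow-downs involved are biholomorphic away from the exceptional divisors and preserve both stability and the property of being asymptotic to $\G$, it suffices to produce the family for the reduced pair and push it forward. To each attracting direction $\ell=\xi\R^+$ with $\xi^{k+p}=1$ I attach the stable manifold $S_\ell$ furnished by Theorem~\ref{the:saddle} when $\ell$ is a saddle direction (a parabolic curve, hence one-dimensional, connected and simply connected) and by Theorem~\ref{the:node} when $\ell$ is a node direction (a two-dimensional, connected and simply connected open set, which can moreover be replaced by the asymptotic stable manifold of Remark~\ref{rem:asymp-basin}). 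Choosing the aperture parameters $d,e,\varepsilon$ small enough that the domains about distinct attracting directions are pairwise disjoint in the $x$-variable makes the $S_\ell$ pairwise disjoint. Every orbit asymptotic to $\G$ has its $x$-component governed by the parabolic map $x\mapsto x-x^{k+p+1}+\cdots$, so by the Leau--Fatou Flower Theorem it converges to $0$ along exactly one attracting direction $\ell$; the ``eventually contained'' clauses of Theorems~\ref{the:saddle} and~\ref{the:node} then show that such an orbit is eventually inside $S_\ell$, so that $\bigcup_\ell S_\ell$ captures the germ of every asymptotic orbit. This yields all the assertions except the final count.

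For the lower bound I would first dispose of the case $|\mu|>1$, where every attracting direction is a saddle direction and all $r$ manifolds are one-dimensional, making the bound trivial. The substance is the case $|\mu|=1$, which I would recast as a purely combinatorial statement: writing $\log\mu+x^kA(x)$ with $A(x)=A_0+\cdots+A_px^p$ and $A_0\neq0$ for the infinitesimal principal part, a direction $\xi\R^+$ with $\xi^{k+p}=1$ is a \emph{saddle} direction precisely when the tuple $(\Real(\xi^kA_0),\dots,\Real(\xi^{k+p-1}A_{p-1}))$ is $\geq0$ in the lexicographic order. I would reformulate this sign condition analytically through the polynomial $P(z)=\sum_{j=0}^{p-1}A_jz^{k+j}=z^k(A_0+\cdots+A_{p-1}z^{p-1})$: along the ray $z=\xi t$, $t\to0^+$, one has $\Real P(\xi t)=\sum_j\Real(\xi^{k+j}A_j)\,t^{k+j}$, so $\xi$ is a saddle direction exactly when $\Real P(\xi t)\geq0$ for all sufficiently small $t>0$. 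Thus the count reduces to bounding from below the number of the $n=k+p$ roots of unity lying in the closed non-negative region of the harmonic function $\Real P$ near the origin.

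The counting itself I would carry out by exploiting the leading-order structure of $\Real P$. Since $P(z)=A_0z^k(1+O(z))$ with $A_0\neq0$, the sign of $\Real P(\xi t)$ for small $t$ is governed by $\Real(A_0\xi^k)$ off a finite set of directions, and the locus $\{\Real(A_0z^k)>0\}$ consists of $k$ open sectors of equal aperture $\pi/k$ totalling half of the plane. The distinct values of $\xi^k$ form $m=n/\gcd(k,n)$ equally spaced points, each with multiplicity $\gcd(k,n)=\gcd(k,p)$; counting those in an open half-plane already gives at least $\tfrac{n}{2}-\gcd(k,p)$ saddle directions at this first level, which is $\geq\lceil r/4\rceil$ whenever $\gcd(k,p)\leq n/4$. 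The remaining, ``balanced'' configurations I would treat by induction on $p$: the roots on which the first test $\Real(\xi^kA_0)$ vanishes form at most two cosets of the subgroup $\{\xi:\xi^k=1\}$, and on them the saddle/node alternative is decided by the tuple starting at $\Real(\xi^{k+1}A_1)$, to which the inductive hypothesis applies after accounting for the change in the relevant greatest common divisor.

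The main obstacle is precisely this degenerate alignment: when $k$ and $p$ share a large common factor and $A_0$ is, say, purely imaginary, every root of unity lies on a zero-ray of the leading term $\Real(A_0z^k)$, so the first test is inconclusive for \emph{all} directions and the lexicographic recursion must be driven through several levels with carefully controlled bookkeeping of multiplicities. Showing that, in the worst case over all such alignments and over all admissible $(A_1,\dots,A_{p-1})$, at least a quarter of the directions survive as saddle directions — and that the factor $1/4$, rather than the heuristic $1/2$ suggested by the continuous measure of the saddle sectors, is what can be guaranteed — is the delicate point, and the place where the precise value $\lceil r/4\rceil$ is forced.
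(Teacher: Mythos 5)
The first half of your proposal --- reducing to a reduced pair via Section~\ref{sec:reduction}, attaching to each attracting direction the parabolic curve of Theorem~\ref{the:saddle} or the open stable manifold of Theorem~\ref{the:node} (replaced, in the node case, by the asymptotic one from Remark~\ref{rem:asymp-basin}), and using the Leau--Fatou control of the $x$-component to show that the union captures every asymptotic orbit --- is correct and is exactly the paper's argument.

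The gap is in the count of one-dimensional $S_j$ when $|\mu|=1$. Your first-level bound (at least $n/2-\gcd(k,p)$ directions with $\Real(\xi^kA_0)>0$, where $n=k+p=r$) only settles the cases $\gcd(k,p)\le n/4$, and you defer everything else to an induction on $p$ with ``carefully controlled bookkeeping of multiplicities'' whose worst case you yourself describe as ``the delicate point'': that is, you state what must be shown rather than show it. This is precisely where the content of the statement lies, and the induction as sketched does not obviously close: the balanced directions form at most two cosets of a proper subgroup, so the inductive hypothesis --- a statement about the full group of $n$-th roots of unity and a tuple whose leading coefficient is nonzero --- does not apply to them (the set of directions is no longer a group, and $A_1$ may vanish), and nothing in your sketch rules out the recursion being driven through many levels while losing directions at each step.

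What is missing are two sharper observations that turn the count into a two-step argument with no recursion at all. First, among the $m$ equally spaced values $\xi^k$ (where $m=n/\gcd(k,n)$, each value attained $g=\gcd(k,n)$ times), an open half-plane through the origin contains at least $\lceil m/2\rceil-1$ of them, and $\lceil m/2\rceil-1\geq\lceil m/4\rceil$ whenever $m\geq 3$; this already gives at least $g\lceil m/4\rceil\geq r/4$ saddle directions at the first level for every $m\geq3$, with no help needed from the balanced cosets (your cruder bound $n/2-g$ loses exactly this case, e.g.\ $m=3$). Hence the only genuinely degenerate configuration is $m=2$, which forces $k=p$ and, for total degeneracy, $\Real(A_0)=0$; your ``purely imaginary $A_0$ with large common factor'' scenario can occur only there. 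Second, in that case, letting $t\geq1$ be the first index with $A_t\neq0$ (if none exists, every direction is a saddle direction), one has $\gcd(2k,k+t)<k$: a divisor of $2k$ exceeding $k$ must equal $2k$, which would force $2k\mid k+t$ with $0<k+t<2k$, and $\gcd(2k,k+t)=k$ would force $k\mid t$ with $0<t<k$; both are impossible. Consequently the values $\xi^{k+t}$ run over at least three equally spaced points, the half-plane count applies at this second level, and it again yields at least $r/4$ saddle directions (these are saddle because all earlier entries of the lexicographic tuple vanish). So the ``worst case over all admissible $(A_1,\dots,A_{p-1})$'' never has to be analyzed, and the multi-level bookkeeping you flag as the obstacle never arises.
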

\begin{proof}
Let $\phi$ be a sequence of holomorphic changes of coordinates and blow-ups such that the pair $(\wt F,\wt\G)$ is reduced, where $\wt F$ is the transform of $F$ and $\wt\G$ is the strict transform of $\G$. Denote by $k+1$ and by $k+p+1$ the orders of $F$ and of $F|_\G$, respectively. Notice that $k+p=r$, since the restriction $F|_\G$ is preserved under blow-ups. Since $\phi(\wt S)$ is a stable manifold of $F$ for every stable manifold $\wt S$ of $\wt F$, the existence of the  family $\{S_1,\dots,S_r\}$ of pairwise disjoint connected and simply connected stable manifolds where every orbit is asymptotic to $\G$ follows immediately from Theorems~\ref{the:saddle} and \ref{the:node}. The one-dimensional stable manifolds are asymptotic to $\G$, by Theorem~\ref{the:saddle}, and the two-dimensional ones can be chosen to be asymptotic to $\G$, by Remark~\ref{rem:asymp-basin}.

Let $O$ be an orbit of $F$ asymptotic to $\G$. In some coordinates $(x,y)$, the transform $\wt F$ of $F$ satisfies $x\circ \wt F(x,y)=x-x^{k+p+1}+O(x^{k+p+1}y,x^{2k+2p+1})$. Since $\phi^{-1}(O)=\{(x_n,y_n)\}$ is an orbit of $\wt F$ asymptotic to $\wt \G$, we have that $|y_n|\le|x_n|$ if $n$ is big enough, so, arguing as in Leau-Fatou Flower Theorem, the sequence $\{x_n\}$ has one of the attracting directions of $(\wt F,\wt \G)$ as tangent direction. Applying Theorems~\ref{the:saddle} and \ref{the:node}, we conclude that $O$ is eventually contained in $S_1\cup\dots \cup S_{k+p}$.

To complete the proof of the Theorem, assume that $\spec(DF(0))=\{1,\mu\}$, with $|\mu|\ge1$. Observe that, since the inner eigenvalue is 1, this condition is stable under blow-up. To prove that in this case at least one of the stable manifolds $S_1,\dots, S_{k+p}$ has dimension one, it suffices to show that at least one of the attracting directions of $(\wt F,\wt \G)$ is a saddle direction, by Theorem~\ref{the:saddle}. If $|\mu|>1$ or $|\mu|=1$ and $p=0$, every attracting direction is a saddle direction, so every $S_j$ has dimension one. Assume that $|\mu|=1$ and $p\ge1$, and let $\log\mu+x^k\left(A_0+A_1x+\dots+A_px^p\right)$ be the infinitesimal principal part of $(\wt F,\wt\G)$. Notice that $A_0\neq 0$.
We denote by $a$ the number of attracting directions $\xi \R^+$ such that
$\Real (\xi^{k} A_0) >0$. The number $a$ is a lower bound for the number of saddle directions, and is equal to
$\sharp \{ 0 \leq j < r : \Real (  e^{\frac{2 \pi i j k}{r}} A_0) >0 \}$.
We denote $g= \gcd (r, k)$, $r' = r/g$ and $k' = k/g$. Notice that $r'\geq 2$ since $p\geq 1$. Also, since $r'$ and $k'$ are coprime, $\eta$ is a root of unity of order $r'$ if and only if so is $\eta^{k'}$. Hence, we obtain
\[ a = g \, \sharp \{ 0 \leq j < r' : \Real ( e^{\frac{2 \pi i j k'}{r'}} A_0) >0 \}
=g \, \sharp \{ 0 \leq j < r' : \Real ( e^{\frac{2 \pi i j }{r'}} A_0) >0 \}. \]
Suppose $r' \neq 2$. There are at least $\lceil r'/4\rceil$ roots of unity $\xi$ of order $r'$
such that $ \Real ( \xi A_0) >0$.
Hence we obtain $a \geq g r'/4 = r/4$.

Suppose $r' =2$. This case happens if and only if $k=p$.
Hence either $\Real (A_0)\neq0$ (and then there are $k$ one-dimensional stable manifolds and $k$ two-dimensional ones) or $\Real(\xi^k A_0)=0$ for any attracting direction $\xi\R^+$. In this last case, if $A_j=0$ for all $1\le j\le p-1$ then every attracting direction is a saddle direction, so every $S_j$ has dimension one. Otherwise, we consider the first index red $t$, with $1\le t \le p-1$, such that $A_t \neq0$.
Analogously as above there are at least
$\sharp \{ 0 \leq j < r : \Real (  e^{\frac{2 \pi i j (k+t)}{r}} A_{t}) >0 \}$ saddle directions.
We denote $g' = \gcd (r,k+t) = \gcd (2k, k+t)$.
Since $r /g' > 2k/k =2$ we can apply the argument in the previous paragraph to show that
there are at least $g' (r/g')/4 = r/4$ saddle directions.
\end{proof}


\begin{thebibliography}{99}
\bibitem{Aba} \obra{Abate, M.}{The residual index and the dynamics of holomorphic maps tangent to the identity}{Duke Math. J. 107 (2001), no. 1, 173--207}
%
\bibitem{Aba-B-T} \obra{Abate, M; Bracci, F.; Tovena, F.}{Index theorems for holomorphic self-maps}{Ann. of Math. (2) 159 (2004), 819--864}
%
\bibitem{Aba-T} \obra{Abate, M; Tovena, F.}{Parabolic curves in $\C^3$}{Abstr. Appl. Anal.  2003,  no. 5, 275--294}
    %
\bibitem{Bri-B}\obra{Briot, C.A.;  Bouquet, J.C.} {Propri\'et\'es des fonctions
d\'efinies par des \'equations diff\'erentielles}{J. \'{E}cole Polytechnique XXI (1856)
133--198}

\bibitem{Bra-M} \obra{Bracci, F.; Molino, L.}{The dynamics near quasi-parabolic fixed points of holomorphic diffeomorphisms in $\C^2$}{Amer. J. Math. 126 (2004), no. 3, 671--686}

%
\bibitem{Bro-C-L} \obra{Brochero, F.E.; Cano, F.; L\'opez-Hernanz, L.}{Parabolic curves for diffeomorphisms in $\C^2$}{Publ. Mat. 52 (2008), no. 1, 189--194}
%

\bibitem{Eca} \obra{\'Ecalle, J.}{Les fonctions r\'{e}surgentes. Tome III. L'\'{e}quation du pont et la classification
analytique des objects locaux} {Publications Math\'{e}matiques
d'Orsay, 85-5. Universit\'{e} de Paris-Sud, D\'{e}partement de
Math\'{e}matiques, Orsay, 1985}

\bibitem {Fat} \obra{Fatou, P.}{Sur les \'{e}quations fonctionelles}{Bull. Soc. Math. France 47 (1919) 161--271}

\bibitem{Hak} \obra{Hakim, M.}{Analytic transformations of $(\C^p,0)$ tangent to the identity}{Duke Math. J. 92 (1998), no. 2, 403--428}
%
\bibitem{Ily-Y}\obra{Ilyashenko, Y.; Yakovenko, S.}{
Lectures on analytic differential equations}
{Graduate Studies in Mathematics, 86. American Mathematical Society, 2008}

\bibitem{Lea} \obra{Leau, L.}{\'{E}tude sur les \'{e}quations fonctionelles \`{a} une ou \`{a} plusieurs variables}{Ann. Fac. Sci. Toulouse Sci. Math. Sci. Phys. 11 (1897) 1--110}
%
\bibitem{Lop-S} \obra{L\'opez-Hernanz, L; Sanz S\'anchez, F.}{Parabolic curves of diffeomorphisms asymptotic to formal invariant curves}{J. f\"{u}r die reine und ang. Math. doi:10.1515/crelle-2015-0064}
%
%
\bibitem{Mol}\obra{Molino, L.}{The dynamics of maps tangent to the identity and with nonvanishing index}{Trans. Amer. Math. Soc. 361 (2009), no. 3, 1597--1623}
    %
\bibitem{Per} \obra{P\'{e}rez Marco, R.}{Sur une question de Dulac et Fatou}{C. R. Acad. Sci. Paris, 321, s. I (1995) 1045--1048}
    %
\bibitem{Rib} \obra{Rib\'on, J.}{Families of diffeomorphisms without periodic curves}{Michigan Math. J. 53 (2005), no. 2, 243--256}
%
\bibitem{Ron} \obra{Rong, F.}{New invariants and attracting domains for holomorphic maps in $\C^2$ tangent to the identity}{Publ. Mat. 59 (2015), no. 1, 235--243}
%
\bibitem{Rue} \obra{Ruelle, D.}{Elements of differentiable dynamics and bifurcation theory}{Academic Press, Inc., Boston, MA, 1989}

%
\bibitem{Ued1} \obra{Ueda, T.}{Local structure of analytic transformations of two complex variables, I}{J. Math. Kyoto Univ. 26-2 (1986), 233--261}
    %
\bibitem{Ued2} \obra{Ueda, T.}{Local structure of analytic transformations of two complex variables, II}{J. Math. Kyoto Univ. 31-3 (1991), 695--711}
    %
\bibitem{Viv}\obra{Vivas, L.}{Degenerate characteristic directions for maps tangent to the identity}{Indiana Univ. Math. J. 61 (2012), no. 6, 2019--2040}
 %
\bibitem{Wei}\obra{Weickert, B.}{Attracting basins for automorphisms of $\mathbb{C}^2$}{Invent. Math. 132 (1998), no. 3, 581--605}

\end{thebibliography}
\end{document}